\theoremstyle{plain}
\newtheorem{thm}{Theorem}[section]
\newtheorem*{thm*}{Theorem}
\newtheorem*{cor*}{Corollary}
\newtheorem{prop}[thm]{Proposition}
\newtheorem{proposition}[thm]{Proposition}
\newtheorem{lemma}[thm]{Lemma}
\newtheorem{corollary}[thm]{Corollary}
\newtheorem{claim}{Claim}
\newtheorem*{claim*}{Claim}
\theoremstyle{definition}
\newtheorem{definition}[thm]{Definition}
\newtheorem{ex}[thm]{Example}
\newtheorem{Example}[thm]{Example}
\newtheorem{remark}[thm]{Remark}
\newtheorem{setting}[thm]{Setting}
\theoremstyle{remark}
\numberwithin{equation}{thm}
\def\Ext{\operatorname{Ext}}
\def\Ker{\operatorname{Ker}}
\def\Hom{\operatorname{Hom}}
\def\Max{\operatorname{Max}}
\def\mod{\mathrm{mod}}
\def\rank{\mathrm{rank}}
\def\c{\mathfrak c}
\def\m{\mathfrak m}
\def\n{\mathfrak n}
\newcommand{\rma}{\mathrm{a}}
\newcommand{\rmc}{\mathrm{c}}
\newcommand{\rme}{\mathrm{e}}
\newcommand{\rmf}{\mathrm{f}}
\newcommand{\rmr}{\mathrm{r}}
\newcommand{\rmv}{\mathrm{v}}
\newcommand{\rmI}{\mathrm{I}}
\newcommand{\rmJ}{\mathrm{J}}
\newcommand{\rmK}{\mathrm{K}}
\newcommand{\rmQ}{\mathrm{Q}}
\newcommand{\calB}{\mathcal{B}}
\newcommand{\calR}{\mathcal{R}}
\newcommand{\calS}{\mathcal{S}}
\newcommand{\calT}{\mathcal{T}}
\newcommand{\fka}{\mathfrak{a}}
\newcommand{\fkc}{\mathfrak{c}}
\newcommand{\fkm}{\mathfrak{m}}
\newcommand{\fkn}{\mathfrak{n}}
\newcommand{\fkp}{\mathfrak{p}}
\newcommand{\fkq}{\mathfrak{q}}
\newcommand{\mapright}[1]{%
\smash{\mathop{%
\hbox to 1cm{\rightarrowfill}}\limits^{#1}}}
\newcommand{\mapleft}[1]{%
\smash{\mathop{%
\hbox to 1cm{\leftarrowfill}}\limits_{#1}}}
\def\AGL{\operatorname{AGL}}
\def\Ass{\operatorname{Ass}}
\title{Sally modules of canonical ideals in dimension one and $2$-AGL rings}
\author{Tran Do Minh Chau}
\address{Thai Nguyen University of Education, Khoa Toan, truong DHSP Thai Nguyen}
\email{trandominhchau@gmail.com}
\author{Shiro Goto}
\address{Department of Mathematics, School of Science and Technology, Meiji University, 1-1-1 Higashi-mita, Tama-ku, Kawasaki 214-8571, Japan}
\email{shirogoto@gmail.com}
\author{Shinya Kumashiro}
\address{Department of Mathematics and Informatics, Graduate School of Science and Technology, Chiba University, Chiba-shi 263, Japan}
\email{polar1412@gmail.com}
\author{Naoyuki Matsuoka}
\address{Department of Mathematics, School of Science and Technology, Meiji University, 1-1-1 Higashi-mita, Tama-ku, Kawasaki 214-8571, Japan}
\email{naomatsu@meiji.ac.jp}
\thanks{2010 {\em Mathematics Subject Classification.} 13H10, 13H15, 13A30.}
\thanks{{\em Key words and phrases.} Cohen-Macaulay ring, Gorenstein ring, almost Gorenstein ring, canonical ideal, parameter ideal, Rees algebra, Sally module} 
\thanks{The first author was partially supported by the International Research Supporting Program of Meiji University. The second author was partially supported by JSPS Grant-in-Aid for Scientific Research (C) 25400051. The fourth author was partially supported by JSPS Grant-in-Aid for Scientific Research 26400054. }
\begin{document}
\maketitle

\setlength{\baselineskip} {17pt}

\begin{abstract}
The notion of $2$-$\AGL$ ring in dimension one which is a natural generalization of almost Gorenstein local ring is posed in terms of the rank of Sally modules of canonical ideals. The basic theory is developed, investigating also the case where the rings considered are numerical semigroup rings over fields. Examples are explored. 
\end{abstract}

{\footnotesize \tableofcontents}

\section{Introduction}\label{section1}
The destination of this research is to find a good notion of Cohen-Macaulay local rings of positive dimension which naturally generalizes Gorenstein local rings. In dimension one, the research has started from the works of V. Barucci and R. Fr\"{o}berg \cite{BF} and the second, the fourth authors, and T. T. Phuong \cite{GMP}. In \cite{BF} Barucci and Fr\"{o}berg introduced the notion of almost Gorenstein ring in the case where the local rings are one-dimensional and analytically unramified. They explored also numerical semigroup rings over fields and developed a beautiful theory. In \cite{GMP} the authors extended the notion given by \cite{BF} to arbitrary one-dimensional Cohen-Macaulay local rings and showed that their new definition works well to analyze almost Gorenstein rings which are analytically ramified. In \cite{GTT} the second author, R. Takahashi, and N. Taniguchi gave the notion of almost Gorenstein local/graded rings of higher dimension. Their research is still in progress, exploring, for example, the problem of when the Rees algebras of ideals/modules are almost Gorenstein rings; see \cite{GMTY1, GMTY2, GMTY3, GMTY4, GRTT, T}. One can consult \cite{El} for a deep investigation of canonical ideals in dimension one.

The interests of the present research are a little different from theirs and has been strongly inspired by \cite[Section 4]{GGHV} and \cite{V2}. Our aim is to discover a good candidate for natural generalization of almost Gorenstein rings. Even though our results are at this moment restricted within the case of dimension one, we expect that a higher dimensional notion might be possible after suitable modifications. However, before entering more precise discussions, let us fix our terminology.

Throughout this paper let $(R,\m)$ be a Cohen-Macaulay local ring of dimension one and let $I$ be a canonical ideal of $R$. Assume that $I$ contains a parameter ideal $Q = (a)$ of $R$ as a reduction. We set $K = \frac{I}{a} = \{\frac{x}{a} \mid x \in I\}$ in the total ring $\rmQ(R)$ of fractions of $R$ and let $S =R[K]$. Therefore, $K$ is a fractional ideal of $R$ such that $R \subseteq K \subseteq \overline{R}$ and $S$ is a module-finite extension of $R$, where $\overline{R}$ denotes the integral closure of $R$ in $\rmQ(R)$. We denote by $\fkc = R:S$ the conductor. With this notation the second and the fourth authors and T. T. Phuong  \cite{GMP} closely studied the almost Gorenstein property of $R$. Here let us recall the definition of almost Gorenstein local rings given by \cite{GTT}, which works with a suitable modification in higher dimensional cases also. Notice that in our setting, the condition in Definition \ref{1.1} below is equivalent to saying that $\fkm K \subseteq R$ (\cite[Proposition 3.4]{GTT}).

\begin{definition}[{\cite[Definition 1.1]{GTT}}]\label{1.1}
Suppose that $R$ possesses the canonical module $\rmK_R$. Then we say that $R$ is an almost Gorenstein local ({\it AGL} for short) ring, if there is an exact sequence
$$0 \to R \to \rmK_R \to C \to 0$$
of $R$-modules such that $\fkm C = (0)$.
\end{definition} 

\noindent
Consequently, $R$ is an AGL ring if $R$ is a Gorenstein ring (take $C=(0)$) and Definition \ref{1.1} certifies that once $R$ is an AGL ring, although it is not a Gorenstein ring, $R$ can be embedded into its canonical module $\rmK_R$ and the difference $\rmK_R/R$ is little.

Let $\rme_i(I)$~($i = 0,1$) denote the Hilbert coefficients of $R$ with respect to $I$ (notice that our canonical ideal $I$ is an $\fkm$-primary ideal of $R$) and let $\rmr(R) = \ell_R(\Ext_R^1(R/\fkm,R))$ denote the Cohen-Macaulay type of $R$. With this notation the following characterization of AGL rings is given by \cite{GMP}, which was a starting point of  the present research.

\begin{thm}[{\cite[Theorem 3.16]{GMP}}]\label{1.2} The following conditions are equivalent.
\begin{enumerate}[{\rm (1)}]
\item $R$ is an $\AGL$ ring but not a Gorenstein ring.
\item $\rme_1(I) = \rmr(R)$.
\item $\rme_1(I) = \rme_0(I) - \ell_R(R/I) + 1$.
\item $\ell_R(S/K) = 1$, that is $S = K :\fkm$.
\item $\ell_R(I^2/QI) = 1$.
\item $S = \fkm : \fkm$ but $R$ is not a $\mathrm{DVR}$. 
\end{enumerate}
When this is the case, $I^3=QI^2$ and 
$$\ell_R(R/I^{n+1}) = \left(\rmr(R) +\ell_R(R/I) -1\right)\binom{n+1}{1}-\rmr(R)$$
for all $n \ge 1$, where $\ell_R(*)$ denotes the length.
\end{thm}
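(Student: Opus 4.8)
The plan is to pass from $I$ to the canonical fractional ideal $K=\tfrac{I}{a}$ and the birational extension $S=R[K]$, and to translate each of the six conditions into a statement about the chain $R\subseteq K\subseteq K^2\subseteq\cdots\subseteq S$ of $R$-submodules of $\rmQ(R)$.

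First I would assemble the dictionary. Since $I^{n+1}=a^{n+1}K^{n+1}$ with $a$ a nonzerodivisor, $\ell_R(R/I^{n+1})=(n+1)\rme_0(I)-\ell_R(K^{n+1}/R)$, and because $S=K^n$ and $\ell_R(S/R)<\infty$ for $n\gg0$ this yields $\rme_1(I)=\ell_R(S/R)$ and $\ell_R(R/I^{n+1})=\rme_0(I)(n+1)-\rme_1(I)$ eventually. Likewise $\ell_R(K/R)=\ell_R(I/Q)=\rme_0(I)-\ell_R(R/I)$, so $\rme_1(I)=\ell_R(S/K)+\rme_0(I)-\ell_R(R/I)$, which is already $(3)\Leftrightarrow(4)$; moreover the telescoping identity $\rme_1(I)=\ell_R(K/R)+\sum_{n\ge1}\ell_R(K^{n+1}/K^n)$ (together with $\ell_R(I^{n+1}/QI^n)=\ell_R(K^{n+1}/K^n)$) shows $\rme_1(I)>\rme_0(I)-\ell_R(R/I)$ unless $K^2=K$. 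I would then record the properties of the canonical fractional ideal that carry the argument: $\End_R(K)=R$; $K$ is a subring of $\rmQ(R)$ if and only if $K=R$, i.e. $R$ is Gorenstein (since a subring $K$ satisfies $K\subseteq\End_R(K)=R$); $\ell_R((K:J)/K)=\ell_R(R/J)$ for every $\fkm$-primary $J$, in particular $\ell_R((K:\fkm)/K)=1$; $\rmr(R)=\mu_R(K)=\ell_R((R:\fkm)/R)$; the sequence $0\to R\to K\to K/R\to0$ gives $\mu_R(K)\le\ell_R(K/R)+1$, with equality exactly when $\fkm K\subseteq R$ and $\fkm K\ne R$; and, when $R$ is not a $\mathrm{DVR}$, $\fkm\cdot(R:\fkm)=\fkm$ and $R:\fkm=\fkm:\fkm$ is a ring (all colons taken in $\rmQ(R)$).

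Now I would close the logical cycle. For $(5)\Rightarrow(1)$: $\ell_R(I^2/QI)=\ell_R(K^2/K)=1$ forces $\fkm K^2\subseteq K$; since $K:(\fkm K)=(K:K):\fkm=R:\fkm$, the relation $\fkm K^2\subseteq K$ is precisely $K\subseteq R:\fkm$, i.e. $\fkm K\subseteq R$, so $R$ is $\AGL$, and $K\ne R$ because $K^2\ne K$. For $(2)\Rightarrow(1)$: $\rme_1(I)=\rmr(R)\ge1$ forces $R$ non-Gorenstein, so by the telescoping identity $\rmr(R)=\rme_1(I)\ge\ell_R(K/R)+1\ge\mu_R(K)=\rmr(R)$; equality throughout gives $\mu_R(K)=\ell_R(K/R)+1$, hence $\fkm K\subseteq R$. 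For $(1)\Rightarrow(2),(3),(4),(6)$: non-Gorenstein gives non-$\mathrm{DVR}$, so $R:\fkm=\fkm:\fkm$ is a ring and $\fkm\cdot(R:\fkm)=\fkm$; from $\fkm K\subseteq R$ we get $\fkm K\ne R$ (else $R=\fkm K\subseteq\fkm\cdot(R:\fkm)=\fkm$) and $K\subseteq\fkm:\fkm$, hence $S=R[K]\subseteq\fkm:\fkm$ and $\rme_1(I)=\ell_R(S/R)\le\rmr(R)$; also $\rmr(R)=\ell_R(K/R)+1$, so $\ell_R((R:\fkm)/K)=1$; finally $K$ is not a ring, so $K\subsetneq K^2\subseteq S$ and $\ell_R(S/K)\ge1$, which with $K\subseteq S\subseteq R:\fkm$ forces $S=R:\fkm=\fkm:\fkm$ and $\ell_R(S/K)=1$, whence $\rme_1(I)=\ell_R(K/R)+1=\rmr(R)$ and $(3),(6)$ follow. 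The remaining steps are short: $(3)\Leftrightarrow(4)$ is the length identity; $(4)\Rightarrow(5)$ holds since $\ell_R(S/K)=1$ gives $\fkm S\subseteq K$ and, $K$ not being a ring, $K\subsetneq K^2\subseteq S$ forces $K^2=S$; and $(6)\Rightarrow(1)$ since $K\subseteq S=\fkm:\fkm=R:\fkm$ gives $\AGL$ while non-$\mathrm{DVR}$ excludes Gorenstein.

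For the final assertions, once $(4)$ holds we have $K^2=S$, hence $K^{n+1}=K^{n-1}K^2=K^{n-1}S=S$ for all $n\ge1$ and therefore $I^{n+1}=a^{n+1}S$; in particular $I^3=a^3S=QI^2$. Substituting $\ell_R(K^{n+1}/R)=\ell_R(S/R)=\rme_1(I)=\rmr(R)$ into $\ell_R(R/I^{n+1})=(n+1)\rme_0(I)-\ell_R(K^{n+1}/R)$ and using $\rme_0(I)=\ell_R(R/I)+\ell_R(K/R)=\ell_R(R/I)+\rmr(R)-1$ gives the displayed formula for every $n\ge1$. I expect the crux to be $(5)\Rightarrow(1)$: the point is to recognize that the numerical hypothesis $\ell_R(I^2/QI)=1$ is, via $\End_R(K)=R$, exactly the inclusion $K\subseteq R:\fkm$ defining the $\AGL$ property; keeping track of the hypothesis ``$R$ is not a $\mathrm{DVR}$'' so as to identify $R:\fkm$ with the ring $\fkm:\fkm$ is the other delicate point.
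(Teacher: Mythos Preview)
The paper does not actually prove Theorem~\ref{1.2}; it is quoted verbatim from \cite[Theorem~3.16]{GMP} and used as the starting point for the present work. The paper does, however, scatter several of the ingredients you assemble (the identities $\rme_1(I)=\ell_R(S/R)$ and $\ell_R(S/K)=\rme_1(I)-[\rme_0(I)-\ell_R(R/I)]$ appear in Theorem~\ref{2.3}, and the equivalence of the $\AGL$ condition with $\fkm K^2\subseteq K$ is Proposition~\ref{2.4}(4)), so your argument is very much in the spirit of what \cite{GMP} does.

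Your proof is correct and efficiently organized. The passage to the fractional ideal $K$ and the birational ring $S=R[K]$, together with the two key facts $K:K=R$ and $\ell_R((K:\fkm)/K)=1$, is exactly the right framework; the telescoping identity $\rme_1(I)=\ell_R(K/R)+\sum_{n\ge 1}\ell_R(K^{n+1}/K^n)$ and the inequality $\mu_R(K)\le\ell_R(K/R)+1$ (with its equality characterization) do all the work cleanly. The implication $(5)\Rightarrow(1)$ via $K:(\fkm K)=(K:K):\fkm=R:\fkm$ is the same colon manipulation that underlies Proposition~\ref{2.4}(4) in the paper.

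One phrase to tighten: in $(6)\Rightarrow(1)$ you write ``non-$\mathrm{DVR}$ excludes Gorenstein,'' which is not literally true (e.g.\ $k[[t^2,t^3]]$). What you mean, and what your setup already gives, is this: if $R$ were Gorenstein then $K=R$ and hence $S=R$, but $(6)$ together with $R$ not a $\mathrm{DVR}$ forces $S=\fkm:\fkm=R:\fkm\supsetneq R$ (since $\ell_R((R:\fkm)/R)=\rmr(R)\ge 1$), a contradiction. With that clarification the argument is complete.
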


The aim of the present research is to ask for a generalization of AGL rings in dimension one. For the purpose we notice that Condition (3) in Theorem \ref{1.2}  is equivalent to saying that the Sally module  of $I$ with respect to $Q$ has rank  one. In order to discuss more explicitly, here let us explain  the notion of Sally module (\cite{V1}). The results we recall below  hold true in Cohen-Macaulay local rings $(R,\m)$ of arbitrary positive dimension for all $\fkm$-primary ideals $I$ and reductions $Q$ of $I$ which are parameter ideals of $R$ (\cite{GNO}). Let us, however, restrict our attention to the case where $\dim R = 1$ and $I$ is a canonical ideal of $R$.

Let $\calT= \calR(Q)=R[Qt]$ and $\calR = \calR(I)=R[It]$ respectively denote the Rees algebras of $Q$ and $I$, where  $t$  is an indeterminate over $R$. We set $$\calS_Q(I) = I\calR/I\calT$$ and call it the Sally module of $I$ with respect to $Q$ (\cite{V1}). Then $\calS_Q(I)$ is a finitely generated graded $\calT$-module with $\dim_{\calT}\calS_Q(I) \le 1$  whose grading is given by
$$
[\calS_Q(I)]_n  =  \begin{cases}
(0)& ~if ~n \le 0, \\
I^{n+1}/Q^nI  & ~if~n \ge 1
\end{cases}
$$
for each $n \in \Bbb Z$ (\cite[Lemma 2.1]{GNO}).  
Let $\fkp = \fkm \calT$ and $\calB = \calT/\fkp$ ~($=(R/\fkm)[T]$ the polynomial ring). We set $$\rank ~~\calS_Q(I) = \ell_{\calT_\fkp}([\calS_Q(I)]_\fkp)$$ and call it the rank of $\calS_Q(I)$. Then $\Ass_{\calT}\calS_Q(I) \subseteq \{\fkp\}$ and 
$$\rank~~ \calS_Q(I)  = \rme_1(I)-\left[\rme_0(I) -\ell_R(R/I)\right]$$
(\cite[Proposition 2.2]{GNO}). As we later confirm  in Section 2 (Theorem \ref{2.3}), the invariant $\rank~\calS_Q(I)$ is equal to $\ell_R(S/K)$ and is independent of the choice of canonical ideals $I$ and their reductions $Q$. By \cite{S3, V1} it is known that Condition (3) in Theorem \ref{1.2} is equivalent to saying that $\rank~~\calS_Q(I)=1$, which is also equivalent to saying that $$\calS_Q(I) \cong \calB(-1)$$ as a graded $\calT$-module. According to these stimulating facts, as is suggested by \cite[Section 4]{GGHV} it seems reasonable to expect that one-dimensional Cohen-Macaulay local rings $R$ which satisfy the condition
$$\rank~~\calS_Q(I)= 2, \  ~\text{that~is}~\ \rme_1(I) = \rme_0(I) - \ell_R(R/I) + 2$$
for canonical ideals $I$ could be a good candidate for generalization of AGL rings.

Chasing the expectation, we now give the following.

\begin{definition}\label{1.3} We say that $R$ is a $2$-almost Gorenstein local ($2$-{\it AGL} for short) ring, if $\rank~~ \calS_Q(I) = 2$.

\end{definition}

In this paper we shall closely explore the structure of $2$-$\AGL$ rings to show  the above expectation comes true. Let us note here the basic characterization of $2$-$\AGL$ rings, which starts the present paper.

\begin{thm}\label{1.4} The following conditions are equivalent.
\begin{enumerate}[{\rm (1)}]
\item $R$ is a $2$-$\AGL$ ring. 
\item There is an exact sequence
$0 \to \calB(-1) \to \calS_Q(I) \to \calB(-1) \to 0$
of graded $\calT$-modules.
\item $K^2 = K^3$ and $\ell_R(K^2/K) = 2$.
\item $I^3=QI^2$ and $\ell_R(I^2/QI) = 2$.
\item $R$ is not a Gorenstein ring but $\ell_R(S/[K:\fkm])=1$.
\item $\ell_R(S/K)=2$.
\item $\ell_R(R/\fkc) = 2$.
\end{enumerate}
When this is the case, $\fkm{\cdot}\calS_Q(I) \ne (0)$, whence  the exact sequence given by Condition $(2)$ is not split, and we have 
$$\ell_R(R/I^{n+1}) = \rme_0(I)\binom{n+1}{1}- \left(\rme_0(I) - \ell_R(R/I) +2\right)$$
for all $n \ge 1$.
\end{thm}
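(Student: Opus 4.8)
The plan is to prove the cycle of equivalences $(1)\Leftrightarrow(6)$, $(6)\Leftrightarrow(7)$, $(6)\Leftrightarrow(5)$, $(1)\Leftrightarrow(2)$, and $(3)\Leftrightarrow(4)\Leftrightarrow(1)$, and then derive the final "When this is the case" assertions. The backbone is Theorem~\ref{2.3}, which identifies $\rank\,\calS_Q(I)=\ell_R(S/K)$; this gives $(1)\Leftrightarrow(6)$ immediately, and it is independent of the choice of $I$ and $Q$, so every subsequent statement may be checked with a convenient choice. For $(6)\Leftrightarrow(7)$, the idea is to use that $\fkc = R:S = K:S$ (the second equality because $K$ is a canonical ideal, so $R:_RS = K:_KS$ up to the standard identifications), combined with the duality $\ell_R(S/K) = \ell_R((K:\fkc)/(S:\fkc))$-type computations coming from the fact that $K$ is a canonical module of $R$ and Matlis duality over the Artinian ring $R/\fkc$; concretely, $\ell_R(R/\fkc)=\ell_R(S/K)$ because $K/R$ and $S/K$ are Matlis dual to each other as $R/\fkc$-modules (this is already implicit in \cite{GMP}). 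For $(5)\Leftrightarrow(6)$: since $R$ is Gorenstein iff $K=R$ iff $\ell_R(S/K)=0$, the non-Gorenstein hypothesis says $\ell_R(S/K)\ge 1$; then one shows $K:\fkm$ is squeezed between $K$ and $S$ with $\ell_R((K:\fkm)/K)=\rmr(R)-1$ handled via the canonical-module structure, and $\ell_R(S/(K:\fkm))=1$ forces $\ell_R(S/K)=2$ exactly; this is the part that needs the most care because one must control $\ell_R((K:\fkm)/K)$, and here the fact that $K$ is the canonical ideal is essential — $\Hom_R(R/\fkm,K/R)$ has length $\rmr(R)-1$.

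For $(1)\Leftrightarrow(2)$, recall from the introduction that $\calS_Q(I)$ is a finitely generated graded $\calT$-module with $\Ass_{\calT}\calS_Q(I)\subseteq\{\fkp\}$, $\dim_{\calT}\calS_Q(I)\le 1$, and $[\calS_Q(I)]_0=(0)$; since $\rank\,\calS_Q(I)=2$, the module $\calS_Q(I)$ is a $\calB$-module (being $\fkp$-torsion-free and $\fkp\calT$-coprimary of multiplicity $2$ over the polynomial ring $\calB=(R/\fkm)[T]$), hence has a two-step filtration with graded pieces of the form $\calB(-a_i)$; the grading $[\calS_Q(I)]_n=I^{n+1}/Q^nI$ vanishing for $n\le 0$ and the minimal generation (which lives in degree $1$, since $K^2=K^3$ will be shown) pins both shifts to $-1$, giving the exact sequence $0\to\calB(-1)\to\calS_Q(I)\to\calB(-1)\to 0$. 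Conversely such a sequence forces $\rank\,\calS_Q(I)=2$ by additivity of rank. For $(3)\Leftrightarrow(4)$, pass between $K$ and $I$ via $K=I/a$: one has $K^n = I^n/a^n$ and $\ell_R(K^2/K)=\ell_R(I^2/aI)$, and $K^2=K^3 \Leftrightarrow I^3=aI^2=QI^2$ (using $Q=(a)$ and that $Q$ is a reduction, so $I^{n+1}=QI^n$ for $n\gg0$ and the stability degree descends once $I^3=QI^2$). For $(4)\Leftrightarrow(1)$: $\rank\,\calS_Q(I)=\sum_{n\ge1}\ell_R([\calS_Q(I)]_n)$ is \emph{not} literally true, but $\rank\,\calS_Q(I)=\ell_{\calT_\fkp}([\calS_Q(I)]_\fkp)$ together with $\Ass\subseteq\{\fkp\}$ and $\dim\le1$ gives, once $\calS_Q(I)$ is $\calB$-free modulo nothing, that $\rank=2$ is equivalent to $\ell_R([\calS_Q(I)]_1)=2$ and $[\calS_Q(I)]_n=[\calS_Q(I)]_1^{(n)}$ stabilizing, i.e. $\ell_R(I^2/QI)=2$ and $I^3=QI^2$; one direction uses $S3$/$V1$-type arguments as cited.

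Finally, for the closing assertions: $\fkm\cdot\calS_Q(I)\ne(0)$ follows because if $\fkm\calS_Q(I)=(0)$ then $\calS_Q(I)$ would be a $\calB$-module annihilated by $\fkm$, forcing $\calS_Q(I)\cong\calB(-1)^{\oplus 2}$ and in particular $S = K:\fkm$ with $\ell_R((K:\fkm)/K) = 2$; but one checks via the canonical structure that $\ell_R(S/K)=2$ together with $S=K:\fkm$ is incompatible unless $R$ is Gorenstein (indeed it would force $\fkm S\subseteq K$, contradicting the minimality forced by $\ell_R(R/\fkc)=2$) — so the sequence in $(2)$ is non-split. The Hilbert function formula then comes from the exact sequences $0\to I^{n+1}/Q^nI\to R/Q^nI\to R/I^{n+1}\to 0$ and the known length $\ell_R(R/Q^nI)=\ell_R(R/I)\binom{n}{1}+\ell_R(R/Q)\cdot(\text{binomials})$, combined with $\ell_R([\calS_Q(I)]_n)=2$ for all $n\ge1$ (which holds because $\calS_Q(I)$, being $\calB(-1)$-by-$\calB(-1)$, has Hilbert function constantly $2$ in positive degrees); substituting $\rme_0(I)=\ell_R(R/Q)$ and $\rme_1(I)=\rme_0(I)-\ell_R(R/I)+2$ yields the stated closed form. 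The main obstacle I anticipate is the bookkeeping in $(5)\Leftrightarrow(6)$ and the non-split assertion: both require precise control of $\ell_R((K:\fkm)/K)$ and of how $\fkm$ acts on the Sally module, and this is exactly where the hypothesis that $I$ is a \emph{canonical} ideal (not just any $\fkm$-primary ideal) does the real work.
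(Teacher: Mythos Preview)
Your overall architecture matches the paper's, and the equivalences $(1)\Leftrightarrow(6)\Leftrightarrow(7)$, $(3)\Leftrightarrow(4)$, and the Hilbert function computation are essentially correct. However there are two genuine errors.

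First, in $(5)\Leftrightarrow(6)$ you assert $\ell_R((K:\fkm)/K)=\rmr(R)-1$. This is wrong: because $K$ is a canonical module, $(K:\fkm)/K\cong\Ext^1_R(R/\fkm,K)\cong R/\fkm$, so $\ell_R((K:\fkm)/K)=1$ (this is \cite[Satz~3.3~c)]{HK}, used throughout the paper). You have confused this quantity with the socle of $K/R$, which is a different object. With the correct value the argument is immediate: if $R$ is not Gorenstein then $K:\fkm\subseteq K^2\subseteq S$ by Corollary~\ref{2.5}(1), and $\ell_R(S/K)=\ell_R(S/(K:\fkm))+1$, giving $(5)\Leftrightarrow(6)$ directly.

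Second, your construction of the exact sequence in $(2)$ claims that $\calS_Q(I)$ ``is a $\calB$-module''. This is false: $\Ass_{\calT}\calS_Q(I)\subseteq\{\fkp\}$ does not force $\fkp\cdot\calS_Q(I)=(0)$, and indeed the theorem's final assertion is precisely that $\fkm\cdot\calS_Q(I)\ne(0)$, so $\calS_Q(I)$ is \emph{never} a $\calB$-module in the $2$-AGL case. Nor does rank $2$ alone yield a two-step filtration with quotients $\calB(-a_i)$. The paper's construction (Theorem~\ref{2.6}) is concrete: from $\ell_R(I^2/QI)=2$ and $\fkm I^2\not\subseteq QI$ (else $R$ would be AGL by Proposition~\ref{2.4}(4)), write $I^2=QI+(f)$ and $\fkm I^2+QI=QI+(\alpha f)$ with $\alpha\in\fkm$; then $g=\alpha f$ satisfies $(0):_{\calT}\overline{gt}=\fkm\calT$, giving the submodule $\calB(-1)$, and since $I^3=QI^2$ forces $\calS_Q(I)=\calT\cdot\overline{ft}$, the quotient is cyclic with annihilator $\fkm\calT$, hence also $\calB(-1)$. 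The non-splitting is then immediate from $\fkm\cdot(I^2/QI)\ne(0)$, without the indirect argument you sketch.
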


See \cite{HHS} for another direction of generalization of Gorenstein rings. In \cite{HHS} the authors posed the notion of nearly Gorenstein ring and developed the theory. Here let us note that in dimension one, $2$-$\AGL$ rings are not nearly Gorenstein and nearly Gorenstein rings are not $2$-$\AGL$ rings (see \cite [Remark 6.2, Theorem 7.4] {HHS}, Theorems \ref {1.4}, \ref{2.6}).

Here let us explain how this paper is organized. In Section 2 we will summarize some preliminaries, which we need throughout this paper. The proof of Theorem \ref{1.4} shall be given in Section 3. In Section 3 we study also the question how the $2$-$\AGL$ property of rings is preserved under flat base changes. Condition (7) in Theorem \ref{1.4} is really practical, which we shall show in Sections 5 and 6. In Section 4 we study $2$-$\AGL$ rings obtained by idealization. We will show that $A=R \ltimes \fkc$ is a $2$-$\AGL$ ring if and only if so is $R$, which enables us, starting from a single $2$-$\AGL$ ring, to produce an infinite family $\{A_n\}_{n \ge 0}$ of $2$-$\AGL$ rings which are analytically ramified (Example \ref{ex4}). Let $\rmv(R)$ (resp. $\rme(R))$ denote the embedding dimension  of $R$ (resp. the multiplicity $\rme_\m^0(R)$ of $R$ with respect to $\fkm$). We set $B = \fkm :\fkm$. Then it is known by \cite[Theorem 5.1]{GMP} that $R$ is an AGL ring with $\rmv(R) = \rme(R)$ if and only if $B$ is a Gorenstein ring. In Section 5 we shall closely study the corresponding phenomenon of the $2$-$\AGL$ property. We will show that if $R$ is a $2$-$\AGL$ ring with $\rmv(R) = \rme(R)$, then $B$ contains a unique maximal ideal $M$ such that $B_N$ is a Gorenstein ring for all $N \in \Max B \setminus \{M\}$ and $B_M$ is an $\AGL$ ring which is not a Gorenstein ring. The converse is also true under suitable conditions, including the specific one that $K/R$ is a free $R/\fkc$-module. Section 6 is devoted to the analysis of the case where $R = k[[H]]$ ($k$ a field) are the semigroup rings of numerical semigroups $H$. We will give in several cases a characterization for $R = k[[H]]$ to be $2$-$\AGL$ rings in terms of numerical semigroups $H$.  
  

\section{Preliminaries}\label{section2}
The purpose of this section is to summarize some auxiliary results, which we later need throughout this paper. First of all, let us make sure of our setting.

\begin{setting}\label{2.1}
Let $(R,\m)$ be a Cohen-Macaulay local ring with $\dim R=1$, possessing the canonical module $\rmK_R$. Let $I$ be a canonical ideal of $R$. Hence $I$ is an ideal of $R$ such that $I \ne R$ and $I \cong \rmK_R$ as an $R$-module. We  assume that $I$ contains a parameter ideal $Q=(a)$ of $R$ as a reduction. Let   $$K = \frac{I}{a} = \left\{\frac{x}{a} \mid x \in I\right\}$$ in the total ring $\rmQ(R)$ of fractions of $R$. Hence $K$ is a fractional ideal of $R$ such that $R \subseteq K \subseteq \overline{R}$, where $\overline{R}$ denotes the integral closure of $R$ in $\rmQ(R)$. Let $S =R[K]$ and $\fkc = R : S$. We denote by $\calS_Q(I) = I\calR/I\calT$ the Sally module of $I$ with respect to $Q$, where $\calT = R[Qt]$, $\calR = R[It]$, and $t$ is an indeterminate over $R$. Let $\calB = \calT/\fkm \calT$ and $\rme_i(I)~ (i = 0,1) $ the Hilbert coefficients of $I$. 
\end{setting}

We notice that a one-dimensional Cohen-Macaulay local ring $(R,\fkm)$ contains a canonical ideal if and only if $\rmQ(\widehat{R})$ is a Gorenstein ring, where $\widehat{R}$ denotes the $\fkm$-adic completion of $R$ (\cite[Satz 6.21]{HK}). Also, every $\fkm$-primary ideal of $R$ contains a parameter ideal as a reduction, once the residue class field $R/\fkm$ of $R$ is infinite. If $K$ is a given  fractional ideal of $R$ such that $R \subseteq K \subseteq \overline{R}$ and $K \cong \rmK_R$ as an $R$-module, then taking a non-zerodivisor $a \in \m$ so that $a K \subsetneq R$, $I = aK$ is a canonical ideal of $R$ such that  $Q=(a)$ as a reduction and $K = \frac{I}{a}$. Therefore, the existence of canonical ideals $I$ of $R$ containing parameter ideals as reductions is equivalent to saying that there are fractional ideals $K$ of $R$ such that $R \subseteq K \subseteq \overline{R}$ and $K \cong \rmK_R$ as an $R$-module (cf. \cite[Remark 2.10]{GMP}). We have for all $n \ge 0$ $$K^{n+1}/K^n \cong I^{n+1}/QI^n$$ as an $R$-module, whence $K/R \cong I/Q$.  Let $\rmr_Q(I) = \min\{n \ge 0 \mid I^{n+1} = QI^n\}$ be the reduction number of $I$ with respect to $Q$.

Let us begin with the following.

\begin{lemma}\label{2.2} The following assertions hold true.
\begin{enumerate}[{\rm (1)}]
\item $\rmr_Q(I) = \min\{n \ge 0 \mid K^n = K^{n+1}\}$. Hence $S = K^n$ for all $n \ge \rmr_Q(I)$.
\item Let $b \in I$. Then $(b)$ is a reduction of $I$ if and only if $\frac{b}{a}$ is an invertible element of $S$. When this is the case, $S = R[\frac{I}{b}]$ and $\rmr_{Q}(I) = \rmr_{(b)}(I)$. 
\end{enumerate}
\end{lemma}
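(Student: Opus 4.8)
The plan is to reduce everything to the single observation that multiplication by powers of $a$ (and later by $\frac{b}{a}$) are isomorphisms inside $\rmQ(R)$, which lets me translate statements about powers of the ideal $I$ into statements about powers of the fractional ideal $K = \frac{I}{a}$. First I would record the identity $K^n = I^n/a^n$ for all $n \ge 0$, obtained by induction from $K = \frac{I}{a}$ together with $I = aK$; since $R \subseteq K$, this exhibits $\{K^n\}_{n \ge 0}$ as an ascending chain of fractional ideals with $R[K] = \bigcup_{n \ge 0} K^n$. For part (1), multiplying the equality $K^n = K^{n+1}$ through by $a^{n+1}$ gives $I^{n+1} = aI^n = QI^n$, and conversely; hence $\{n \mid K^n = K^{n+1}\}$ and $\{n \mid I^{n+1} = QI^n\}$ coincide, which yields $\rmr_Q(I) = \min\{n \ge 0 \mid K^n = K^{n+1}\}$. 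Once the chain stabilizes at $n = \rmr_Q(I)$ it is constant thereafter, so $S = R[K] = K^n$ for every $n \ge \rmr_Q(I)$.

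For part (2) I would set $u = \frac{b}{a} \in K \subseteq S$ and use the same translation: since $I^n = a^n K^n$ and $bI^n = a^{n+1}uK^n$, the reduction equality $I^{n+1} = bI^n$ is equivalent to $K^{n+1} = uK^n$. If $u$ is invertible in $S$, then for $n \ge \rmr_Q(I)$ part (1) gives $K^{n+1} = K^n = S$ and $uK^n = uS = S$, so $(b)$ is a reduction of $I$. Conversely, if $(b)$ is a reduction then $K^{n+1} = uK^n$ holds for all large $n$; taking $n \ge \rmr_Q(I)$ large gives $S = K^{n+1} = uK^n = uS$, and $1 \in uS$ exhibits an inverse of $u$ in $S$. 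This establishes the first equivalence of part (2).

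Finally, to identify $S = R[\frac{I}{b}]$ and prove $\rmr_Q(I) = \rmr_{(b)}(I)$ I would exploit that $\frac{I}{b} = u^{-1}K$ and that $u^{-1} \in S$ is now known to be a unit. Then $(\frac{I}{b})^n = u^{-n}K^n$, and for $n \ge \rmr_Q(I)$ this equals $u^{-n}S = S$; since $R \subseteq \frac{I}{b}$ the powers again form an ascending chain, so $R[\frac{I}{b}] = \bigcup_n (\frac{I}{b})^n = S$. For the reduction numbers I would apply part (1) to the reduction $(b)$: it gives $\rmr_{(b)}(I) = \min\{n \mid (\frac{I}{b})^n = (\frac{I}{b})^{n+1}\}$ and $S = (\frac{I}{b})^{\rmr_{(b)}(I)} = u^{-\rmr_{(b)}(I)}K^{\rmr_{(b)}(I)}$, so multiplying by the unit $u^{\rmr_{(b)}(I)}$ yields $K^{\rmr_{(b)}(I)} = S$ and hence $\rmr_Q(I) \le \rmr_{(b)}(I)$; the reverse inequality follows symmetrically from $(\frac{I}{b})^n = u^{-n}K^n = S$ for $n \ge \rmr_Q(I)$. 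The one point demanding care, and really the only obstacle, is keeping straight that all these manipulations take place among fractional ideals inside $\rmQ(R)$, where multiplication by the non-zerodivisors $a$ and by the unit $u$ are $R$-module isomorphisms; this is exactly what legitimizes cancelling these factors and transferring the stabilization of one chain to the other.
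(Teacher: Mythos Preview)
Your proposal is correct and follows essentially the same approach as the paper: both proofs rest on the translation $K^n = I^n/a^n$ to pass between the chain $\{K^n\}$ and the reduction condition $I^{n+1}=QI^n$, and then exploit that $u=\frac{b}{a}$ is a unit of $S$ to compare $K^n$ with $(\frac{I}{b})^n = u^{-n}K^n$. The only stylistic difference is that the paper dispatches $S=R[\frac{I}{b}]$ and $\rmr_Q(I)=\rmr_{(b)}(I)$ by a one-line appeal to symmetry (swap the roles of $a$ and $b$), whereas you carry out the unit-multiplication computation explicitly in both directions; the content is the same.
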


\begin{proof}
(1) The first equality is clear, since $I = aK$. The second one follows from the fact that $S = \bigcup_{n \ge 0}K^n$.

(2) Suppose that $(b)$ is a reduction of $I$ and choose an integer $n \gg 0$ so that $S = K^n$ and $I^{n+1} = bI^n$. Then since $\frac{I^{n+1}}{a^{n+1}} = \frac{b}{a}{\cdot}\frac{I^n}{a^n},$ we get 
$S = \frac{b}{a}S$, whence $\frac{b}{a}$ is  an invertible element of $S$. The reverse implication is now clear. To see $S = R[\frac{I}{b}]$, notice that $S \supseteq \frac{a}{b}{\cdot}\frac{I}{a}=\frac{I}{b}$, because $\frac{a}{b} \in S$. Hence $S \supseteq R[\frac{I}{b}]$ and by symmetry we get $S = R[\frac{I}{b}]$. To see $\rmr_{Q}(I) = \rmr_{(b)}(I)$, let $n =\rmr_Q(I)$. Then $K^{n+1} = S =\frac{b}{a}S = \frac{b}{a}K^n$ by Assertion (1) , so that $I^{n+1}=bI^n$. Therefore, $\rmr_Q(I) \ge \rmr_{(b)}(I)$, whence $\rmr_Q(I) = \rmr_{(b)}(I)$ by symmetry. 
\end{proof}

\begin{proposition}[{\cite{GMP, GTT}}]\label{2.4}
The following assertions hold true.
\begin{enumerate}[{\rm (1)}]
\item $\fkc = K : S$ and $\ell_R(R/\fkc)=\ell_R(S/K)$.
\item $\fkc = R:K$ if and only if $S = K^2$.
\item $R$ is a Gorenstein ring if and only if $\rmr_Q(I) \le 1$. When this is the case, $I = Q$, that is $K = R$. 
\item $R$ is an $\AGL$ ring if and only if $\m K^2 \subseteq K$. 
\item Suppose that $R$ is an $\AGL$ ring but not a Gorenstein ring. Then $\rmr_Q(I) = 2$ and $\ell_R(K^2/K)=1$.
\end{enumerate}
\end{proposition}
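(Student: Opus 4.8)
The engine of the whole proposition is canonical-module duality applied to the fractional ideals between $R$ and $\overline{R}$. The plan is first to record three standard facts about the canonical ideal $K$: (a) $K:K=R$ (the endomorphism ring of the canonical module is $R$); (b) the one-line identity $K:K^2=R:K$, since $xK^2\subseteq K$ is equivalent to $(xK)K\subseteq K$, i.e. $xK\subseteq K:K=R$; and (c) every fractional ideal $M$ with $R\subseteq M\subseteq\overline{R}$ is maximal Cohen--Macaulay, hence $K$-reflexive, $K:(K:M)=M$, and the contravariant functor $K:(-)$ is exact and length-preserving on finite-length quotients, so that $\ell_R(N/M)=\ell_R((K:M)/(K:N))$ whenever $M\subseteq N$ are two such ideals. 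I will also use throughout the characterization, recalled in the introduction, that $R$ is $\AGL$ if and only if $\m K\subseteq R$, together with Lemma \ref{2.2}(1), which identifies $\rmr_Q(I)$ with $\min\{n\ge 0\mid K^n=K^{n+1}\}$ and gives $S=K^n$ for $n\ge\rmr_Q(I)$.

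For assertion (1), I would first show $K:S=\fkc$. Since $R\subseteq K$ we get $\fkc=R:S\subseteq K:S$ at once. For the reverse inclusion, note that $K:S$ is an $S$-submodule of $K$ (it is stable under multiplication by $S$), and that $K\subseteq S$ forces $K:S\subseteq K:K=R$ by (a); thus $K:S$ is an $S$-submodule of $R$, hence contained in the largest such, namely $\fkc=R:S$. This gives $K:S=\fkc$. The length equality then follows by applying the duality (c) to $K\subseteq S$: $\ell_R(S/K)=\ell_R((K:K)/(K:S))=\ell_R(R/\fkc)$. For assertion (2), I combine $\fkc=K:S$ with the identity (b): the condition $\fkc=R:K$ reads $K:S=K:K^2$, and since $S$ and $K^2$ are both $K$-reflexive by (c), this is equivalent to $S=K^2$.

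Assertion (3) reduces to the observation that $R$ is Gorenstein if and only if $K=R$: a fractional ideal isomorphic to $\rmK_R\cong K$ and lying in $\overline{R}$ is principal, say $K=xR$, and integrality of $x$ over $R$ together with $x^{-1}\in R$ forces $x$ to be a unit, so $K=R$. By Lemma \ref{2.2}(1), $\rmr_Q(I)\le 1$ is equivalent to $K=K^2$, equivalently $K=S$; and if $K=S$ is a ring then $K\cdot K=K$ gives $K\subseteq K:K=R$, i.e. $K=R$. The converse and the final clause $I=Q$ are immediate. Assertion (4) is then a one-line consequence of (b): $\m K^2\subseteq K$ is equivalent to $\m\subseteq K:K^2=R:K$, i.e. to $\m K\subseteq R$, which is exactly the $\AGL$ condition.

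Finally, for assertion (5) I assume $R$ is $\AGL$ but not Gorenstein, so $\m K\subseteq R$ and $K\ne R$. The key is to pin down two colon ideals to equal $\m$. First, $\m\subseteq R:K\subseteq R$ and $R:K\ne R$ (as $1\in R:K$ would give $K\subseteq R$), so $R:K=\m$; hence by (c), $\ell_R(K^2/K)=\ell_R((K:K)/(K:K^2))=\ell_R(R/(R:K))=\ell_R(R/\m)=1$. Second, $\m K$ is a proper ideal of $R$ (if $\m K=R$ then $\m$ would be invertible and $R$ a $\mathrm{DVR}$, hence Gorenstein), so $\m K\subseteq\m$; this yields $\m\subseteq\m:K\subseteq K:K=R$ with $1\notin\m:K$, whence $\m:K=\m$, and therefore $\ell_R(K^3/K^2)=\ell_R((K:K^2)/(K:K^3))=\ell_R((R:K)/((R:K):K))=\ell_R(\m/(\m:K))=0$. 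Thus $K^3=K^2$ while $K^2\ne K$, so $\rmr_Q(I)=2$. The main obstacle throughout is assertion (5): everything else is formal once the duality (c) is in place, but (5) requires the genuinely arithmetic squeezing arguments that identify $R:K$ and $\m:K$ with $\m$, and these are precisely where the non-Gorenstein (non-$\mathrm{DVR}$) hypothesis must be invoked to exclude invertibility.
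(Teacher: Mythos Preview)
Your proof is correct and proceeds along the same canonical-duality lines as the paper. The paper's arguments for (2) and (4) match yours almost verbatim, while for (1), (3), and (5) the paper simply defers to \cite{GMP} and \cite{GTT}; your explicit colon-ideal computations (in particular pinning down $R:K=\m$ and $\m:K=\m$ in (5)) are precisely the content behind those citations.
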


\begin{proof}
(1) See \cite[Lemma 3.5 (2)]{GMP}.

(2) Since $K : K =R$ (\cite[Bemerkung 2.5 a)]{HK}), we have $$R:K = (K:K):K = K : K^2.$$ Because $\fkc = K:S$ by Assertion (1), $\fkc =R:K$ if and only if $K:S = K:K^2$. The latter condition is equivalent to saying that $S=K^2$ (\cite[Definition 2.4]{HK}).

(3), (5) See \cite[Theorems 3.7, 3.16]{GMP}.

(4) As $K : K =R$, $\m K^2 \subseteq K$ if and only if $\m K \subseteq R$. By \cite[Proposition 3.4]{GTT} the latter condition is equivalent to saying that  $R$ is an AGL ring. 
\end{proof}

Let $\mu_R(M)$ denote, for each finitely generated $R$-module $M$, the number of elements in a minimal system of generators of $M$.

\begin{corollary}\label{2.5}
The following assertions hold true.
\begin{enumerate}[{\rm (1)}]
\item $K:\m \subseteq K^2$ if $R$ is not a Gorenstein ring. If $R$ is not an $\AGL$ ring, then $K :\fkm \subsetneq K^2$.
\item $\m K^2 + K= K :\m$, if $\ell_R(K^2/K)=2$.
\item Suppose that $R$ is not a Gorenstein ring. Then $\mu_R(S/K) = \rmr(R/\fkc)$. Therefore,  $R/\fkc$ is a Gorenstein ring if and only if $\mu_R(S/K)=1$.
\end{enumerate}

\end{corollary}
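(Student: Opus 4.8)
The plan is to treat the three assertions in turn, using throughout the $\rmK_R$-dual $(-)^{\vee}=\Hom_R(-,K)$; on a fractional ideal $J$ of $R$ this is realized inside $\rmQ(R)$ as $J^{\vee}=K:J$, it is inclusion-reversing, and it is involutive ($J^{\vee\vee}=J$) because $K\cong\rmK_R$ and every fractional ideal of $R$ is maximal Cohen--Macaulay, hence $K$-reflexive. In particular $K^{\vee}=K:K=R$, $\fkm^{\vee\vee}=\fkm$, and $(K^{2})^{\vee}=K:K^{2}=(K:K):K=R:K$. Note also that since $R$ admits a canonical ideal it is generically Gorenstein, so $S/K$ and $R/\fkc$ have finite length and $\fkc$ is $\fkm$-primary, which justifies the length computations and the appeal to local duality below. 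For (1), I would apply $(-)^{\vee}$: the inclusion $K:\fkm\subseteq K^{2}$ becomes equivalent to $(K^{2})^{\vee}\subseteq(K:\fkm)^{\vee}$, i.e.\ to $R:K\subseteq\fkm$, and $R:K$ is a proper ideal of $R$ precisely when $K\ne R$, i.e.\ precisely when $R$ is not Gorenstein (Proposition \ref{2.4}\,(3)). If in addition $R$ is not an $\AGL$ ring, then $R$ is not Gorenstein, so $K:\fkm\subseteq K^{2}$ by what we just showed; were equality to hold we would obtain $\fkm K^{2}=\fkm(K:\fkm)\subseteq K$, forcing $R$ to be $\AGL$ by Proposition \ref{2.4}\,(4), a contradiction, whence $K:\fkm\subsetneq K^{2}$.

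For (2), suppose $\ell_R(K^{2}/K)=2$. I would first observe that $R$ is neither Gorenstein nor $\AGL$, for otherwise $\ell_R(K^{2}/K)\le 1$ by Proposition \ref{2.4}\,(3),(5). By (1) we then have $K\subsetneq K:\fkm\subseteq K^{2}$, the first inclusion being strict since $K:\fkm=K$ would give $\fkm=K:(K:\fkm)=K:K=R$. If $\ell_R((K:\fkm)/K)$ were $2$ we would get $K:\fkm=K^{2}$, hence $\fkm K^{2}\subseteq K$, i.e.\ $R$ would be $\AGL$; so $\ell_R((K:\fkm)/K)=1$. On the other hand $\fkm K^{2}+K$ properly contains $K$ (otherwise $\fkm K^{2}\subseteq K$ and $R$ is $\AGL$) and, by Nakayama's lemma applied to $K^{2}/K$, is properly contained in $K^{2}$, so $\ell_R((\fkm K^{2}+K)/K)=1$ as well. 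Finally $\fkm^{2}(K^{2}/K)=0$ since $\ell_R(K^{2}/K)=2$, so $\fkm(\fkm K^{2}+K)\subseteq\fkm^{2}K^{2}+\fkm K\subseteq K$, that is $\fkm K^{2}+K\subseteq K:\fkm$; two nested $R$-submodules of $K^{2}$ that contain $K$ and have the same colength $1$ over $K$ must be equal, and therefore $\fkm K^{2}+K=K:\fkm$.

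For (3), assume $R$ is not Gorenstein, so that $K\subsetneq S$, $\fkc\subsetneq R$, and $R/\fkc$ is a nonzero Artinian local ring. The idea is to dualize $0\to\fkc\to R\to R/\fkc\to 0$ by $\Hom_R(-,K)$: using $\Hom_R(R/\fkc,K)=0$ (a torsion module into a torsion-free one), $\Hom_R(R,K)=K$, $\Hom_R(\fkc,K)=K:\fkc$, and $\Ext_R^{1}(R,K)=0$, one gets $\Ext_R^{1}(R/\fkc,K)\cong(K:\fkc)/K$. Here $K:\fkc=K:(K:S)=S$ by $K$-reflexivity of the fractional ideal $S$ together with $\fkc=K:S$ (Proposition \ref{2.4}\,(1)), so $\Ext_R^{1}(R/\fkc,\rmK_R)\cong S/K$. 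By local duality $\Ext_R^{1}(R/\fkc,\rmK_R)$ is the Matlis dual $\Hom_R(R/\fkc,\rmE_R(R/\fkm))$ of $R/\fkc$, and for a finite-length $R$-module $N$ one has $\mu_R(\Hom_R(N,\rmE_R(R/\fkm)))=\dim_{R/\fkm}\soc(N)$; applying this with $N=R/\fkc$ yields $\mu_R(S/K)=\dim_{R/\fkm}\soc(R/\fkc)=\rmr(R/\fkc)$, and the last claim follows because an Artinian local ring is Gorenstein exactly when its Cohen--Macaulay type equals $1$. Apart from routine checks, the step I expect to demand the most care is this identification in (3): one must correctly recognize $\Ext_R^{1}(R/\fkc,\rmK_R)$ as the Matlis dual of $R/\fkc$ via local duality and use the $K$-reflexivity of $S$ to rewrite $K:\fkc$ as $S$; everything else reduces to formal properties of $(-)^{\vee}$, to Proposition \ref{2.4}, and to elementary length counting.
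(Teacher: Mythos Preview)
Your argument is correct in all three parts. Part (2) is essentially the paper's proof, but in (1) and (3) you take a genuinely different route.

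For (1), the paper argues elementarily: it cites the fact that $\ell_R\bigl((K:\fkm)/K\bigr)=1$ (\cite[Satz~3.3~c)]{HK}) and observes that since $K^2/K$ is nonzero of finite length, its socle $\bigl((K:\fkm)\cap K^2\bigr)/K$ is nonzero, forcing $K:\fkm\subseteq K^2$. Your duality reduction to the triviality $R:K\subseteq\fkm$ is slicker and avoids invoking the type-one property of $K$ separately; the paper's approach, on the other hand, makes the length $1$ of $(K:\fkm)/K$ explicit, which it reuses immediately in (2).

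For (3), the paper works directly with the $K$-dual on colengths: it writes $\mu_R(S/K)=\ell_R\bigl(S/(\fkm S+K)\bigr)=\ell_R\bigl([K:(\fkm S+K)]/(K:S)\bigr)$ and simplifies the right-hand side to $\ell_R\bigl((\fkc:_R\fkm)/\fkc\bigr)=\rmr(R/\fkc)$, using only $K:K=R$ and $\fkc=K:S$. Your route via the long exact sequence and local duality identifies $S/K$ globally as the Matlis dual of $R/\fkc$, which is more conceptual and immediately explains why minimal generators on one side correspond to the socle on the other. Both approaches rest on the same ingredients ($K$-reflexivity of fractional ideals and $\fkc=K:S$); the paper's is a two-line colength computation, yours packages the same information as an $\Ext$ identification.
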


\begin{proof}
(1) As $R$ is not a Gorenstein ring, $K \ne  K^2$ by Lemma \ref{2.2} (1) and Proposition \ref{2.4} (3). Therefore,  $K : \fkm \subseteq K^2$, since  $\ell_R([K:\fkm]/K) = 1$ (\cite[Satz 3.3 c)]{HK})  and $\ell_R(K^2/K) < \infty$.  Proposition \ref{2.4} (4) implies that $K:\m \ne K^2$ if $R$ is not an $\AGL$ ring.

(2) Suppose that $\ell_R(K^2/K) = 2$. Then $R$ is not an $\AGL$ ring. Hence $\m K^2 \not\subseteq K$, while by Assertion (1) $K:\m \subsetneq K^2$. Therefore, since $\m^2 K^2 \subseteq K$, we get
$$K \subsetneq \m K^2 + K \subseteq K : \m \subsetneq K^2,$$
whence $\m K^2 + K =K : \m$, because $\ell_R(K^2/K)=2$.

(3) We get $\mu_R(S/K)= \ell_R(S/(\m S + K))=\ell_R([K : (\m S + K)]/(K:S))$, where the second equality follows by duality (\cite[Bemerkung 2.5 c)]{HK}). Since $K: K=R$ and $\fkc = K : S$ by Proposition \ref{2.4} (1), 
\begin{eqnarray*}
\mu_R(S/K) &=&\ell_R([K : (\m S + K)]/(K:S))\\
&=& \ell_R(\left[(K : \m S) \cap (K:K)\right]/\fkc)\\
&=& \ell_R(\left[(K : S):\m] \cap R\right]/\fkc)\\
&=&\rmr(R/\fkc),
\end{eqnarray*}
where the last equality follows from the fact $[(K : S):\m] \cap R = \fkc:_R\m= (0):_R\m/\fkc$.
\end{proof}

We close this section with the following, which guarantees that $\rank~\calS_Q(I)$ and $S=R[K]$ are independent of the choice of canonical ideals $I$ of $R$ and reductions $Q$ of $I$. Assertions (1) and (2) of Theorem \ref{2.3} are more or less known (see, e.g., \cite{GMP, GGHV}). In particular, in \cite{GGHV} the invariant $\ell_R(I/Q)$ is called the canonical degree of $R$ and intensively investigated. Let us include here a brief proof in our context for the sake of completeness.

\begin{thm}\label{2.3} The following assertions hold true.
\begin{enumerate}[{\rm (1)}]
\item $\ell_R(S/K) = \rme_1(I) -\left[\rme_0(I) - \ell_R(R/I)\right]$. Hence $\rank ~\calS_Q(I) = \ell_R(S/K) =\ell_R(R/\fkc)$.
\item The invariants $\rmr_Q(I)$, $\ell_R(S/K)$, and $\ell_R(K/R)$ are independent of the choice of $I$ and $Q$.
\item The ring $S=R[K]$ is independent of the choice of $I$ and $Q$.
\end{enumerate}
\end{thm}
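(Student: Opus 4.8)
The plan is to prove the three assertions essentially in one stroke, exploiting the formula $\rank~\calS_Q(I) = \rme_1(I) - [\rme_0(I) - \ell_R(R/I)]$ recalled in the introduction (from \cite[Proposition 2.2]{GNO}) together with the module identification $K^{n+1}/K^n \cong I^{n+1}/QI^n$ from Setting \ref{2.1}, and then deducing the independence statements from a single observation: the extension $R \subseteq K$ realizes $K \cong \rmK_R$ inside $\overline{R}$, and any two such realizations differ by multiplication by a unit of $\rmQ(R)$.

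First I would establish (1). Since $I$ is a canonical ideal, it is $\fkm$-primary, so $\ell_R(R/I^{n+1})$ is eventually the Hilbert polynomial $\rme_0(I)\binom{n+1}{1} - \rme_1(I)$. Writing $\ell_R(R/I^{n+1}) = \ell_R(R/Q^{n+1}) - \sum_{i=0}^{n}\ell_R(I^{i+1}/Q^i I) \cdot(\text{telescoping bookkeeping})$ — more precisely, using $Q=(a)$ so $\ell_R(R/Q^{n+1}) = (n+1)\rme_0(I)$ and the filtration relating $I^{i+1}/Q^i I$ — one gets $\rme_1(I) = \sum_{i \ge 1}\ell_R(I^{i+1}/Q^i I) = \sum_{i\ge 1}\ell_R(K^{i+1}/K^i)$. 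Because the chain $K \subseteq K^2 \subseteq K^3 \subseteq \cdots$ stabilizes at $S = K^n$ for $n \ge \rmr_Q(I)$ by Lemma \ref{2.2}(1), this telescopes to $\ell_R(S/K)$. Comparing with $\rank~\calS_Q(I) = \rme_1(I) - [\rme_0(I) - \ell_R(R/I)]$ gives the first display; the equality with $\ell_R(R/\fkc)$ is then immediate from Proposition \ref{2.4}(1).

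Next, for (2) and (3), I would argue that $K$ is unique up to multiplication by a unit of $\rmQ(R)$. Indeed, given two canonical ideals $I, I'$ with reductions $Q=(a)$, $Q'=(a')$, the fractional ideals $K = I/a$ and $K' = I'/a'$ both satisfy $R \subseteq K, K' \subseteq \overline{R}$ and $K \cong K' \cong \rmK_R$ as $R$-modules; any $R$-isomorphism $K \xrightarrow{\sim} K'$ extends to $\rmQ(R)$ and is therefore given by multiplication by some unit $u \in \rmQ(R)$, so $K' = uK$. Then $K'^n = u^n K^n$ for all $n$, hence $\rmr_Q(I) = \min\{n \mid K^n = K^{n+1}\} = \min\{n \mid K'^n = K'^{n+1}\} = \rmr_{Q'}(I')$ by Lemma \ref{2.2}(1); multiplication by $u^n$ gives $R$-isomorphisms $S/K \cong uS/uK$ and $K/R \cong uK/uR$, and since $uR \subseteq uK = K' \subseteq S' = R[K'] = uR[K] = uS$ we get $\ell_R(S/K) = \ell_R(S'/K')$ and $\ell_R(K/R) = \ell_R(K'/R)$; finally $S' = uS$ together with $S, S' \subseteq \overline{R}$ forces $u$ to be a unit of $\overline{R}$ lying in the common total quotient ring, and comparing $1 \in S \cap S'$ pins down $S = S'$ — this last point I would phrase via $S = \bigcup_n K^n = \bigcup_n u^n K^n$ and the fact that $S$ is the unique minimal birational module-finite extension generated by $K$, or more cleanly by noting $S = \End_R(K) / (\text{something})$; in any case $S$ is the endomorphism ring $K : K$-type object that is isomorphism-invariant.

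The main obstacle I anticipate is the very last clause — showing $S = R[K]$ itself (not merely up to isomorphism) is independent of the choice — because the isomorphism $K \cong K'$ only gives $S' = uS$, and one must rule out $u \neq 1$ mattering. The clean fix is to observe that $u$ is a unit of $\rmQ(R)$ with $uS = S'$, and both $S$ and $S'$ are subrings of $\overline{R}$ containing $R$; since $1 \in S$, $u^{-1} = u^{-1}\cdot 1 \in u^{-1}S' \cap \ldots$, and pushing this through (using that a unit $u$ with $uA = A'$ for subrings $A, A'$ containing $1$ forces $u, u^{-1} \in A' $, hence $A' = A$) yields $S = S'$. Alternatively, and perhaps most transparently, one invokes that $S \cong \End_R(\rmK_R)$-style description: $S = K:_{\rmQ(R)} K$ is not quite it, but $S$ coincides with $\bigcap$-stabilization which only depends on the $R$-module $\rmK_R$ up to the canonical identification. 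I would present the unit argument as the primary route and keep the endomorphism-ring remark as a backup sentence.
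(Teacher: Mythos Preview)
Your plan for (1) is essentially the paper's, modulo a slip: the filtration $Q^{n+1} \subseteq Q^nI \subseteq \cdots \subseteq I^{n+1}$ has successive quotients isomorphic to $I^{i+1}/QI^i$ (not $I^{i+1}/Q^iI$), and the resulting identity is $\rme_1(I) = \sum_{i \ge 0}\ell_R(K^{i+1}/K^i) = \ell_R(S/R)$, with the sum starting at $i=0$. Subtracting $\ell_R(K/R) = \ell_R(I/Q) = \rme_0(I) - \ell_R(R/I)$ then gives the first display. The paper simply quotes $\ell_R(S/R) = \rme_1(I)$ from \cite[Lemma~2.1]{GMP} rather than rederiving it.

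For (2) and (3) there is a genuine gap. The assertion ``$S' = R[K'] = uR[K] = uS$'' is false as written: from $K' = uK$ one gets $R[K'] = \sum_{m \ge 0} u^m K^m$, which is not $u\sum_{m \ge 0} K^m = uS$ a priori. This undermines both your reduction-number step (the middle equality $\min\{n \mid K^n = K^{n+1}\} = \min\{n \mid (K')^n = (K')^{n+1}\}$ amounts to $K^n = K^{n+1} \Leftrightarrow K^n = uK^{n+1}$, which presupposes that $u$ is a unit of $S$) and your length comparison for $K'/R$ (you only obtain $K/R \cong uK/uR$, but $uR \ne R$, so this is not $K'/R$). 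One can repair the argument by first proving that $u$ is a unit of $S$: since $1 \in K' = uK$ forces $u^{-1} \in K \subseteq S$, and $u^{-1}$ is a unit of $\overline{R}$, lying-over for the integral extension $S \subseteq \overline{R}$ makes $u^{-1}$ a unit of $S$; then $K' = uK \subseteq S$ gives $S' \subseteq S$, and symmetry yields $S = S'$, after which everything follows and your ``unit argument'' becomes unnecessary. The paper avoids all of this with a cleaner two-step factorization: writing the second canonical ideal as $J = \varepsilon I$ and setting $b' = \varepsilon a$, one has $J/b' = I/a$ \emph{exactly}, so the passage $(I,a) \rightsquigarrow (J,b')$ changes none of $K$, $S$, $\rmr_Q(I)$, $\ell_R(K/R)$; the remaining passage $(J,b') \rightsquigarrow (J,b)$ alters only the reduction of a fixed ideal and is handled directly by Lemma~\ref{2.2}(2).
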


\begin{proof}
(1) We have $K/R \cong I/Q$ as an $R$-module, whence $$\ell_R(K/R) = \ell_R(I/Q) = \ell_R(R/Q) - \ell_R(R/I)=\rme_0(I) - \ell_R(R/I).$$ So,  the first equality is clear, because $\ell_R(S/R) = \rme_1(I)$ by \cite[Lemma 2.1]{GMP} and $\ell_R(S/K) = \ell_R(S/R) - \ell_R(K/R)$. See \cite[Proposition 2.2]{GNO} and Proposition \ref{2.4} (1) for the second and the third equalities.

(2), (3)   The invariant $\ell_R(S/R)=\rme_1(I)$ is independent of the choice of $I$, since the first Hilbert coefficient $\rme_1(I)$ of canonical ideals $I$ is independent of the choice of $I$ (\cite[Corollary 2.8]{GMP}). Therefore, because $\ell_R(I/Q) = \rme_0(I) - \ell_R(R/I)$ depends only on $I$,  to see that $\ell_R(S/K) = \ell_R(S/R) - \ell_R(I/Q)$ is independent of the choice of $I$ and $Q$, it is enough to show that $\ell_R(K/R) = \ell_R(I/Q)$ is independent of the choice of $I$. Let $J$ be another canonical ideal of $R$ and assume that $(b)$ is a reduction of $J$.  Then, since $I \cong J$ as an $R$-module, $J = \varepsilon I$ for some invertible element $\varepsilon$ of $\rmQ(R)$ (\cite[Satz 2.8]{HK}). Let $b' = \varepsilon a$. Then $(b')$ is a reduction of $J$, $\rmr_{(b')}(J) = \rmr_Q(I)$, and $\ell_R(J/(b'))= \ell_R(I/Q)$, clearly.  Hence $\ell_R(J/(b))=\ell_R(J/(b'))= \ell_R(I/Q)$, which is independent of $I$. Because $\rmr_{(b)}(J) = \rmr_{(b')}(J)$ by Lemma \ref{2.2} (2),  the reduction number $\rmr_Q(I)$ is independent of the choice of canonical ideals $I$ and reductions $Q$ of $I$. Because $R[\frac{I}{a}]= R[\frac{J}{b'}]=R[\frac{J}{b}]$ where the second equality follows from Lemma \ref{2.2} (2), the ring $S=R[K]$ is  independent of the choice of $I$ and $Q$ as well. 
\end{proof}


\section{$2$-$\AGL$ rings and Proof of Theorem $\ref{1.4}$}\label{section3}
The main purpose of this section is to prove Theorem \ref{1.4}. Let us maintain Setting \ref{2.1}. We  begin with the following.

\begin{lemma}\label{2AGL}
The ring $R$ is $2$-$\AGL$ if and only if $K^2= K^3$ and $\ell_R(K^2/K)=2$.
\end{lemma}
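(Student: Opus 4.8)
The plan is to connect the rank of the Sally module to the invariants $\rmr_Q(I)$ and $\ell_R(K^2/K)$ via the isomorphisms $K^{n+1}/K^n \cong I^{n+1}/QI^n$ already recorded in Setting~\ref{2.1}, together with the graded structure of $\calS_Q(I)$ from the introduction. First I would recall that $\rank~\calS_Q(I) = \ell_R(S/K)$ by Theorem~\ref{2.3}~(1), so the claim becomes: $\ell_R(S/K) = 2$ if and only if $K^2 = K^3$ and $\ell_R(K^2/K) = 2$. For the forward direction, assume $\ell_R(S/K) = 2$. Since $R$ is then not Gorenstein (Gorenstein forces $K = R = S$ by Proposition~\ref{2.4}~(3)), we have $K \subsetneq K^2 \subseteq S$ by Lemma~\ref{2.2}~(1). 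The chain $K \subsetneq K^2 \subseteq K^3 \subseteq \cdots \subseteq S$ sits inside a module of colength $2$ over $K$, so only two things can happen: either $K \subsetneq K^2 \subsetneq K^3 = S$ with each step of length $1$, or $K \subsetneq K^2 = S$ with $\ell_R(K^2/K) = 2$. I must rule out the first case.

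To exclude the first possibility I would use the graded $\calT$-module structure of $\calS_Q(I)$. Recall $[\calS_Q(I)]_n = I^{n+1}/Q^nI \cong K^{n+1}/K^n$, and $\calS_Q(I)$ is generated in degree $1$ as a $\calT$-module (standard, since $\calR$ is generated in degrees $0,1$ over $\calT$); more precisely $\calT_1 {\cdot} [\calS_Q(I)]_n = [\calS_Q(I)]_{n+1}$, which on the ring side says $Q(K^{n+1}/K^n)$ surjects onto, i.e. $aK^{n+1} + K^{n+1} \twoheadrightarrow K^{n+2}$ appropriately — concretely $K^{n+2} = K{\cdot}K^{n+1}$ combined with $\rank$ being computed after localizing at $\fkp = \fkm\calT$. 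In the first case $\ell_R(K^2/K) = \ell_R(K^3/K^2) = 1$, so $[\calS_Q(I)]_1$ and $[\calS_Q(I)]_2$ each have length $1$ over $R$; localizing at $\fkp$, $[\calS_Q(I)]_\fkp$ would have a two-step filtration with graded pieces of rank $1$ each, giving $\rank = 2$, which is consistent. So the length count alone does not distinguish the cases — the real point is that $K^2 \ne K^3$ forces $\rmr_Q(I) \ge 3$, and I would show this is incompatible with $\ell_R(S/K) = 2$ by a more careful analysis. Actually the cleanest route: if $K^2 \subsetneq K^3$ then $\ell_R(K^3/K^2) \ge 1$ and $\ell_R(K^2/K) \ge 1$ already account for all of $\ell_R(S/K) = 2$, so $K^3 = S$ and each step has length exactly $1$; then $\fkm K^2 \subseteq K$ would follow (the unique submodule strictly between $K$ and $K^2$ of colength $1$ is $K:\fkm$ by Corollary~\ref{2.5}~(1) applied in $K^2$, and... ), making $R$ an AGL ring by Proposition~\ref{2.4}~(4), hence $\ell_R(K^2/K) = 1$ by Proposition~\ref{2.4}~(5), contradiction.

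The converse is the easy direction: if $K^2 = K^3$ then $\rmr_Q(I) \le 2$, so $S = K^2$ by Lemma~\ref{2.2}~(1), and then $\ell_R(S/K) = \ell_R(K^2/K) = 2$, giving $\rank~\calS_Q(I) = 2$, i.e. $R$ is $2$-$\AGL$. The main obstacle is the forward direction — specifically, showing that $\ell_R(S/K) = 2$ forces $K^2 = K^3$ rather than a length-$1$-plus-length-$1$ tower $K \subsetneq K^2 \subsetneq K^3 = S$. The key leverage is Proposition~\ref{2.4}~(5): an AGL non-Gorenstein ring has $\ell_R(K^2/K) = 1$ exactly, so any configuration with $\ell_R(K^2/K) = 1$ and $K \ne K^2$ must have $\fkm K^2 \not\subseteq K$ once we also know $K^2 \subsetneq S$, and I would derive the needed containment $\fkm K^2 \subseteq K$ from the fact that in the tower the module $K^2$ is the unique one of colength $1$ in $S$ and $\fkm K^3 = \fkm S \subseteq$ (something forcing descent). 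I expect this step to require the observation $\fkm S \subseteq \fkc \subseteq K$ only when $\ell_R(S/K)$ is small, so a direct argument with $\fkm{\cdot}K^2 + K$ sitting between $K$ and $K^2$, combined with Corollary~\ref{2.5}~(1), will close it. Everything else is bookkeeping with the established length formulas.
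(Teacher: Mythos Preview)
Your converse direction is fine and matches the paper. The forward direction has the right endgame in mind but you are tying yourself in knots over a triviality and then misstating the contradiction. The step you keep circling around---getting $\fkm K^2 \subseteq K$ in the hypothetical case $\ell_R(K^2/K)=1$---is immediate: a module of length one is simple, hence annihilated by $\fkm$. There is no need to invoke Corollary~\ref{2.5}~(1), no ``unique submodule strictly between $K$ and $K^2$'' (there is none when the quotient has length one), and certainly not the sentence ``any configuration with $\ell_R(K^2/K) = 1$ and $K \ne K^2$ must have $\fkm K^2 \not\subseteq K$ once we also know $K^2 \subsetneq S$'', which is simply false. Once you have $\fkm K^2\subseteq K$, Proposition~\ref{2.4}~(4) gives that $R$ is AGL; the contradiction is then that Proposition~\ref{2.4}~(5) forces $\rmr_Q(I)=2$, i.e.\ $K^2=K^3$, against your case hypothesis $K^2\subsetneq K^3$. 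Writing ``hence $\ell_R(K^2/K)=1$, contradiction'' is not a contradiction---that was your assumption.

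The paper's proof avoids the case split entirely. It first observes that $R$ is not an AGL ring (if it were, then $\ell_R(S/K)\le 1$ by Proposition~\ref{2.4}~(3),(5)), and then uses the contrapositive of Proposition~\ref{2.4}~(4): not AGL means $\fkm K^2\not\subseteq K$, so $K^2/K$ is not simple, i.e.\ $\ell_R(K^2/K)\ge 2$. Since $K^2\subseteq S$ and $\ell_R(S/K)=2$, equality is forced and $S=K^2$. This is the same leverage you eventually reach for, applied once at the top rather than inside a case analysis.
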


\begin{proof}
If $R$ is a $2$-$\AGL$ ring, then $\ell_R(S/K) = 2$ by Theorem \ref{2.3} (1), while  by Proposition \ref{2.4} (5) $\ell_R(K^2/K) \ge 2$ since $R$ is not an $\AGL$ ring; therefore $S = K^2$. Conversely, if $K^2 = K^3$, then $K^2 = K^n$ for all $n \ge 2$, so that $S = K^2$. Hence the equivalence follows. 
\end{proof}

Before going ahead, let us note basic examples of $2$-$\AGL$ rings. Later we will give more examples. 
Let $\rmr(R) = \ell_R(\Ext_R^1(R/\m,R))$ denote the Cohen-Macaulay type of $R$.

\begin{ex}\label{ex3} Let $k[[t]]$ and $k[[X,Y,Z,W]]$ denote the formal power series rings over a field $k$. 
\begin{enumerate}[{\rm (1)}] 
\item Consider the rings $R_1 = k[[t^3,t^7,t^8]]$, $R_2 = k[[X,Y,Z,W]]/(X^3-YZ, Y^2-XZ, Z^2-X^2Y, W^2-XW)$, and $R_3= k[[t^3,t^7,t^8]] \ltimes k[[t]]$ (the idealization of $k[[t]]$ over $k[[t^3,t^7,t^8]]$). Then these rings $R_1$, $R_2$, and $R_3$ are $2$-$\AGL$ rings. The ring $R_1$ is an integral domain, $R_2$ is a reduced ring but not an integral domain, and $R_3$ is not a reduced ring.
\item Let $c \ge 4$ be an integer such that $c \not\equiv 0~\mod~3$ and set $R = k [[t^3, t^{c+3}, t^{2c}]]$. Then $R$ is a $2$-$\AGL$ ring such that $\rmv(R) = \rme (R)=3$ and $\rmr(R)=2$. 
\end{enumerate}
\end{ex}

We note basic properties of $2$-$\AGL$ rings.

\begin{prop}\label{2.7}
Suppose that $R$ is a $2$-$\AGL$ ring and set $r =\rmr(R)$. Then we have the following.
\begin{enumerate}[{\rm (1)}]
\item $\fkc = K:S =R:K$.
\item  $\ell_R(R/\fkc) = 2$. Hence there is a minimal system $x_1, x_2, \ldots, x_n$ of generators of $\fkm$ such that $\fkc = (x_1^2)+(x_2, x_3, \ldots, x_n)$.
\item $S/K \cong R/\fkc$ and $S/R \cong K/R \oplus R/\fkc$ as $R/\fkc$-modules. 
\item $K/R \cong (R/\fkc)^{\oplus \ell} \oplus (R/\m)^{\oplus m}$ as an $R/\fkc$-module for some $\ell >0$ and $m \ge 0$ such that $\ell + m = r-1$. Hence $\ell_R(K/R) = 2\ell + m$. In particular, $K/R$ is  a free $R/\fkc$-module if and only if $\ell_R(K/R) = 2(r-1)$.
\item $\mu_R(S)=r$.
\end{enumerate}
\end{prop}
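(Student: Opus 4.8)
The plan is to run everything off the basic characterization in Theorem \ref{1.4}, using in particular that a $2$-$\AGL$ ring satisfies $S=K^2$, $\ell_R(S/K)=2$, and $\ell_R(R/\fkc)=2$, together with the general identities already collected in Section 2. For (1), since $R$ is $2$-$\AGL$ it is not Gorenstein, so $K^2=K^3=S$ by Lemma \ref{2AGL}; then $\fkc=K:S$ by Proposition \ref{2.4} (1), and $\fkc=R:K$ follows from Proposition \ref{2.4} (2) because $S=K^2$. For (2), the equality $\ell_R(R/\fkc)=2$ is exactly Condition (7) of Theorem \ref{1.4} (or Theorem \ref{2.3} (1)). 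The statement about the shape of $\fkc$ is pure commutative algebra: $\fkc\subsetneq R$ with $\ell_R(R/\fkc)=2$ forces $\fkm^2\subseteq\fkc$ but $\fkm\not\subseteq\fkc$ (else $\fkc\subseteq\fkm^2$ would make the length too large, or $\fkc=\fkm$ would give length $1$); choosing $x_1\in\fkm\setminus\fkc$ and then completing modulo $\fkc$ to a minimal generating set of $\fkm/\fkc$, one arranges $\fkc=(x_1^2)+(x_2,\dots,x_n)$. This is the one genuinely ring-theoretic step, but it is standard and short once $\ell_R(R/\fkc)=2$ is known.

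For (3), I would use the duality machinery for the canonical module, precisely as in the proof of Corollary \ref{2.5} (3). We have $S/K$ killed by $\fkc$ (since $\fkc=R:S$), so $S/K$ is an $R/\fkc$-module of length $2=\ell_R(R/\fkc)$; as $R/\fkc$ is an Artinian Gorenstein ring only when $r=1$, in general I claim $S/K\cong R/\fkc$ by noting that the canonical dual of $S/K$ over $R$ is $K:(\text{a lift})$ and, via $\fkc=K:S$ and $K:K=R$, this dual is $R/\fkc$ itself; alternatively, $S/K$ is cyclic over $R/\fkc$ because $\mu_R(S/K)=\rmr(R/\fkc)$ by Corollary \ref{2.5} (3) and one checks $\rmr(R/\fkc)=1$ here (a length-$2$ local Artinian ring whose quotient $R/\fkm$ has length $1$ has type $1$, i.e.\ is Gorenstein — compare Example \ref{ex3}). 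Having $S/K\cong R/\fkc$, the exact sequence $0\to K/R\to S/R\to S/K\to 0$ splits over $R/\fkc$ because $S/K\cong R/\fkc$ is $R/\fkc$-free, giving $S/R\cong K/R\oplus R/\fkc$.

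For (4), recall $K/R$ is an $R/\fkc$-module (as $\fkc=R:K$ by (1)), and $\fkc$ has the special form from (2), so $R/\fkc\cong (R/\fkm)[\,x_1\,]/(x_1^2)$ is a local Artinian principal ideal ring; over such a ring every finitely generated module is a direct sum of copies of $R/\fkc$ and copies of $R/\fkm$, say $K/R\cong (R/\fkc)^{\oplus\ell}\oplus(R/\fkm)^{\oplus m}$. Counting minimal generators gives $\ell+m=\mu_R(K/R)=\mu_R(I/Q)$, and since $K\cong\rmK_R$ one has $\mu_R(K/R)=r-1$ (the canonical module has $\mu_R(\rmK_R)=r$, and killing the free summand from the embedding $R\hookrightarrow K$ drops it by one; equivalently use $\fkm K\subseteq R$ fails but $K/R$ is generated by $r-1$ elements — here I'd cite the standard fact $\mu_R(\rmK_R)=r$ and that $R\to K$ is part of a minimal generating set). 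Then $\ell>0$ because $m=r-1$ would force $\fkm\cdot(K/R)=0$, i.e.\ $\fkm K\subseteq R$, making $R$ an $\AGL$ ring (Proposition \ref{2.4} (4)) — contradiction. Length count: $\ell_R(R/\fkc)=2$ and $\ell_R(R/\fkm)=1$ give $\ell_R(K/R)=2\ell+m$, and freeness of $K/R$ over $R/\fkc$ means $m=0$, i.e.\ $\ell_R(K/R)=2\ell=2(r-1)$. Finally (5): $\mu_R(S)=\mu_R(S/R)+1$ since $R\to S$ is a split inclusion of $R$-modules contributing one generator, and $\mu_R(S/R)=\mu_R(K/R)+\mu_R(R/\fkc)=(r-1)+1=r$ would overcount — instead, from $S/R\cong K/R\oplus R/\fkc$ and the direct-sum decomposition of $K/R$, the minimal number of generators of $S/R$ is $\ell+m+1=(r-1)+1=r$... here I should be careful: actually $\mu_R(S)=\mu_R(S/\fkm S)$, and one computes directly $\mu_R(S)=\ell_R(S/\fkm S)$; using $S/\fkm S$ and that $S$ has a two-step filtration $R\subseteq K\subseteq S$ with $\mu$ additive up to the overlap, the cleanest route is $\mu_R(S)=\mu_R(K)+\mu_R(S/K)$ minus nothing, with $\mu_R(K)=r$ (as $K\cong\rmK_R$) and checking the generator of $S/K$ is already in $\fkm S$ — I expect $\mu_R(S)=r$ to fall out once the module structures in (3) and (4) are in hand.

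The main obstacle is step (3)/(4): pinning down that $S/K\cong R/\fkc$ (equivalently $\rmr(R/\fkc)=1$) and that $\ell>0$, and then being careful in (5) about which inclusions are split as $R$-modules versus merely as abelian groups. Everything else is bookkeeping with lengths and the already-established equivalences of Theorem \ref{1.4} and Proposition \ref{2.4}.
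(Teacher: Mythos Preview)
Your arguments for (1)--(4) are correct and close to the paper's, with two differences worth noting. For (3), you deduce $S/K\cong R/\fkc$ via Corollary~\ref{2.5}\,(3): since $\ell_R(R/\fkc)=2$, the Artinian local ring $R/\fkc$ is Gorenstein, so $\mu_R(S/K)=\rmr(R/\fkc)=1$, and a cyclic module with annihilator $K:S=\fkc$ is $R/\fkc$. The paper instead argues once and for all that $R/\fkc$, being Artinian Gorenstein, is self-injective, so every \emph{faithful} finitely generated $R/\fkc$-module splits off a copy of $R/\fkc$; since $S/K$, $K/R$, and $S/R$ are all faithful over $R/\fkc$ (their annihilators are exactly $\fkc$ by (1)), this gives the structure of all three modules and sets up (4) simultaneously. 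For (4), you invoke the structure theorem for modules over the principal ideal ring $R/\fkc$, and you show $\ell>0$ by observing that $\ell=0$ would force $\fkm K\subseteq R$ and make $R$ an $\mathrm{AGL}$ ring; the paper gets $\ell>0$ directly from faithfulness of $K/R$. Both routes are valid; the paper's is slightly more economical because the self-injectivity argument does double duty for (3) and (4).

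Your confusion in (5) is justified: the stated conclusion $\mu_R(S)=r$ is off by one. From $S/R\cong K/R\oplus R/\fkc$ one reads off $\mu_R(S/R)=(r-1)+1=r$, and this is exactly what the paper's one-line proof computes. But the short exact sequence $0\to R\to S\to S/R\to 0$, together with $1\notin\fkm S$ (Nakayama, since $S$ is module-finite over $R$), gives $\mu_R(S)=1+\mu_R(S/R)=r+1$; you do not even need the inclusion $R\hookrightarrow S$ to split. The ring $R=k[[t^3,t^7,t^8]]$ of Example~\ref{ex3}\,(1) already shows this: there $r=2$, $S=k[[t]]$, and $\mu_R(S)=\dim_k k[[t]]/t^3k[[t]]=3$. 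So your first instinct $\mu_R(S)=\mu_R(S/R)+1$ was correct; stop there rather than trying to reconcile with the printed $r$.
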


\begin{proof}
(1), (2)  We have $\fkc= K:S$ and $\ell_R(R/\fkc)=2$ by Proposition \ref{2.4} (1). Hence $R:K=\fkc$, because $R :K=(K:K):K= K :K^2$. The second assertion in Assertion (2) is clear, because $\m^2 \subseteq \fkc$ and $\ell_R(\m/\fkc)=1$. 

(3), (4)  Because $R/\fkc$ is an Artinian  Gorenstein local ring, any finitely generated $R/\fkc$-module $M$ contains $R/\fkc$ as a direct summand, once $M$ is faithful. If $M$ is not faithful, then $(0):_{R/\fkc}M \supseteq \m/\fkc$ as $\ell_R(R/\fkc)=2$, so that $M$ is a vector space over $R/\fkm$. Therefore, every finitely generated $R/\fkc$-module $M$ has a  unique direct sum decomposition
$$M \cong (R/\fkc)^{\oplus \ell} \oplus (R/\fkm)^{\oplus m}$$ with $\ell, m \ge 0$ such that $\mu_{R/\fkc}(M) = \ell + m$.  Because by Assertion (1) the modules $S/R$, $K/R$, and $S/K$ are faithful  over $R/\fkc$, they contain $R/\fkc$ as a direct summand; hence $S/K \cong R/\fkc$, because $\ell_R(S/K) = \ell_R(R/\fkc)$ by Proposition \ref{2.4} (1). Consequently, the canonical exact sequence
$$0 \to K/R \to S/R \to S/K \to 0$$
of $R/\fkc$-modules is split, so that  $S/R \cong K/R \oplus R/\fkc$. Since $\mu_R(K/R) = r-1 >0$, $K/R$ contains $R/\fkc$ as a direct summand, whence
$$K/R \cong (R/\fkc)^{\oplus \ell} \oplus (R/\fkm)^{\oplus m}$$
with $\ell > 0$ and $m \ge 0$, where $\ell + m = r-1$ and $\ell_R(K/R) = 2\ell + m$.

(5) This is now clear, since   $S/R \cong K/R \oplus R/\fkc$. 
\end{proof}

The $2$-$\AGL$ rings $R$ such that $K/R$ are $R/\fkc$-free enjoy a certain specific property, which we will show in Section \ref{section4}. Here let us note Example \ref{ex5} (resp. Example \ref{ex2}) of $2$-$\AGL$ rings, for which $K/R$ is a free $R/\fkc$-module (resp. not a free $R/\fkc$-module).  Let $V=k[[t]]$ denote the formal power series ring over a field $k$.

\begin{Example}\label{ex5}
Let $e \ge 3$ and $n \ge 2$ be integers. Let $R = k[[t^e, \{t^{en+i}\}_{1 \le i \le e-2}, t^{2en-(e+1)}]]$ and $\m$ the maximal ideal of $R$. Let $K =R+\sum_{1 \le i \le e-2}Rt^{(n-2)e + i}$. Then we have the following.
\begin{enumerate}[{\rm (1)}]
\item $I=t^{2(n-1)e}K$ is a canonical ideal of $R$ containing $(t^{2(n-1)e})$ as a reduction. 
\item $R$ is a $2$-$\AGL$ ring such that $\m^2 = t^e\m$ and $\rmr(R)=e-1$.
\item $K/R \cong (R/\fkc)^{\oplus 2(e-2)}$ as an $R/\fkc$-module.
\end{enumerate}
\end{Example}

\begin{Example}\label{ex2}
Let $e \ge 4$ be an integer. Let $R=k[[t^e, \{t^{e+i}\}_{3 \le i \le e-1},t^{2e+1}, t^{2e+2}]]$ and $\m$ the maximal ideal of $R$. Let $K = R + Rt + \sum_{3 \le i \le e-1}Rt^i$. Then we have the following.
\begin{enumerate}[{\rm (1)}]
\item $I=t^{e+3}K$ is a canonical ideal of $R$ containing $(t^{e+3})$ as a reduction.
\item $R$ is a $2$-$\AGL$ ring such that $\m^2 = t^e\m$ and $\rmr(R) = e-1$.
\item $K/R \cong (R/\fkc) \oplus (R/\m)^{\oplus (e-3)}$ as an $R/\fkc$-module. 
\item $\m : \m = k[[t^3,t^4, t^5]]$.
\end{enumerate}
\end{Example}

We note the following.

\begin{thm}\label{2.6} Suppose that $R$ is a $2$-$\AGL$ ring. Then the following assertions hold true.
\begin{enumerate}[{\rm (1)}]
\item $R$ is not an $\AGL$ ring.
\item There is an exact sequence
$$0 \to \calB(-1) \to \calS_Q(I) \to \calB(-1)\to 0$$
of graded $\calT$-modules.
\item  $\fkm {\cdot} \calS_Q(I) \ne (0)$. Therefore, the above exact sequence  is not split. 
\end{enumerate}
\end{thm}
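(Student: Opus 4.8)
The plan is to extract everything from Lemma \ref{2AGL}, Proposition \ref{2.7}, and Corollary \ref{2.5}(2). For (1), since $R$ is $2$-$\AGL$ we have $\ell_R(K^2/K)=2$ by Lemma \ref{2AGL}, so Proposition \ref{2.4}(5) shows $R$ cannot be an $\AGL$ ring which is not Gorenstein; and $R$ is not Gorenstein because $K\ne K^2$ (Lemma \ref{2.2}(1), Proposition \ref{2.4}(3)), so $R$ is not $\AGL$. For (2), I first recall the graded structure $[\calS_Q(I)]_n = I^{n+1}/Q^nI \cong K^{n+1}/K^n$ for $n\ge 1$ and $[\calS_Q(I)]_n=(0)$ for $n\le 0$. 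Since $S=K^2=K^n$ for all $n\ge 2$ (by Lemma \ref{2AGL} and its proof), the graded pieces are $[\calS_Q(I)]_1 \cong K^2/K$ (length $2$) and $[\calS_Q(I)]_n \cong K^{n+1}/K^n = (0)$ for $n\ge 2$. So $\calS_Q(I)$ is concentrated in degree $1$ with length $2$; as a $\calT$-module it is annihilated by $\fkm\calT$ in degrees $\ge 2$ trivially, but I must understand the $\calT$-action from degree $1$ upward, which is zero since the target is zero. The key point is the $\fkm$-action within degree $1$: by Corollary \ref{2.5}(2), $\fkm K^2 + K = K:\fkm$, and $\ell_R((K:\fkm)/K)=1$ (by \cite[Satz 3.3 c)]{HK}), so $\fkm{\cdot}(K^2/K) = (K:\fkm)/K$ is the unique $R$-submodule of $K^2/K$ of length $1$. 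Setting $\calN \subseteq \calS_Q(I)$ to be the graded $\calT$-submodule generated by $\fkm{\cdot}[\calS_Q(I)]_1$, we get $\calN$ concentrated in degree $1$ with $\calN_1 = (K:\fkm)/K \cong R/\fkm$, hence $\calN \cong \calB(-1)$, and the quotient $\calS_Q(I)/\calN$ is concentrated in degree $1$ with length-$1$ piece $\cong R/\fkm$, hence also $\cong \calB(-1)$. This gives the exact sequence $0 \to \calB(-1) \to \calS_Q(I) \to \calB(-1) \to 0$.

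For (3), the point is precisely that $\fkm{\cdot}\calS_Q(I)\ne(0)$: we just identified $\fkm{\cdot}[\calS_Q(I)]_1 = (K:\fkm)/K \ne (0)$ because $K \subsetneq K:\fkm$ (as $R$ is not Gorenstein, $K \ne K^2$, hence $K:\fkm \subseteq K^2$ properly contains $K$ by Corollary \ref{2.5}(1)). If the sequence in (2) were split, then $\calS_Q(I) \cong \calB(-1)\oplus\calB(-1)$ as graded $\calT$-modules, and since $\fkm\calB = (0)$ this would force $\fkm{\cdot}\calS_Q(I) = (0)$, a contradiction. Hence the sequence is not split.

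The main obstacle is making the identification of the submodule $\calN$ as a genuine $\calT$-submodule precise: one needs that $\fkm{\cdot}[\calS_Q(I)]_1$, viewed inside $\calS_Q(I)$, together with all its $\calT$-multiples, stays in degree $1$ — which is automatic here since all higher degrees vanish — and that the resulting short exact sequence is one of graded $\calT$-modules, not merely of $R$-modules. Concretely, $\calN$ is the image of $\calB(-1) \cong \calT/\fkm\calT$ shifted, mapping onto $(K:\fkm)/K$ in degree $1$; one checks this map is $\calT$-linear because the $\calT$-action on both sides in degrees $\ge 2$ is zero. Everything else is bookkeeping with the length-$2$ $R/\fkc$-module $K^2/K = S/K$, whose socle structure is controlled by Corollary \ref{2.5}(2) and Proposition \ref{2.7}(3).
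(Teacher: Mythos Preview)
Your argument for (2) rests on a misidentification of the graded pieces of the Sally module. You write $[\calS_Q(I)]_n \cong K^{n+1}/K^n$, but in fact $[\calS_Q(I)]_n = I^{n+1}/Q^nI$, whereas the isomorphism in the paper is $K^{n+1}/K^n \cong I^{n+1}/QI^n$. These are different quotients. Since $I^3 = QI^2$, one has $I^{n+1}=Q^{n-1}I^2$ for every $n\ge 1$, and because $a$ is a non-zerodivisor,
\[
[\calS_Q(I)]_n \;=\; Q^{n-1}I^2/Q^nI \;\cong\; I^2/QI
\]
for all $n\ge 1$. Thus every graded piece of $\calS_Q(I)$ has length $2$, and the module is \emph{not} concentrated in degree $1$. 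Correspondingly, $\calB(-1)$ is not a length-one object sitting in degree $1$: $\calB=(R/\fkm)[T]$ is a polynomial ring, so $\calB(-1)$ has a one-dimensional piece in every degree $n\ge 1$. Your construction of $\calN$ as a degree-$1$ submodule therefore does not produce anything isomorphic to $\calB(-1)$, and the quotient you describe is not $\calB(-1)$ either.

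The paper's proof of (2) proceeds instead by exhibiting explicit cyclic $\calT$-generators. From $\ell_R(I^2/(\fkm I^2+QI))=1$ one writes $I^2=QI+(f)$; then $\calS_Q(I)=\calT{\cdot}\overline{ft}$ because $I^3=QI^2$ (this uses \cite[Lemma 2.1(5)]{GNO}). Choosing $g=\alpha f$ with $\alpha\in\fkm$ so that $\fkm I^2+QI=QI+(g)$, one checks $(0):_{\calT}\overline{gt}=\fkm\calT$, giving the embedded copy of $\calB(-1)$, and the quotient is again cyclic with annihilator $\fkm\calT$, hence $\calB(-1)$. Your computation that $\fkm{\cdot}[\calS_Q(I)]_1\cong (K:\fkm)/K$ has length $1$ is correct and is exactly what locates the element $g$; what is missing is tracking the full $\calT$-submodule $\calT{\cdot}\overline{gt}$ through all degrees, not just degree $1$. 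Parts (1) and (3) of your argument are fine.
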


\begin{proof}
(1) Since $\ell_R(K^2/K)=2$ by Lemma \ref{2AGL}, by Proposition \ref{2.4} (5) $R$ is not an AGL ring.

(2) We have $\fkm I^2 \not\subseteq  QI$ by Proposition \ref{2.4} (4), since $I^2/QI \cong K^2/K$. Therefore, as $\ell_R(I^2/QI) = 2$, we get $\ell_R(I^2/[\m I^2+QI]) = \ell_R([\m I^2 + QI]/QI)=1$. Let us write $I^2 = QI + (f)$ for some $f \in I^2$. Then since $\ell_R([\m I^2 + QI]/QI)=1$ and $\m I^2 +QI = QI + \m f$,  $\m I^2 +QI  = QI +(\alpha f)$ for some $\alpha \in \m$. We set $g = \alpha f$. Now remember that $\calS_Q(I) = \calT{\cdot}[\calS_Q(I)]_1 =\calT{\cdot}\overline{ft}$, because $I^3 = QI^2$ (\cite[Lemma 2.1 (5)]{GNO}), where $\overline{*}$ denotes the image in $\calS_Q(I)$. Therefore, because $(0):_{\calT}\overline{gt}  = \m \calT$, we get an exact sequence
$$0 \to \calB (-1)\xrightarrow{\varphi} \calS_Q(I) \to C \to 0$$
of graded $\calT$-modules, where $\varphi (1) = \overline{gt}$.  Let $\xi$ denote the image of $\overline{ft}$ in $C = \calS_Q(I)/\calT{\cdot}\overline{gt}$. Then $C = \calT \xi$ and $(0):_{\calT}C = \m \calT$, whence $C \cong \calB(-1)$ as a graded $\calT$-module, and the result follows.

(3) Since $[\calS_Q(I)]_1 \cong I^2/QI$ as an $R$-module, we get $\m{\cdot}\calS_Q(I) \ne (0)$.
\end{proof}

We are in a position to prove Theorem \ref{1.4}.

\begin{proof}[{Proof of Theorem $\ref{1.4}$}]
(1) $\Rightarrow$ (2)  See Theorem \ref{2.6}.

(1) $\Leftrightarrow$ (3) See Lemma \ref{2AGL}.

(3) $\Leftrightarrow$ (4) Remember that $K^{n+1}/K^n \cong I^{n+1}/QI^n$ for all $n \ge 0$.

(2) $\Rightarrow$ (1) We have $\rank~\calS_Q(I) = \ell_{\calT_\fkp} ([\calS_Q(I)] _\fkp) =2{\cdot}\ell_{\calT_\fkp} (B_\fkp) = 2$, where $\fkp = \m \calT$.
 
(1) $\Leftrightarrow$ (6) $\Leftrightarrow$ (7) See Theorem \ref{2.3} (1).

(1) $\Rightarrow$ (5) By Theorem \ref {2.6} (1) $R$ is not an $\AGL$ ring. Hence $K:\m \subsetneq K^2=S$ by Corollary \ref{2.5} (1). Because $\ell_R((K:\m)/K)=1$ and $$\ell_R(S/(K:\m))+\ell_R\left((K:\m)/K\right)=\ell_R(S/K) = 2,$$ we get $\ell_R(S/(K:\m))= 1$.

See Theorem \ref{2.6} (3) for the former part of the last assertion. To see the latter part, notice that
\begin{eqnarray*}
\ell_R(R/I^{n+1})&=&\ell_R(R/Q^{n+1}) - \left[\ell_R(I^{n+1}/Q^nI) + \ell_R(Q^nI/Q^{n+1})\right]\\
&=&\rme_0(I)\binom{n+1}{1} -\left[\ell_R([\calS_Q(I)]_n) +\ell_R(I/Q)\right]\\
&=&\rme_0(I)\binom{n+1}{1} - \left[2 + \ell_R(I/Q)\right]\\
\end{eqnarray*}
for all $n \ge 1$, where the last equality follows from the exact sequence given by Condition (2). Thus 
$$\ell_R(R/I^{n+1})=\rme_0(I)\binom{n+1}{1} - \left[\rme_0(I) -\ell_R(R/I) + 2\right]$$for all $n \ge 1$.
\end{proof}

Let us note a consequence of Theorem \ref{1.4}.

\begin{proposition}\label{3.6}
Suppose that $\rmr(R)=2$. Then the following conditions are equivalent.
\begin{enumerate}[{\rm (1)}]
\item $R$ is a $2$-$\AGL$ ring.
\item $\fkc = R : K$ and $\ell_R(K/R) = 2$.
\item $S=K^2$ and $\ell_R(K/R) = 2$. 
\end{enumerate}
When this is the case, $K/R \cong R/\fkc$ as an $R$-module. 
\end{proposition}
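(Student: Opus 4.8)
The plan is to run the cycle $(1)\Rightarrow(2)\Rightarrow(3)\Rightarrow(1)$ and then read off the supplementary assertion. For $(1)\Rightarrow(2)$, this is just Proposition~\ref{2.7} specialized to $\rmr(R)=2$: if $R$ is $2$-$\AGL$, then $\fkc=R:K$ by Proposition~\ref{2.7}~(1), and in the decomposition $K/R\cong(R/\fkc)^{\oplus\ell}\oplus(R/\fkm)^{\oplus m}$ of Proposition~\ref{2.7}~(4) the constraints $\ell+m=\rmr(R)-1=1$ and $\ell>0$ force $\ell=1$, $m=0$; hence $K/R\cong R/\fkc$ and $\ell_R(K/R)=\ell_R(R/\fkc)=2$ by Proposition~\ref{2.7}~(2). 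This already gives the final assertion of the proposition. The equivalence $(2)\Leftrightarrow(3)$ then follows at once from Proposition~\ref{2.4}~(2), which says $\fkc=R:K$ if and only if $S=K^2$, the condition $\ell_R(K/R)=2$ being shared by both.

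The substantive part is $(3)\Rightarrow(1)$. Since $\rmr(R)=2$, the ring $R$ is not Gorenstein, so $\mu_R(K/R)=\rmr(R)-1=1$ and $K/R$ is cyclic; as $1$ is part of a minimal generating set of $K$, write $K=R+Rx$. Then $K^2=R+Rx+Rx^2=K+Rx^2$, so $K^2/K$ is cyclic, generated by the image of $x^2$. From $ax\in R\Rightarrow ax^2=(ax)x\in Rx\subseteq K$ we get $\ann_R(K/R)\subseteq\ann_R(K^2/K)$, and since both quotients are cyclic, $\ell_R(K^2/K)\le\ell_R(K/R)=2$. Moreover $K\ne K^2$ (otherwise $\rmr_Q(I)\le1$ and $R$ would be Gorenstein by Proposition~\ref{2.4}~(3)), so $\ell_R(K^2/K)\ge1$. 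I next claim $R$ is not an $\AGL$ ring: if it were, then by Proposition~\ref{2.4}~(4) we would have $\fkm K^2\subseteq K$, hence $\fkm K\subseteq K:K=R$, so $K/R$ would be killed by $\fkm$ and $\ell_R(K/R)=\mu_R(K/R)=1$, contradicting $(3)$. Consequently $\fkm K^2\not\subseteq K$ by Proposition~\ref{2.4}~(4), which rules out $\ell_R(K^2/K)=1$, for a module of length $1$ is killed by $\fkm$; hence $\ell_R(K^2/K)=2$. Finally $K^2\subseteq K^3=K\cdot K^2=K\cdot S\subseteq S=K^2$ (using $S=K^2$ from $(3)$ and that $S=R[K]$ is a ring containing $K$), so $K^2=K^3$, and $R$ is $2$-$\AGL$ by Lemma~\ref{2AGL}. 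For the last statement, once $R$ is $2$-$\AGL$ we have $\fkc=R:K=\ann_R(K/R)$ (the second equality because $1\in K$), and $K/R$ is cyclic, so $K/R\cong R/\ann_R(K/R)=R/\fkc$.

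I expect the one step needing an actual idea to be the exclusion of $\ell_R(K^2/K)=1$ in $(3)\Rightarrow(1)$, i.e., ruling out that $(3)$ could be satisfied by a ring that is merely $\AGL$ rather than $2$-$\AGL$; the mechanism is that $\rmr(R)=2$ forces $K/R$ to be cyclic, so the $\AGL$ condition $\fkm K\subseteq R$ would collapse $\ell_R(K/R)$ to $1$ and contradict the hypothesis. Everything else is bookkeeping with Propositions~\ref{2.4} and \ref{2.7}, Lemma~\ref{2AGL}, and the standard identity $\mu_R(K/R)=\rmr(R)-1$.
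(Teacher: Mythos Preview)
Your argument is correct. For $(1)\Rightarrow(2)$ and $(2)\Leftrightarrow(3)$ you do exactly what the paper does. For the remaining implication, however, the paper takes a shorter path: working from $(2)$, it observes that $K/R$ is cyclic with annihilator $R:K=\fkc$, so $K/R\cong R/\fkc$ and $\ell_R(R/\fkc)=\ell_R(K/R)=2$; then it invokes Theorem~\ref{1.4} (the equivalence $(1)\Leftrightarrow(7)$, i.e.\ $R$ is $2$-$\AGL$ iff $\ell_R(R/\fkc)=2$) and is done in two lines. You instead verify the hypotheses of Lemma~\ref{2AGL} directly: you bound $\ell_R(K^2/K)$ via the annihilator inclusion $\ann_R(K/R)\subseteq\ann_R(K^2/K)$, exclude $\ell_R(K^2/K)=1$ by ruling out the $\AGL$ case, and check $K^2=K^3$ from $S=K^2$. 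This is a genuinely different route: it is longer, but more self-contained in that it does not appeal to the full characterization theorem~\ref{1.4}, only to Lemma~\ref{2AGL} and Proposition~\ref{2.4}. The paper's approach buys brevity by leaning on Theorem~\ref{1.4}; yours makes the mechanism behind $\ell_R(K^2/K)=2$ explicit.
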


\begin{proof}
(1) $\Rightarrow$ (2) By Proposition \ref{2.7} (4)  $K/R \cong R/\fkc$, since $\mu_R(K/R)=\rmr(R)-1=1$. Hence $\ell_R(K/R) = \ell_R(R/\fkc)=2$.

(2) $\Leftrightarrow$ (3) Remember that $S = K^2$ if and only if $\fkc = R:K$; see Proposition \ref{2.4} (2).

(2) $\Rightarrow$ (1)  Since $\mu_R(K/R) = 1$, $K/R \cong R/\fkc$, so that $\ell_R(R/\fkc) = \ell_R(K/R)=2$. Hence $R$ is a $2$-$\AGL$ ring by Theorem \ref{1.4}. 
\end{proof}

Let us explore the question of how the $2$-$\AGL$ property is preserved under flat base changes. Let $(R_1, \m_1)$ be a Cohen-Macaulay local ring of dimension one and let $\varphi : R \to R_1$ be a flat local homomorphism of local rings such that $R_1/\m R_1$ is a Gorenstein ring. Hence $\dim R_1/\m R_1 = 0$ and $\rmK_{R_1} \cong  R_1 \otimes_RK$ as an $R_1$-module (\cite[Satz 6.14]{HK}). Notice that $$R_1 \subseteq R_1 \otimes_RK \subseteq R_1 \otimes_R \overline{R} \subseteq \overline{R_1}$$
in $\rmQ(R_1)$. We set $K_1 = R_1 \otimes_RK$. Then $R_1$ also satisfies the conditions stated in Setting \ref{2.1} and we have following.

\begin{proposition}\label{5.1}
For each $n \ge 0$ the following assertions hold true.
\begin{enumerate}[{\rm (1)}]
\item $K_1^n = K_1^{n+1}$ if and only if $K^n = K^{n+1}$.
\item $\ell_{R_1}(K_1^{n+1}/K_1^n) = \ell_{R_1}(R_1/\m R_1){\cdot}\ell_{R}(K^{n+1}/K^n)$.
\end{enumerate}
\end{proposition}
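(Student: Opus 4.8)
The plan is to reduce both assertions to faithful flatness together with the standard behaviour of length under flat base change, the one nontrivial preliminary being that forming powers of the fractional ideal commutes with the base change. Concretely, I would first show that $K_1^{\,n}=R_1\otimes_R K^n$ holds inside $\rmQ(R_1)$ for every $n\ge 0$. Since $\varphi$ is flat, multiplication by a non-zerodivisor of $R$ remains injective after tensoring with $R_1$, so non-zerodivisors of $R$ are carried to non-zerodivisors of $R_1$ and the natural map $R_1\otimes_R\rmQ(R)\to\rmQ(R_1)$ is injective. The multiplication map $K^{\otimes_R n}\to\rmQ(R)$ has image $K^n$; tensoring with the flat module $R_1$, the image of $R_1\otimes_R K^{\otimes_R n}\to R_1\otimes_R\rmQ(R)\hookrightarrow\rmQ(R_1)$ is $R_1\otimes_R K^n$, and by definition that image is $K_1^{\,n}$. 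Because $R=K^0\subseteq K^n\subseteq K^{n+1}\subseteq S$ and $\ell_R(S/R)=\rme_1(I)<\infty$, the module $K^{n+1}/K^n$ has finite length over $R$; tensoring the exact sequence $0\to K^n\to K^{n+1}\to K^{n+1}/K^n\to 0$ with the flat module $R_1$ then yields an exact sequence
$$0\to K_1^{\,n}\to K_1^{\,n+1}\to R_1\otimes_R\bigl(K^{n+1}/K^n\bigr)\to 0 .$$

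Granting this exact sequence, both parts follow at once. A flat local homomorphism of Noetherian local rings is faithfully flat, since $R_1/\m R_1\ne(0)$; hence $R_1\otimes_R M=(0)$ if and only if $M=(0)$ for every $R$-module $M$, and applying this to $M=K^{n+1}/K^n$ gives assertion (1). For assertion (2) it suffices to recall that $\ell_{R_1}(R_1\otimes_R M)=\ell_{R_1}(R_1/\m R_1)\cdot\ell_R(M)$ for every finite-length $R$-module $M$: tensoring a composition series of $M$ (whose subquotients are $\cong R/\m$) with the flat module $R_1$ produces a filtration of $R_1\otimes_R M$ all of whose subquotients equal $R_1\otimes_R(R/\m)=R_1/\m R_1$, and the lengths multiply. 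Applying this with $M=K^{n+1}/K^n$ and using additivity of length along the displayed exact sequence gives assertion (2).

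The only step requiring genuine care is the identification $K_1^{\,n}=R_1\otimes_R K^n$ as submodules of $\rmQ(R_1)$ — that is, the compatibility of the flat base change with products of fractional ideals and with the embeddings into the total rings of fractions; everything after that is a formal consequence of faithful flatness and of the multiplicativity of length under flat base change.
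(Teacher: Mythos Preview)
Your proof is correct and follows exactly the route the paper takes: the paper's entire argument is the sentence ``the equalities follow from the isomorphisms $K_1^{n}\cong R_1\otimes_R K^n$ and $K_1^{n+1}/K_1^n\cong R_1\otimes_R(K^{n+1}/K^n)$ of $R_1$-modules,'' and you have simply supplied the details behind those isomorphisms (compatibility of products of fractional ideals with flat base change) and made explicit the faithful-flatness and length-multiplicativity facts that the paper leaves to the reader.
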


\begin{proof}
The equalities  follow from the isomorphisms $$K_1^{n} \cong R_1 \otimes_R K^n,\ \ K_1^{n+1}/K_1^n \cong R_1 \otimes_R(K^{n+1}/K^n)$$ of $R_1$-modules.
\end{proof}

We furthermore have the following.

\begin{thm}\label{5.2}
The following conditions are equivalent.
\begin{enumerate}[{\rm (1)}]
\item $R_1$ is a $2$-$\AGL$ ring.
\item
Either $(\mathrm{i})$ $R$ is an $\AGL$ ring and $\ell_{R_1}(R_1/\m R_1) = 2$ or $(\mathrm{ii})$ $R$ is a $2$-$\AGL$ ring and $\m R_1 = \m_1$.
\end{enumerate}
\end{thm}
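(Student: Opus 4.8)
The plan is to reduce Theorem~\ref{5.2} to a single numerical identity over $R$ by means of Theorem~\ref{1.4} and Proposition~\ref{5.1}, and then to extract the two alternatives in (2) by a short case analysis on the length of $R_1/\m R_1$.

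First I would use the fact, recorded just before the statement, that $R_1$ together with the fractional ideal $K_1 = R_1\otimes_RK$ again satisfies the hypotheses of Setting~\ref{2.1}. Then the equivalence (1)~$\Leftrightarrow$~(3) of Theorem~\ref{1.4}, applied to $R_1$, says that $R_1$ is $2$-$\AGL$ if and only if $K_1^2 = K_1^3$ and $\ell_{R_1}(K_1^2/K_1) = 2$. Proposition~\ref{5.1} pulls both conditions back to $R$: the first is equivalent to $K^2 = K^3$, while $\ell_{R_1}(K_1^2/K_1) = \lambda\mu$, where $\lambda := \ell_{R_1}(R_1/\m R_1) \ge 1$ and $\mu := \ell_R(K^2/K)$; here $\mu < \infty$ since $K^2 \subseteq S$ and $\ell_R(S/K) = \ell_R(R/\fkc) < \infty$ by Proposition~\ref{2.4}(1). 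Hence everything reduces to showing that $R_1$ is $2$-$\AGL$ if and only if $K^2 = K^3$ and $\lambda\mu = 2$.

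Since $\lambda$ is a positive integer and $\mu$ a nonnegative one, $\lambda\mu = 2$ forces $(\lambda,\mu) = (1,2)$ or $(\lambda,\mu) = (2,1)$, and $\lambda = 1$ holds exactly when the Artinian local ring $R_1/\m R_1$ is a field, that is, when $\m R_1 = \m_1$. In the case $(\lambda,\mu) = (1,2)$ the surviving requirement that $K^2 = K^3$ and $\ell_R(K^2/K) = 2$ is, by Lemma~\ref{2AGL}, exactly the statement that $R$ is a $2$-$\AGL$ ring; combined with $\m R_1 = \m_1$ this is alternative (ii). In the case $(\lambda,\mu) = (2,1)$ I would check that the conjunction $K^2 = K^3$ and $\ell_R(K^2/K) = 1$ is equivalent to $R$ being an $\AGL$ ring that is not Gorenstein: the implication $\Leftarrow$ is Proposition~\ref{2.4}(5) (which in addition gives $\rmr_Q(I) = 2$, hence $K^2 = K^3$), while for $\Rightarrow$ the equality $\ell_R(K^2/K) = 1$ forces $K \subsetneq K^2$, so $R$ is not Gorenstein by Proposition~\ref{2.4}(3), and also $\m K^2 \subseteq K$, so $R$ is $\AGL$ by Proposition~\ref{2.4}(4); combined with $\ell_{R_1}(R_1/\m R_1) = 2$ this is alternative (i). Assembling the two cases gives (1)~$\Leftrightarrow$~(2).

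I do not expect a genuine obstacle here: the real input is Proposition~\ref{5.1}, and the remainder is bookkeeping with characterizations already in hand. Two points nonetheless deserve attention. First, one should record why $\mu = \ell_R(K^2/K)$ is finite, so that the statement $\lambda\mu = 2$ is meaningful; this is just the $\m$-primariness of $\fkc$. Second, alternative (i) must be understood with $R$ not Gorenstein: if $R$ were Gorenstein then $K = R$, hence $K_1 = R_1$ and $R_1$ would itself be Gorenstein, not $2$-$\AGL$ --- which is consistent with the analysis, since then $\mu = 0$, so $\lambda\mu = 2$ fails and non-Gorensteinness of $R$ is in fact automatic from the equivalent numerical condition.
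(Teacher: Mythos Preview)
Your proposal is correct and follows essentially the same route as the paper: reduce via Proposition~\ref{5.1} and the characterization $K_1^2=K_1^3$, $\ell_{R_1}(K_1^2/K_1)=2$ to the factorization $\lambda\mu=2$, then split into the two cases. You are in fact more careful than the paper, which dismisses the direction (2)~$\Rightarrow$~(1) as ``now clear'' and does not comment on the Gorenstein subtlety in alternative~(i); your remarks on both points are accurate and worth keeping.
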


\begin{proof}
Suppose that $R_1$ is a $2$-$\AGL$ ring. Then $K_1^2 = K_1^3$ and $\ell_{R_1}(K_1^2/K_1) = 2$. Therefore, $K^2 = K^3$ and $\ell_{R_1}(K_1^{2}/K_1) = \ell_{R_1}(R_1/\m R_1){\cdot}\ell_{R}(K^{2}/K)=2$ by Proposition \ref{5.1}. We have $\ell_R(K^2/K)=1$ (resp. $\ell_R(K^2/K)= 2$) if $\ell_{R_1}(R_1/\m R_1)=2$ (resp. $\ell_{R_1}(R_1/\m R_1) = 1$), whence the implication (1) $\Rightarrow$ (2) follows. The reverse implication is now clear.
\end{proof}

\begin{ex}
Let $n \ge 1$ be an integer and let $R_1 = R[X]/(X^n + \alpha_1X^{n-1} + \cdots + \alpha_n)$, where $R[X]$ denotes the polynomial ring and $\alpha_i \in \m$ for all $1 \le i \le n$. Then $R_1$ is a flat local $R$-algebra with $\m_1 = \m R_1 + (x)$ (here $x$ denotes the image of $X$ in $R_1$) and $R_1/\m R_1 = (R/\m)[X]/(X^n)$ is a Gorenstein ring. Since $\ell_{R_1}(R_1 /\m R_1) = n$, taking $n=1$ (resp. $n=2$), we get $R_1$ is an $\AGL$ ring (resp. $R_1$ is a $2$-$\AGL$ ring). Notice that if $R$ is an integral domain and $0 \ne \alpha \in \m$, then $R_1=R[X]/(X^2 - \alpha X)$ is a reduced ring but not an integral domain. The ring $R_2$ of Example \ref{ex3} (1) is obtained in this manner,  taking $n=2$ and  $\alpha = t^3$, from the $\AGL$ ring $R = k[[t^3, t^4, t^5]]$.
\end{ex}

We say that $R$ has minimal multiplicity, if $\rmv(R) = \rme(R)$. When $\m$ contains a reduction $(\alpha)$, this condition is equivalent to saying that $\m^2 = \alpha \m$ (\cite{L,S2}).

\begin{prop}\label{3.7}
Suppose that $\rme(R)=3$ and $R$ has minimal multiplicity. Then $R$ is a $2$-$\AGL$ ring if and only if $\ell_R(K/R)=2$.
\end{prop}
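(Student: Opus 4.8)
The plan is to reduce, using Proposition \ref{3.6}, to the single equality $S=K^2$, and then to squeeze $S=K^2$ out of minimal multiplicity. Throughout one may assume $R/\fkm$ is infinite (replace $R$ by $R(X)=R[X]_{\fkm R[X]}$ if necessary: by Proposition \ref{5.1} and Theorem \ref{5.2} this changes neither the minimal‑multiplicity hypothesis, nor $\rme(R)$, nor $\ell_R(K/R)$, nor the $2$‑$\AGL$ property), so $\fkm$ has a minimal reduction $(\alpha)$ with $\fkm^2=\alpha\fkm$. Then $R/\alpha R$ is Artinian local of length $\rme(R)=3$ with $\overline{\fkm}^{\,2}=0$, so $\rmr(R)=\dim_{R/\fkm}(0:_{R/\alpha R}\fkm)=\dim_{R/\fkm}\overline{\fkm}=\rme(R)-1=2$. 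Hence Proposition \ref{3.6} applies: $R$ is $2$‑$\AGL$ if and only if $S=K^2$ and $\ell_R(K/R)=2$; in particular $R$ being $2$‑$\AGL$ forces $\ell_R(K/R)=2$, which gives one implication.

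Conversely, assume $\ell_R(K/R)=2$; I must show $S=K^2$. First I claim $\ell_R(K^2/K)=2$. Note $R$ is neither Gorenstein (else $K=R$) nor $\AGL$: were $R$ $\AGL$ but not Gorenstein, Theorem \ref{1.2} would give $\rme_1(I)=\rmr(R)=2$, while $\rme_1(I)=\ell_R(S/R)=\ell_R(S/K)+\ell_R(K/R)$ with $\ell_R(S/K)=1$, forcing $\ell_R(K/R)=1$, a contradiction. Since $\rmr(R)=2$ we have $\mu_R(K)=2$, so $K=R+Rf$ for some $f$; writing $\fka=R:_Rf$ we get $K/R\cong R/\fka$ and $\ell_R(R/\fka)=2$, hence $\fkm^2\subseteq\fka\subsetneq\fkm$. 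Now $\fka f\subseteq R$ gives $\fka f^2\subseteq Rf\subseteq K$, so the cyclic module $K^2/K$ (generated by the image of $f^2$) is a quotient of $R/\fka$, whence $\ell_R(K^2/K)\le 2$; on the other hand $R$ being not $\AGL$ (resp. not Gorenstein) gives $K:\fkm\subsetneq K^2$ (resp. $K:\fkm\subseteq K^2$) by Corollary \ref{2.5}(1), and since $\ell_R((K:\fkm)/K)=1$ this yields $\ell_R(K^2/K)\ge 2$. Thus $\ell_R(K^2/K)=2$, and $\{r\in R:rf^2\in K\}=\fka$. By Theorem \ref{1.4} (equivalence of (1) and (3)), it now suffices to prove $K^2=K^3$.

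To prove $K^2=K^3$: from $\ell_R(K^2/K)=2$ and Corollary \ref{2.5}(2) we get $\fkm K^2+K=K:\fkm$, and as $\ell_R((K:\fkm)/K)=1$ this forces $\fkm K^2\subseteq K:\fkm$, i.e.\ $\fkm^2K^2\subseteq K$; using $\fkm^2=\alpha\fkm$ and iterating, $\fkm^2K^{n+1}\subseteq K^{n}$ for all $n\ge 1$, so each $K^{n+1}/K^{n}$ is a cyclic $R/\fkm^2$‑module and, being a quotient of $K^2/K\cong R/\fka$, has length $\le 2$. The remaining task is to rule out $f^3\notin K^2$. Here one does a short case analysis according to whether $\alpha\in\fka$ (equivalently $\alpha f\in R$): in the case $\alpha\notin\fka$ one has $\fkm=\fka+(\alpha)$, hence $\fkm K=\fkm+R\alpha f$, and $K$, $K^2$, $K^3$ are controlled by the elements $\alpha f,\ \alpha f^2\in K:\fkm$; the hypothesis $\rme(R)=3$ — which makes $B=\fkm:\fkm=R[\fkm/\alpha]$ a one‑dimensional Cohen–Macaulay ring with $\fkm B=\alpha B$ principal and with canonical module $\fkm K$ (so that $\ell_R(K/\fkm K)=\mu_R(K)=2=\ell_R(B/R)$) — pins the configuration down tightly enough to conclude $f^3\in K^2$. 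Once $K^2=K^3$ is proved, $R$ is $2$‑$\AGL$ by Proposition \ref{3.6}. I expect this last step, showing $f^3$ cannot escape $K^2$, to be the main obstacle; everything preceding it is formal, and the content of minimal multiplicity enters only there, through the rigidity of $\mathrm{gr}_{\fkm}(R)$ (Hilbert function $1,3,3,\dots$) or, equivalently, of $B=\fkm:\fkm$.
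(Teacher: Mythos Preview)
Your reduction to proving $K^2=K^3$ is correct, and your computation that $\ell_R(K^2/K)=2$ is clean. But the proof is incomplete: you explicitly leave open the crucial step $f^3\in K^2$, writing only that you ``expect this last step\dots\ to be the main obstacle.'' The case analysis you sketch (splitting on whether $\alpha\in\fka$) is not carried out, and the vague invocation of ``rigidity of $\mathrm{gr}_{\fkm}(R)$'' or of the structure of $B=\fkm:\fkm$ is not an argument. As written, the proposal proves nothing beyond what follows formally from Proposition~\ref{3.6} once $K^2=K^3$ is granted.

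The paper dispatches this step in one line using the Eakin--Sathaye theorem (or Sally's result): choose a nonzerodivisor $b$ with $J=bK\subsetneq R$; then $\mu_R(J^3)\le\rme(R)=3$ (every ideal in a one-dimensional Cohen--Macaulay local ring is generated by at most $\rme(R)$ elements), so with $R/\fkm$ infinite there exists $c\in J$ with $J^3=cJ^2$, whence $\rmr_{(c)}(J)\le 2$ and $K^2=K^3$ by Lemma~\ref{2.2}(1) and Theorem~\ref{2.3}(2). This is where the hypothesis $\rme(R)=3$ actually bites, and it bypasses your case analysis entirely. Your intermediate computation of $\ell_R(K^2/K)=2$ is then unnecessary: once $K^2=K^3$ you have $S=K^2$, and Proposition~\ref{3.6} finishes. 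If you want to salvage your approach without citing Eakin--Sathaye, you would need to make the ``$f^3\in K^2$'' step honest; the argument you gestured at does not do this.
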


\begin{proof} Thanks to Theorem \ref{5.2}, passing to $R_1= R[X]_{\m R[X]}$ if necessary, we can assume that the residue class field $R/\m$ of $R$ is infinite. Since $\rmv(R) = \rme(R)=3$, $\rmr(R)=2$. Therefore, by Corollary  \ref{3.6} we have only to show that $S = K^2$, once $\ell_R(K/R) = 2$.  Choose a non-zerodivisor $b$ of $R$ so that $J= bK \subsetneq R$. Then, since $R/\m$ is infinite, $J$ contains an element $c$ such that $J^3 = cJ^2$ (see \cite{S1, ES}; remember that $\mu_R(J^3) \le \rme(R) = 3$). Hence $K^2 = K^3$ by Lemma \ref{2.2} (1) and Theorem \ref{2.3} (2).   
\end{proof}

We close this section with the following.

\begin{remark}\label{3.9}
Let $\rmr(R) = 2$ and assume that $R$ is a homomorphic image of a regular local ring $T$ of dimension $3$. If $R$ is a $2$-$\AGL$ ring, then   $R$ has a minimal $T$-free resolution of the form
$$0 \to T^{\oplus 2} \xrightarrow{\left[\begin{smallmatrix}
X^2&g_1\\
Y&g_2\\
Z&g_3\\
\end{smallmatrix}\right]} T^{\oplus 3} \to T \to R \to 0,$$
where $X,Y,Z$ is a regular system of parameters of $T$. In fact, let $$0 \to T^{\oplus 2} \xrightarrow{\Bbb M} T^{\oplus 3} \to T \to R \to 0$$
be a minimal $T$-free resolution of $R$. Then, since $K \cong \Ext_T^2(R,T)$, taking the $T$-dual of the resolution, we get a minimal $T$-free resolution 
$$ 0 \to T \to T^{\oplus 3} \xrightarrow{{}^t\Bbb M} T^{\oplus 2} \xrightarrow{\tau} K \to 0$$
of $K$. Because $\mu_R(K/R)=1$, without loss of generality, we may assume that $\tau (\mathbf{e}_2) = 1$, where $\mathbf{e}_2 =\left(\begin{smallmatrix}
0\\
1\\
\end{smallmatrix}\right)
$. Therefore, writing ${}^t \Bbb M =\left[\begin{smallmatrix}
f_1&f_2&f_3\\
g_1&g_2&g_3\\
\end{smallmatrix}\right]
$, we get
$$K/R \cong T/(f_1, f_2,f_3)$$ as a $T$-module. Let $C = K/R$ and $\fkq = (f_1, f_2, f_3)$. Then since $\ell_T(T/\fkq) = \ell_R(C) = 2$, after suitable elementary column-transformations in the matrix ${}^t \Bbb M$ we get that $f_1 =X^2, f_2=Y, f_3 = Z$ for some regular system $X, Y, Z$ of parameters of $T$.

The converse of the assertion in Remark \ref{3.9} is not true in general. In the case where $R$ is the semigroup ring of a numerical semigroup, we shall give in Section \ref{section5} a complete description of the assertion in terms of the matrix ${}^t\Bbb M = \left[\begin{smallmatrix}
f_1&f_2&f_3\\
g_1&g_2&g_3\\
\end{smallmatrix}\right]$ (see Theorem \ref{7.4} and its consequences).
\end{remark}


\section{$2$-$\AGL$ rings obtained by idealization}
In this section we study the problem of when the idealization $A = R \ltimes \fkc$ of $\fkc=R:S$ over $R$ is a $2$-$\AGL$ ring. To do this we need some preliminaries. For a moment let $R$ be an arbitrary commutative ring and $M$ an $R$-module. Let $A=R \ltimes M$ be the idealization of $M$ over $R$. Hence $A = R \oplus M$ as an $R$-module and the multiplication in $A$ is given  by
$$(a,x)(b,y) = (ab, bx + ay)$$
where $a,b \in R$ and $x,y \in M$.
Let $K$ be an $R$-module and set $L = \Hom_R(M,K) \oplus K$. We consider $L$ to be an $A$-module under the following action of $A$
$$(a,x)\circ (f,y) = (af, f(x) + ay),$$
where $(a,x) \in A$ and $(f,y) \in L$. Then it is standard to check that the map
$$\Hom_R(A,K) \to L, ~\alpha \mapsto (\alpha \circ j, \alpha (1))$$
is an isomorphism of $A$-modules, where $j : M \to A,~ x \mapsto (0,x)$ and $1 = (1,0)$ denotes the identity of the ring $A$.

We are now back to our Setting \ref{2.1}. Let $A = R \ltimes \fkc$ and set $L = S \times K$. Then $A$ is a one-dimensional Cohen-Macaulay local ring and $$\rmK_A = \Hom_R(A,K) \cong \Hom_R(\fkc,K) \times K \cong L=S \times K,$$
because $\fkc = K:S$ by Proposition \ref{2.4} (1). Therefore 
$$  A =R \ltimes \fkc \subseteq L = S \times K \subseteq \overline{R} \ltimes \rmQ(R).$$ 
Because $\rmQ(A) = \rmQ(R) \ltimes \rmQ(R)$ and $\overline{A}= \overline{R} \ltimes \rmQ(R)$, our idealization $A=R \ltimes \fkc$ satisfies the same assumption as in Setting \ref{2.1} and we have the following.

\begin{proposition}\label{4.1}
The following assertions hold true.
\begin{enumerate}[{\rm (1)}]
\item $L^n = S \ltimes S$ for all $n \ge 2$, whence $A[L] = S \ltimes S$.
\item $\ell_A(A[L]/L) =\ell_R(S/K)$.
\item $A : A[L] = \fkc \times \fkc$.
\item $\rmv(A) = \rmv(R) + \mu_R(\fkc)$ and $\rme(A) = 2 {\cdot}\rme(R)$.
\end{enumerate}
\end{proposition}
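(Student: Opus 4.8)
\textit{Strategy.} The plan is to carry out every computation inside $\rmQ(A) = \rmQ(R) \ltimes \rmQ(R)$, in which multiplication is $(u,v)(u',v') = (uu',\,uv'+u'v)$, and to reduce each assertion about $L = S \times K$ to a fact about the fractional ideals $S$ and $K$ that is already available. The ring-theoretic inputs I will use repeatedly are: $SK = S$ (because $S = K^n$ for $n \gg 0$ by Lemma \ref{2.2}(1), so $SK = K^{n+1} = K^n = S$); $\fkc S = \fkc$, $\fkc S \subseteq K$, and $\fkc : S = (R:S):S = R:S^2 = R:S = \fkc$ (all immediate from $\fkc = R:S = K:S$, Proposition \ref{2.4}(1)); and the containments $\fkc \subseteq R \subseteq K$ together with $R \subseteq S$, which give $A = R \ltimes \fkc \subseteq S \times K = L$.

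For (1) I first compute $L^2$: in the products $(s,k)(s',k') = (ss',\,sk'+s'k)$ the first coordinates span $S^2 = S$ and the second coordinates span $SK = S$, so $L^2 = S \ltimes S$; then for $n \ge 2$ one has $L^n = L^{n-1}L = (S \ltimes S)(S\times K)$, whose first coordinates span $S$ and whose second coordinates span $SK + S^2 = S$, so $L^n = S \ltimes S$. Since $A \subseteq L$, this gives $A[L] = \bigcup_{n \ge 1} L^n = S \ltimes S$. Assertion (2) follows at once: $A[L]/L = (S\times S)/(S\times K) \cong S/K$ via $(s,s') \mapsto s'+K$, and since $\fkc S \subseteq K$ the residual $A$-action on this quotient factors through the surjection $A \twoheadrightarrow R$, whence $\ell_A(A[L]/L) = \ell_R(S/K)$.

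For (3) I compute the conductor $A : A[L] = A : (S\ltimes S)$ directly: an element $(u,v)$ lies in it exactly when $us \in R$ and $us'+vs \in \fkc$ for all $s,s' \in S$; the first condition forces $u \in R:S = \fkc$, and specializing $s=0$ and then $s'=0$ forces $u,v \in \fkc:S = \fkc$, while conversely $u,v\in\fkc$ suffices because $\fkc S = \fkc$. Hence $A : A[L] = \fkc \times \fkc$, which does lie in $A$. For (4), the maximal ideal of $A$ is $\m_A = \m \times \fkc$ and $\m_A^2 = \m^2 \times \m\fkc$, so $\m_A/\m_A^2 \cong \m/\m^2 \oplus \fkc/\m\fkc$ as $R/\m$-vector spaces and $\rmv(A) = \rmv(R) + \mu_R(\fkc)$. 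For the multiplicity one checks $\m_A\cdot(\m A) = \m_A^2$ with $\m A = \m \times \m\fkc$, so that $\m A$ is a reduction of $\m_A$ and $\rme(A) = \rme(\m A;A)$; since $\fkc$ is $\m$-primary, $A_\fkp = R_\fkp \ltimes R_\fkp$ has length $2$ over $R_\fkp$ at each minimal prime $\fkp$ of $R$, so the associativity formula for multiplicities yields $\rme(A) = \sum_{\fkp\in\Assh R}\rme(\m;R/\fkp)\,\ell_{R_\fkp}(A_\fkp) = 2\,\rme(R)$. (Alternatively, after the faithfully flat change of rings to $R[X]_{\m R[X]}$ one picks a principal reduction $(a)$ of $\m$; then $(a,0)$ is a reduction of $\m_A$ and a nonzerodivisor on the Cohen--Macaulay ring $A$, and $\rme(A) = \ell_A(A/(a,0)A) = \ell_R(R/aR) + \ell_R(\fkc/a\fkc) = 2\,\rme(R)$.)

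None of the individual steps is deep; the part I expect to require the most care is (4), where one must keep the two idealization module structures straight and be careful to compute the multiplicity of $A$ with respect to $\m_A$ (not $\m A$) and over the correct base ring — everything else reduces mechanically to the three highlighted facts $SK = S$, $\fkc S = \fkc$, $\fkc S \subseteq K$.
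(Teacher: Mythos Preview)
Your proof is correct and follows essentially the same route as the paper's: compute $L^n$ from the idealization multiplication to get $S\ltimes S$, read off $A[L]/L\cong S/K$, verify the conductor by a direct colon computation (the paper just calls this ``straightforward''), and for (4) use $\m_A^2=\m^2\times\m\fkc$ together with the fact that $\m A$ is a reduction of $\m_A$ and $A=R\oplus\fkc$ as an $R$-module. Your argument is simply more explicit---in particular your unpacking of (3) and your associativity/additivity justification of $\rme_\m^0(A)=2\,\rme_\m^0(R)$ spell out what the paper leaves to the reader.
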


\begin{proof}
(1) Since $L^n = (S \times K)^n = S^n \times S^{n-1}K$, we have  $L^n = S \times S$  for $n \ge 2$.

(2) We get $\ell_A(A[L]/L) = \ell_R((S \oplus S)/(S \oplus K)) = \ell_R(S/K)$.

(3) This is straightforward, since $A[L] = S \ltimes S$.

(4) To see the first assertion, remember that $\m \times \fkc$ is the maximal ideal of $A$ and that
$(\m \times \fkc)^2 = \m^2 \times \m \fkc.$ For the second one, notice that $\m A$ is a reduction of $\m \times \fkc$ and that $A = R \oplus \fkc$ as an $R$-module. We then have $\rme(A) = \rme_{\m}^0(A) = 2{\cdot}\rme^0_\m(R)$.
\end{proof}

By Proposition \ref{4.1} (2) we readily get the following.

\begin{thm}\label{4.2}
$A = R \ltimes \fkc$ is a $2$-$\AGL$ ring if and only if so is $R$.
\end{thm}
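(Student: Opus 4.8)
The plan is to reduce Theorem \ref{4.2} to a length count, exactly along the lines already prepared by Proposition \ref{4.1}. The characterization of $2$-$\AGL$ rings in Theorem \ref{1.4}, specifically the equivalence $(1)\Leftrightarrow(6)$, says that a one-dimensional Cohen--Macaulay local ring satisfying the hypotheses of Setting \ref{2.1} is $2$-$\AGL$ if and only if the length of the quotient (its $S$ modulo its $K$) equals $2$. Since we have observed that $A = R\ltimes\fkc$ again satisfies the assumptions of Setting \ref{2.1}, with canonical fractional ideal $L = S\times K$ and with $A[L] = S\ltimes S$ by Proposition \ref{4.1}~(1), the criterion applies verbatim to $A$.

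The key steps, in order, are: first, invoke Proposition \ref{4.1}~(2), which gives $\ell_A(A[L]/L) = \ell_R(S/K)$; second, apply Theorem \ref{1.4}~$(1)\Leftrightarrow(6)$ to $A$ to get that $A$ is $2$-$\AGL$ iff $\ell_A(A[L]/L) = 2$; third, apply the same equivalence to $R$ to get that $R$ is $2$-$\AGL$ iff $\ell_R(S/K)=2$. Chaining these three facts yields $A$ is $2$-$\AGL$ $\iff$ $\ell_A(A[L]/L)=2 \iff \ell_R(S/K)=2 \iff R$ is $2$-$\AGL$. No further computation is needed: all the structural work (that $L$ is the canonical fractional ideal of $A$, that $A\subseteq L\subseteq\overline{A}$, and the identification $A[L]=S\ltimes S$) was carried out in the paragraph preceding Proposition \ref{4.1} and in Proposition \ref{4.1} itself.

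The only point that requires a moment's care is the implicit verification that $A = R\ltimes\fkc$ genuinely falls under Setting \ref{2.1}, so that Theorem \ref{1.4} is legitimately available for $A$. This has essentially been discharged in the text: $A$ is a one-dimensional Cohen--Macaulay local ring, $\rmK_A\cong S\times K = L$ is realized as a fractional ideal with $A\subseteq L\subseteq\overline{A}$ inside $\rmQ(A)=\rmQ(R)\ltimes\rmQ(R)$, and one needs a parameter $(a,0)$ (for a suitable non-zerodivisor $a\in\fkm$ with $aL\subsetneq A$) generating a reduction of the corresponding canonical ideal $aL$ of $A$; this is exactly the situation described by the remarks following Setting \ref{2.1}, applied to $A$. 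I expect this bookkeeping, rather than any substantive argument, to be the ``main obstacle,'' and it is already handled. The proof itself is then a one-line citation of Proposition \ref{4.1}~(2) together with the equivalence $(1)\Leftrightarrow(6)$ of Theorem \ref{1.4} applied to both $R$ and $A$.
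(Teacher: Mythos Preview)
Your proof is correct and matches the paper's approach exactly: the paper simply states that Theorem \ref{4.2} follows from Proposition \ref{4.1}~(2), which is precisely your argument via the equivalence $(1)\Leftrightarrow(6)$ of Theorem \ref{1.4} applied to both $R$ and $A$. Your additional remarks on why $A$ satisfies Setting \ref{2.1} merely make explicit what the paper records in the paragraph preceding Proposition \ref{4.1}.
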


\begin{ex}\label{ex4}
Suppose that $R$ is a $2$-$\AGL$ ring, for instance, take $R = k[[t^3, t^7, t^8]]$ (see Example \ref{ex3} (1)). We set $$
A_n =
\begin{cases} 
R & ~if~n=0,\\
A_{n-1} \ltimes \fkc_{n-1} & ~if~n \ge 1,\\
\end{cases}
$$
that is $A_0=R$ and for $n \ge 1$ let $A_n$ be the idealization of $\fkc_{n-1}$ over $A_{n-1}$, where $\fkc_{n-1} = A_{n-1} : A_{n-1}[K_{n-1}]$ and $K_{n-1}$ denotes a fractional ideal of $A_{n-1}$ such that $A_{n-1} \subseteq K_{n-1} \subseteq \overline{A_{n-1}}$ and $K_{n-1} \cong \rmK_{A_{n-1}}$ as an $A_{n-1}$-module. We then have an infinite family $\{A_n\}_{n \ge 0}$ of analytically ramified $2$-$\AGL$ rings such that $\rme(A_n) = 2^n{\cdot}\rme(R)$ for each $n \ge 0$.  Since $\fkc = t^6k[[t]] \cong k[[t]]$  for $R = k[[t^3,t^7,t^8]]$, the ring $R_3= k[[t^3,t^7,t^8]] \ltimes k[[t]]$ of Example \ref{ex3} (1) is obtained in this manner.
\end{ex}


\section{The algebra $\fkm:\fkm$}\label{section4}
We maintain Setting \ref{2.1} and set $B = \m :\m$. By \cite[Theorem 5.1]{GMP}  $B$ is a Gorenstein ring if and only if $R$ is an $\AGL$ ring of minimal multiplicity. Our purpose of this section is to explore the structure of the algebra $B = \m : \m$ in connection with the $2$-$\AGL$ property of $R$.

Let us begin with the following.

\begin{prop}\label{finalprop}
Suppose that there is an element $\alpha \in \m$ such that $\m^2 = \alpha \m$ and that $R$ is not a Gorenstein ring. Set $L = BK$. Then the following assertions hold true.
\begin{enumerate}[$(1)$]
\item $B = R: \m = \frac{\m}{\alpha}$ and $K:B = \m K$.
\item $L = K :\m$, $L \cong \m K$ as a $B$-module, and $B \subseteq L \subseteq \overline{B}$.
\item  $S = B[L]=B[K]$.
\end{enumerate}
\end{prop}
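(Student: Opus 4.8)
The plan is to work through the three assertions in order, exploiting the hypothesis $\m^2 = \alpha\m$ and the fact that, since $R$ is not Gorenstein, Lemma \ref{2.2} (1) and Proposition \ref{2.4} (3) give $K \subsetneq K^2$ and $\rmr_Q(I) \ge 2$. For assertion (1): from $\m^2 = \alpha\m$ one gets $\frac{\m}{\alpha}\cdot\m = \m$, so $\frac{\m}{\alpha} \subseteq \m:\m = B$; conversely $B\m \subseteq \m$ forces $B \subseteq \m:\m \subseteq \frac{\m}{\alpha}$ after multiplying by $\alpha$ (using that $\m \not\subseteq \alpha R$ would contradict $R$ being non-Gorenstein, or more directly that $\alpha$ is a non-zerodivisor and $\alpha B \subseteq \m$). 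The equality $B = R:\m$ follows because $R:\m \supseteq B$ always, and $R:\m \subseteq \frac{\m}{\alpha} = B$ since $x\m \subseteq R$ with $\m^2 = \alpha\m$ gives $\alpha x\m = x\m^2 \subseteq \m$, i.e. $x \in \frac1\alpha(\m:\m)$... I would instead argue $R:\m \subseteq \m:\m$ directly: $x(R:\m)\cdot\m \subseteq x R \cdot$... The cleanest route is: $(R:\m)\m \subseteq R$ is an ideal, and $(R:\m)\m = R$ would make $\m$ invertible hence $R$ a DVR (Gorenstein), so $(R:\m)\m \subseteq \m$, giving $R:\m \subseteq \m:\m = B \subseteq R:\m$. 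For $K:B = \m K$: since $B = \frac{\m}{\alpha}$, duality over the canonical module gives $K:B = K:\frac{\m}{\alpha} = \alpha(K:\m)$, and I would show $\alpha(K:\m) = \m K$ using that $K:\m = \m K:\m^2 = \m K:\alpha\m = \frac1\alpha(\m K:\m)$ together with $\m K:\m = \m K$ when... actually the identity $K:B = \m K$ is probably cleaner via $K:B = (K:K):B$-type manipulations; I would reduce it to showing $B(\m K) = \m K$ and a length/colon count, using $K:K = R$ from \cite[Bemerkung 2.5 a)]{HK}.

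For assertion (2): the key point is $L = BK = \frac{\m}{\alpha}K = \frac1\alpha(\m K)$, and I claim $\m K = \alpha(K:\m)$, whence $L = K:\m$. The inclusion $\m K \subseteq \alpha(K:\m)$ amounts to $\m\cdot\m K \subseteq \alpha K$, i.e. $\alpha\m K \subseteq \alpha K$, which is clear; the reverse $\alpha(K:\m) \subseteq \m K$ needs $\alpha(K:\m)\cdot\m \subseteq \m\cdot\alpha K$... hmm, I would instead establish $\ell_R((K:\m)/K) = 1$ (this is \cite[Satz 3.3 c)]{HK} since $K \cong \rmK_R$) and show $K \subsetneq L \subseteq K:\m$, forcing $L = K:\m$; here $K \subsetneq L$ holds because $B \supsetneq R$ (as $R$ is not a DVR), and $L \subseteq K:\m$ because $\m L = \m B K = \m K \subseteq K$ (using $\m B = \m$ from $\m^2 = \alpha\m$, more precisely $\m\cdot\frac{\m}{\alpha} = \frac{\m^2}{\alpha} = \m$). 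Then $L \cong \m K$ as a $B$-module via multiplication by $\alpha$ (an isomorphism since $\alpha$ is a non-zerodivisor and $\alpha L = \alpha B K = \m K$). Finally $B \subseteq L \subseteq \overline{B}$: the left inclusion is $1 \in K$ times $B$, and $L \subseteq \overline{B}$ because $L = BK$ with $K \subseteq \overline{R} \subseteq \overline{B}$ and $\overline{B} = \overline{R}$ is a ring containing $B$, so $BK$ lies in it.

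For assertion (3): I want $S = R[K] = B[L] = B[K]$. Clearly $B[K] \subseteq B[L] \subseteq S[L]$, and since $B \subseteq S$ and $L = BK \subseteq S$ (as $B, K \subseteq S$ and $S$ is a ring), we get $B[L] \subseteq S$. Conversely $S = R[K] \subseteq B[K]$ trivially since $R \subseteq B$. So $S = B[K] \subseteq B[L] \subseteq S$, giving all three equal. This last assertion is essentially formal once (1) and (2) are in place.

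\smallskip
\noindent The main obstacle I anticipate is assertion (1), specifically the identity $K:B = \m K$ and the careful bookkeeping of colon ideals in the fractional-ideal setting — one must be attentive that $B = \frac{\m}{\alpha}$ is itself a fractional ideal and apply canonical duality ($\rmK_R$-duality, i.e. $(-):K$ is a duality on fractional ideals, \cite[Bemerkung 2.5]{HK}) correctly, rather than naive ideal arithmetic. Once the duality $K:(K:M) = M$ and $K:K = R$ are invoked cleanly, the colon computations collapse; the rest is routine.
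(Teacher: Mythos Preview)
Your overall approach is sound and close to the paper's, but there is one genuine gap and one notable difference in method.

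\textbf{The gap (assertion (3)).} Your argument that $B[L] \subseteq S$ reads: ``$B \subseteq S$ and $L = BK \subseteq S$ (as $B, K \subseteq S$ and $S$ is a ring)''. But you never justify $B \subseteq S$; in fact you are using $B \subseteq S$ to deduce $L \subseteq S$, which is circular. The containment $B = \m:\m \subseteq R[K]$ is not automatic. The paper closes this by invoking Corollary~\ref{2.5}(1): since $R$ is not Gorenstein, $K:\m \subseteq K^2$, so from your assertion~(2) you get $B \subseteq L = K:\m \subseteq K^2 \subseteq S$, and then $B[L] \subseteq S$ follows. Once you insert this one line, your proof of (3) is complete.

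\textbf{Difference in method.} For $L = K:\m$ and $K:B = \m K$, the paper proceeds by systematic use of canonical duality rather than your length count. Concretely: from $K:K = R$ one has $K:\m K = (K:K):\m = R:\m = B$, and dualizing gives $\m K = K:(K:\m K) = K:B$; similarly $K:L = K:BK = (K:K):B = R:B$, and since $R \subsetneq B$ forces $R:B = \m$ (it is a proper ideal containing $\m$ because $\m B = \m$), dualizing yields $L = K:\m$. Your alternative for (2) --- showing $K \subsetneq L \subseteq K:\m$ via $\m B = \m$ and $B \supsetneq R$, then using $\ell_R((K:\m)/K)=1$ --- is perfectly valid and arguably more elementary, avoiding the double-dual identity. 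The paper's duality route is shorter and handles $K:B = \m K$ in one stroke, whereas your sketch of that identity in (1) wanders; you would do well to adopt the duality computation there even if you keep the length argument for (2).
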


\begin{proof} Since $R$ is not a DVR (resp. $\m^2 = \alpha\m$), we have $B = R: \m$ (resp. $B = \frac{\m}{\alpha}$). Because $K :\m K = R : \m = B$, we get $\m K = K : B$. We have $K: L = R : B = \m$, since $R \subsetneq B$. Therefore, $L = K : \m$. Clearly, $L=\frac{\m K}{\alpha} \cong \m K$ as a $B$-module. We have $S \subseteq B[K] \subseteq B[L]$. Because $B \subseteq L=K:\m \subseteq K^2$ by Corollary \ref{2.5} (1), $B[L] \subseteq S$, whence $S = B[L] = B[K]$ as claimed.
\end{proof}

We have the following.

\begin{thm}\label{thm6.2}
Suppose that $R$ is a $2$-$\AGL$ ring. Assume that there is an element $\alpha \in \m$ such that $\m^2 = \alpha \m$. Set $L = BK$. Then the following assertions hold true.
\begin{enumerate}[{\rm (1)}]
\item $\ell_R(L^2/L)=1$. 
\item Let $M = (0):_B(L^2/L)$. Then $M \in \Max B$, $R/\m \cong B/M$, and $B_M$ is an $\mathrm{AGL}$ ring which is not a Gorenstein ring. 
\item If $N \in \Max B \setminus \{M\}$, then $B_N$ is a Gorenstein ring.\end{enumerate}
Therefore, $B_N$ is an $\mathrm{AGL}$ ring for every $N \in \Max B$.
\end{thm}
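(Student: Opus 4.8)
The plan is to reduce the whole statement to an analysis of the fractional ideal $L = BK$. Since $R$ is a $2$-$\AGL$ ring it is not an $\AGL$ ring, hence not Gorenstein, by Theorem \ref{2.6} (1), so Proposition \ref{finalprop} applies and yields $B \subseteq L \subseteq \overline{B}$, $L \cong \mathfrak{m}K = K:B \cong \Hom_R(B,K) \cong \rmK_B$ as $B$-modules, and $B[L] = S$. In particular $\rmK_B = \Hom_R(B,K)$ is a canonical module of the one-dimensional Cohen--Macaulay semilocal ring $B$, so for each $N \in \Max B$ the localization $L_N$ is a canonical fractional ideal of $B_N$ with $B_N \subseteq L_N \subseteq \overline{B_N}$, and $L_N : L_N = B_N$ by \cite[Bemerkung 2.5 a)]{HK}. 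Everything then follows once we understand the module $L^2/L$.

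First I would prove (1). Because $B$ is a ring, $L^2 = B^2K^2 = BK^2$; because $R$ is $2$-$\AGL$ we have $S = K^2$ (Lemma \ref{2AGL}) and $B \subseteq L = K:\mathfrak{m} \subseteq K^2 = S$ by Corollary \ref{2.5} (1), whence $BK^2 \subseteq S\cdot S = S$ and $BK^2 \supseteq K^2 = S$; thus $L^2 = S$. Consequently $\ell_R(L^2/L) = \ell_R(S/(K:\mathfrak{m})) = 1$ by Theorem \ref{1.4} (5). This also gives $L \subsetneq L^2$ and, since $L^2 = S$ is a ring containing $1 \in L$, $L^2 = L^3$.

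Then I would turn to (2) and (3). The $R$-module $L^2/L$ has length one, hence no proper nonzero $R$-submodule, hence no proper nonzero $B$-submodule; so it is a simple $B$-module and $M := (0):_B(L^2/L)$ is a maximal ideal of $B$. As $L^2/L \cong R/\mathfrak{m}$ as an $R$-module, the ring map $B \to \End_R(L^2/L) = \End_R(R/\mathfrak{m}) = R/\mathfrak{m}$ has kernel $M$ and image containing the image of $R$, so it is surjective and $B/M \cong R/\mathfrak{m}$. Now localize. If $N \ne M$, then $(L^2/L)_N = 0$, so $L_N^2 = L_N$; since $1 \in B_N \subseteq L_N$ this makes $L_N$ a ring, forcing $L_N \subseteq L_N:L_N = B_N$, so $L_N = B_N$ and $B_N$ is Gorenstein. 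At $M$, from $ML^2 \subseteq L$ we get $(MB_M)L_M^2 \subseteq L_M$, and using $L_M:L_M = B_M$ this reads $(MB_M)L_M \subseteq B_M$, so $B_M$ is an $\AGL$ ring by the local ring $B_M$ version of Proposition \ref{2.4} (4) — whose proof uses only $L_M:L_M = B_M$ and \cite[Proposition 3.4]{GTT}, hence needs no reduction. Moreover $B_M$ is not Gorenstein: otherwise $L_M \cong B_M$ would force $L_M = B_M$ (as $B_M \subseteq L_M \subseteq \overline{B_M}$), contradicting $L_M \subsetneq L_M^2$, which holds because $(L^2/L)_M \ne 0$. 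Finally a Gorenstein ring is $\AGL$ (take $C = (0)$ in Definition \ref{1.1}), so $B_N$ is $\AGL$ for every $N \in \Max B$.

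The only place real computation enters is part (1): the identity $L^2 = S$ and the length count; parts (2) and (3) are then formal, resting on the simplicity of the $B$-module $L^2/L$ and on the reduction-free facts that the ring of multipliers of a canonical fractional ideal is the base ring and that the $\AGL$ property is equivalent to $\mathfrak{m}$ times the canonical fractional ideal being contained in that ring of multipliers. The one hygienic point to watch is that each $B_N$ should genuinely sit inside the framework of Setting \ref{2.1}; this is why I phrase (2) and (3) through those reduction-free characterizations, though one may instead first replace $R$ by $R[X]_{\mathfrak{m}R[X]}$ — still $2$-$\AGL$ by Theorem \ref{5.2}, with infinite residue field and $B$ replaced by $B\otimes_R R[X]_{\mathfrak{m}R[X]}$ — and then invoke the theory of Sections \ref{section2} and \ref{section3} directly.
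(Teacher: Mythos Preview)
Your proof is correct and follows essentially the same line as the paper's: identify $L^2=S$ and $L=K:\fkm$, read off $\ell_R(L^2/L)=1$, then localize at the maximal ideals of $B$. The only difference is cosmetic: where the paper invokes \cite[Theorem 3.16]{GMP} (via $\ell_{B_M}(L_M^2/L_M)=1$) and \cite[Theorem 3.7]{GMP} to handle the localizations, you unpack these into the reduction-free criteria $(MB_M)L_M\subseteq B_M$ and $L_N=B_N$, which has the virtue of sidestepping the question of whether each $B_N$ literally fits Setting~\ref{2.1}.
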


\begin{proof} Because $S = K^2$ and $S \supseteq L \supseteq K$ by Proposition \ref{finalprop}, we have $S = L^2$, while $\ell_R(L^2/L)=1$ as  $L = K :\m$. Hence $$0 < \ell_{B/M}(L^2/L) = \ell_B(L^2/L) \le \ell_R(L^2/L) =1,$$ so that $M \in \Max B$, $R/\m \cong B/M$, and $L^2/L \cong B/M$. Because $L\cong K:B$ as a $B$-module by Proposition \ref{finalprop}, we get $L_M \cong \rmK_{B_M}$ as a $B_M$-module (\cite[Satz 5.12]{HK}). Therefore, since $\ell_{B_M}(L_M^2/L_M) = 1$, by \cite[Theorem 3.16]{GMP} $B_M$ is an AGL ring which is not a Gorenstein ring. If $N \in \Max B$ and if $N \ne M$, then $(L^2/L)_N = (0)$, so that $B_N$ is a Gorenstein ring by \cite[Theorem 3.7]{GMP}.
\end{proof}

Let us note a few consequences of Theorem \ref{thm6.2}.

\begin{corollary}\label{6.2a} Assume that $\m^2 = \alpha \m$ for some $\alpha \in \m$ and that $B$ is a local ring with maximal ideal $\n$. Then the following conditions are equivalent.
\begin{enumerate}[{\rm (1)}]
\item $R$ is a $2$-$\AGL$ ring.
\item $B$ is a non-Gorenstein $\AGL$ ring  and $R/\m \cong B/\n$.
\end{enumerate}
When this is the case, $S$ is a Gorenstein ring, provided $\rmv(B) = \rme(B)$.
\end{corollary}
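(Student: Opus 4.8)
The plan is to deduce Corollary \ref{6.2a} directly from Theorem \ref{thm6.2} together with the characterization of AGL rings in terms of canonical ideals. Under the hypothesis that $B$ is local with maximal ideal $\n$, the statements of Theorem \ref{thm6.2}(2), (3) collapse: there is only one maximal ideal, so $M = \n$ and $B_M = B_\n = B$ (since $B$ is already local, localizing at $\n$ changes nothing). Thus the implication (1) $\Rightarrow$ (2) is immediate: if $R$ is a $2$-$\AGL$ ring, Theorem \ref{thm6.2}(2) gives $M = \n \in \Max B$, $R/\m \cong B/\n$, and $B = B_\n$ is an AGL ring which is not Gorenstein.

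For the converse (2) $\Rightarrow$ (1), I would argue as follows. Assume $B$ is a non-Gorenstein AGL ring with $R/\m \cong B/\n$. By Proposition \ref{finalprop}, $R$ is not a Gorenstein ring (note $R \subsetneq B$ forces $R$ to not be a DVR; and if $R$ were Gorenstein of minimal multiplicity it would be a DVR by Proposition \ref{2.4}(3)), so $L = BK = K:\m$ is defined and $L \cong \m K \cong K:B$ as a $B$-module, with $B \subseteq L \subseteq \overline{B}$; also $S = B[K] = B[L]$. Since $L \cong \rmK_B$ as a $B$-module (by \cite[Satz 5.12]{HK}, using $L \cong K:B$ and $K \cong \rmK_R$), and $B$ is a non-Gorenstein AGL ring, \cite[Theorem 3.16]{GMP} gives $\ell_B(L^2/L) = 1$ and $L^3 = L^2$ (i.e. $\rmr_{(\beta)}(L\text{-ideal}) = 2$). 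Because $R/\m \cong B/\n$, the $B$-length and $R$-length of $L^2/L$ agree, so $\ell_R(L^2/L) = 1$. Then $S = B[L] = L^2$ (as $L^2 = L^n$ for all $n \ge 2$), and $S = L^2 \supseteq L = K:\m \supseteq K$, so $K \subsetneq K:\m \subseteq S$ with $\ell_R(S/(K:\m)) = \ell_R(L^2/L) = 1$ and $\ell_R((K:\m)/K) = 1$; hence $\ell_R(S/K) = 2$, which by Theorem \ref{1.4}(6) means $R$ is a $2$-$\AGL$ ring.

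Finally, for the last assertion, suppose additionally that $\rmv(B) = \rme(B)$, i.e. $B$ has minimal multiplicity. Since $B$ is a non-Gorenstein AGL ring of minimal multiplicity, \cite[Theorem 5.1]{GMP} applied to $B$ says that $B_1 := \n_B : \n_B$ (the analogous extension for $B$) is a Gorenstein ring. But from $S = B[L]$ with $L = K:\m$ and the minimal-multiplicity hypothesis $\m^2 = \alpha\m$, one checks that $S$ coincides with this $B_1$; more precisely, applying Proposition \ref{finalprop} with $B$ in place of $R$ (the hypotheses transfer because $B$ has minimal multiplicity and is non-Gorenstein) identifies $S = B[L]$ with $B[BK \text{ at the level of } B] = \n_B : \n_B$, which is Gorenstein by \cite[Theorem 5.1]{GMP}. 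The main obstacle I anticipate is this last identification: one must verify carefully that the construction of $S$ from $R$ via $B$ agrees with the construction of the analogous birational extension from $B$, i.e. that $S = B[\frac{\n_B}{\alpha}]$; this uses that $S$ is independent of the choice of canonical ideal (Theorem \ref{2.3}(3)) applied over the ring $B$, together with the fact that $\rmK_B \cong L$ and $L$ is a fractional ideal of $B$ sitting between $B$ and $\overline{B}$.
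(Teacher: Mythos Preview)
Your approach is essentially the same as the paper's: $(1)\Rightarrow(2)$ comes directly from Theorem \ref{thm6.2}, and for $(2)\Rightarrow(1)$ you use Proposition \ref{finalprop} to identify $L=BK$ with a fractional canonical ideal of $B$ sitting inside $\overline{B}$, apply \cite[Theorem 3.16]{GMP} to $B$ to get $\ell_B(B[L]/L)=1$, convert this to $R$-length via $R/\m\cong B/\n$, and add $\ell_R((K:\m)/K)=1$ to obtain $\ell_R(S/K)=2$. The paper does exactly this, writing it as a single displayed equality rather than passing through $S=L^2$.

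There is one slip. Your justification that $R$ is not Gorenstein (needed for Proposition \ref{finalprop}) is incorrect: the claim ``if $R$ were Gorenstein of minimal multiplicity it would be a DVR'' is false (any ring with $\rme(R)=2$ is Gorenstein, has minimal multiplicity, and is not a DVR), and Proposition \ref{2.4}(3) does not assert this. The clean argument is: if $R$ were Gorenstein, then $R$ is AGL with minimal multiplicity, so by \cite[Theorem 5.1]{GMP} $B$ would be Gorenstein, contradicting hypothesis (2).

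For the final assertion, your instinct is right and the worry is unnecessary. Since $L\cong\rmK_B$ with $B\subseteq L\subseteq\overline{B}$, the ring $B[L]$ is exactly the ``$S$'' of the theory applied to $B$; by \cite[Theorem 3.16]{GMP} (Condition (6) there) this equals $\n:\n$ because $B$ is a non-Gorenstein AGL ring. Hence $S=B[L]=\n:\n$, and \cite[Theorem 5.1]{GMP} applied to $B$ (which is AGL with $\rmv(B)=\rme(B)$) gives that $\n:\n$ is Gorenstein. No appeal to Theorem \ref{2.3}(3) is needed.
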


\begin{proof}
By Theorem \ref{thm6.2} we have only to show the implication (2) $\Rightarrow$ (1). Let $L = BK$. Then $L = K:\m$, $L \cong \rmK_B$, and $S = B[L]$ by Proposition \ref{finalprop}. Because $B$ is a non-Gorenstein $\AGL$ ring, $\ell_B(B[L]/L)= 1$ by \cite[Theorem 3.16]{GMP}, so that $$\ell_R(S/K)=\ell_R(S/L) + \ell_R(L/K) = \ell_B(B[L]/L) + \ell_R((K : \m)/K)= 2,$$
where the second equality follows from the fact that $R/\m \cong B/\n$. Hence $R$ is a $2$-$\AGL$ ring. The last assertion is a direct consequence of \cite[Theorem 5.1]{GMP}.
\end{proof}

If $R$ is the semigroup ring of a numerical semigroup, the algebra $B = \m : \m$ is also the semigroup ring of a numerical semigroup, so that $B$ is always a local ring with $R/\m \cong  B/\n$, where $\n$ denotes the maximal ideal of $B$. Hence by Corollary \ref{6.2a} we readily get the following. Let $k[[t]]$ be the formal power series ring over a field $k$.

\begin{corollary}
Let $H=\left<a_1, a_2, \ldots, a_\ell\right>$ be a numerical semigroup and $R=k[[t^{a_1}, t^{a_2}, \ldots, t^{a_\ell}]]$ the semigroup ring of $H$. Assume that $R$ has minimal multiplicity. Then $R$ is a $2$-$\AGL$ ring if and only if $B = \m :\m$ is an $\AGL$ ring which is not a Gorenstein ring. When this is the case, $S$ is a Gorenstein ring, provided $\rmv(B) = \rme(B)$.
\end{corollary}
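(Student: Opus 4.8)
The plan is to deduce the statement directly from Corollary~\ref{6.2a}. To apply that corollary I need to check, in the present numerical-semigroup setting, that (i) $\m$ contains an element $\alpha$ with $\m^{2}=\alpha\m$, and (ii) $B=\m:\m$ is local with maximal ideal $\n$ and $B/\n\cong R/\m$; once these are in place, the asserted equivalence, as well as the clause about $S$, are precisely the content of Corollary~\ref{6.2a}.

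For (i): put $a=\rme(R)=\min(H\setminus\{0\})$, so that $\alpha=t^{a}\in\m$ and $(t^{a})$ is a minimal reduction of $\m$; indeed for a numerical semigroup ring $\ell_R(R/(t^{a}))=a=\rme(R)$, so $(t^{a})$ and $\m$ have the same multiplicity. Since $R$ has minimal multiplicity by hypothesis, the criterion recalled just before Proposition~\ref{3.7} (\cite{L,S2}) gives $\m^{2}=t^{a}\m$, and we may take $\alpha=t^{a}$.

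For (ii): $\m:\m\cong\Hom_R(\m,\m)$ is finitely generated as an $R$-module, hence it is a ring integral over $R$, so $B=\m:\m\subseteq\overline{R}=k[[t]]$. Moreover $B$ is a monomial subring: $\m$ is generated by monomials and $R$ carries the grading by the $t$-adic valuation, so an element of $\overline{R}$ multiplies $\m$ into $\m$ if and only if each of its homogeneous components does. Consequently $B=k[[H']]$, where $H'=\{\,n\in\Z_{\ge 0}\mid n+h\in H\ \text{for all}\ h\in H\setminus\{0\}\,\}$ is a sub-semigroup of $\Z_{\ge 0}$ containing $H$, hence (being cofinite) a numerical semigroup. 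In particular $B$ is a complete local domain with maximal ideal $\n$ and $B/\n\cong k\cong R/\m$.

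With (i) and (ii) established, Corollary~\ref{6.2a} applies verbatim: $R$ is a $2$-$\AGL$ ring if and only if $B$ is a non-Gorenstein $\AGL$ ring (the extra clause $R/\m\cong B/\n$ of Corollary~\ref{6.2a} being automatic here), and in that case $S$ is Gorenstein provided $\rmv(B)=\rme(B)$, by the last assertion of Corollary~\ref{6.2a} (which rests on \cite[Theorem~5.1]{GMP}). There is essentially no real obstacle: the only point requiring an argument rather than a citation is the monomiality of $\m:\m$ used in step (ii), and that is a routine consequence of the graded structure of semigroup rings.
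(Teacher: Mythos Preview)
Your proof is correct and follows essentially the same route as the paper: verify the hypotheses of Corollary~\ref{6.2a} by noting that minimal multiplicity gives $\m^{2}=t^{e}\m$ with $e=\min(H\setminus\{0\})$, and that $B=\m:\m$ is again a numerical semigroup ring, hence local with residue field $k$. The paper's own proof is a single line (``Remember that $\m^2 = t^e\m$''), relying on the sentence immediately preceding the corollary for the locality and residue-field statements about $B$; you have simply supplied the details behind those remarks.
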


\begin{proof}
Remember that $\m^2 = t^e\m$, where $e = \min\{a_i \mid 1 \le i \le \ell \}$.
\end{proof}

If $\rmv(R) < \rme(R)$, the ring $S$ is not necessarily a Gorenstein ring, even though $R$ is a $2$-$\AGL$ ring and $B$ is an $\AGL$ ring with $\rmv(B) = \rme(B)\ge 3$. Let us note one example.

\begin{Example}\label{ex1}
Let $R = k[[t^5,t^7,t^9,t^{13}]]$ and set $K =R+Rt^3$. Then we have the following.
\begin{enumerate}[{\rm (1)}]
\item $K \cong \rmK_R$ as an $R$-module and $I=t^{12}K$ is a canonical ideal of $R$ with $(t^{12})$ a reduction. Hence $\rmr(R)=2$. 
\item $S = k[[t^3, t^5, t^7]]$ and $\fkc = (t^{10})+(t^7, t^9, t^{13})$. \item[$(3)$] $R$ is a $2$-$\AGL$ ring with $\rmv(R) = 4$ and $\rme(R) = 5$. 
\item $K/R \cong R/\fkc$ as an $R$-module. 
\item $B = k[[t^5, t^7, t^8,t^9,t^{11}]]$ and $B$ is an $\AGL$ ring,  possessing minimal multiplicity $5$.
\end{enumerate}
\end{Example}

The ring $B$ does not necessarily have minimal multiplicity, even though $B$ is a local ring and $R$ is a $2$-$\AGL$ ring of minimal multiplicity. Let us note one example.

\begin{Example}\label{ex1}
Let $R = k[[t^4,t^9,t^{11},t^{14}]]$ and set $K =R+Rt^3+Rt^5$. Then we have the following.
\begin{enumerate}[{\rm (1)}]
\item $K \cong \rmK_R$ as an $R$-module and $I=t^{11}K$ is a canonical ideal of $R$ with $(t^{11})$ a reduction. Hence $\rmr(R)=3$. 
\item $R$ is a $2$-$\AGL$ ring with $\m^2 = t^4\m$.
\item $\ell_R(K/R)=3$ and $K/R \cong R/\fkc \oplus R/\m$ as an $R$-module.
\item $B = k[[t^4,t^5,t^7]]$.
\end{enumerate}
\end{Example}

In Theorem \ref{thm6.2}, if $K/R$ is a free $R/\fkc$-module, then $B = \m:\m$ is necessarily a local ring. To state the result, we need further notation.

Suppose that $R$ is a $2$-$\AGL$ ring and set $r = \rmr(R)$. Then since by Proposition \ref{2.7} (4) $$K/R \cong (R/\fkc)^{\oplus \ell}\oplus (R/\m)^{\oplus m}$$ with integers $\ell >0$ and $m \ge 0$ such that $\ell + m = r-1$, there are elements $f_1, f_2, \ldots, f_{\ell}$ and $g_1, g_2, \ldots, g_m$ of $K$ such that
$$K/R= \sum_{i = 1}^\ell R{\cdot}\overline{f_i} \bigoplus \sum_{j = 1}^m R{\cdot}\overline{g_j}, \ \ \sum_{i = 1}^\ell R{\cdot}\overline{f_i} \cong (R/\fkc)^{\oplus \ell}, \ \ and \ \ \sum_{j = 1}^m R{\cdot}\overline{g_j} \cong (R/\m)^{\oplus m}$$ where $\overline{*}$ denotes the image in $K/R$.   We set $F =\sum_{i = 1}^\ell Rf_i $ and $U = \sum_{j = 1}^m Rg_j$. Let us write $\fkc = (x_1^2) + (x_2, x_3, \ldots, x_n)$ for some minimal system $\{x_i\}_{1 \le i \le n}$ of generators of $\m$ (see Proposition \ref{2.7} (2)). With this notation we have the following.

\begin{proposition}\label{6.3} The following assertions hold true.
\begin{enumerate}[{\rm (1)}]
\item Suppose $m=0$, that is $K/R$ is a free $R/\fkc$-module. Then $B$ is a local ring with maximal ideal $\m S$ and  $R/\m \cong B/\m S$.
\item Suppose that $U^q \subseteq \m S$ for some $q > 0$. Then $B$ is a local ring. 
\end{enumerate}
\end{proposition}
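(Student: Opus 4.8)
The plan is to reduce the assertion ``$B$ is local'' to ``$B/\m S$ is local'', and then to determine $B/\m S$ concretely using the decomposition of $K/R$ given by Proposition \ref{2.7}(4). I first collect the facts I will need, all valid because $R$ is $2$-$\AGL$ and hence not Gorenstein (so not a $\mathrm{DVR}$): one has $B=\m:\m=R:\m$; $S=K^{2}$ by Lemma \ref{2AGL}; $\fkc=R:S$ with $\fkc S=\fkc$; $\m^{2}\subseteq\fkc$ because $R/\fkc$ is an Artinian local ring of length $2$ (Theorem \ref{1.4}(7)); and $R\subseteq B=R:\m\subseteq K:\m\subseteq K^{2}=S$ by Corollary \ref{2.5}(1). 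From $\m^{2}S\subseteq\fkc S=\fkc\subseteq R$ I obtain $(\m S)\m\subseteq R$, that is $\m S\subseteq R:\m=B$, so $\m S$ is an ideal of $B$; moreover $(\m S)^{2}=\m^{2}S\subseteq\m(\m S)\subseteq\m B\subseteq\rad B$, and since $\rad B$ is a radical ideal this forces $\m S\subseteq\rad B$. Hence $B$ is local if and only if the Artinian ring $B/\m S$ is local. Finally $1\notin\m S$ (Nakayama applied to the finite $R$-module $S$), so $R\cap\m S=\m$, and $\m B\subseteq\m S$, so $B/\m S$ is a finite-dimensional algebra over $R/\m$.

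The heart of the matter is the identity $B=R+U+\m S$. Since $B=R:\m$, an element $x\in K$ lies in $B$ exactly when $\m x\subseteq R$, so $(B\cap K)/R=\soc_{R}(K/R)$. By Proposition \ref{2.7}(4), $K/R\cong(R/\fkc)^{\oplus\ell}\oplus(R/\m)^{\oplus m}$, whose socle has length $\ell+m=r-1$, while $\ell_{R}(B/R)=\ell_{R}(\Ext^{1}_{R}(R/\m,R))=r$ (apply $\Hom_{R}(-,R)$ to $0\to\m\to R\to R/\m\to 0$ and use the definition of the type). Therefore $\ell_{R}\!\big(B/(B\cap K)\big)=1$, i.e.\ $B\cap K$ is a maximal $R$-submodule of $B$. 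Unravelling $K=R+F+U$, and using that the images $\overline{f_{i}}$ freely generate the $R/\fkc$-summand of $K/R$, that $\fkc:_{R}\m=\m$ (as $R/\fkc$ is Artinian local of length $2$), that $\m^{2}\subseteq\fkc$, and that $\m U\subseteq R$, one checks $B\cap K=R+\m F+U$. On the other hand $\m S\subseteq B$ but $\m S=\m K^{2}\not\subseteq K$, since $R$ is not $\AGL$ (Theorem \ref{2.6}(1) and Proposition \ref{2.4}(4)); hence $\m S\not\subseteq B\cap K$, and the maximality of $B\cap K$ in $B$ gives $B=(B\cap K)+\m S=R+\m F+U+\m S$. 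As $\m F\subseteq\m K\subseteq\m K^{2}=\m S$, this collapses to $B=R+U+\m S$.

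Now I analyse $B/\m S$. By the previous step it is spanned over $R/\m$ by $\overline 1$ and the images $\overline{g_{1}},\dots,\overline{g_{m}}$ of the generators of $U$; writing $J$ for the ideal of $B/\m S$ generated by $\overline{g_{1}},\dots,\overline{g_{m}}$, we get $B/\m S=(R/\m)\,\overline 1+J$. Under the hypothesis $U^{q}\subseteq\m S$ of part (2), each product $\overline{g_{j_{1}}}\cdots\overline{g_{j_{q}}}$ is the image of an element of $U^{q}\subseteq\m S$, hence is $0$; therefore $J^{q}=0$, so $J$ is nilpotent and $(B/\m S)/J\cong R/\m$ is a field. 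Thus $B/\m S$ is local, and consequently $B$ is local. For part (1), $m=0$ forces $U=(0)$, so $B=R+\m S$ and $B/\m S\cong R/(R\cap\m S)=R/\m$ is already a field; hence $\m S$ is a maximal ideal of $B$, and being contained in $\rad B$ it is the unique one, which gives all the assertions of (1).

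The step I expect to be the genuine obstacle is the middle paragraph: pinning down $B=\m:\m$ finely enough to read off $B=R+U+\m S$. This rests on the socle identity $(B\cap K)/R=\soc_{R}(K/R)$ together with the explicit module structure of $K/R$ from Proposition \ref{2.7}(4), and on the fact that $\m S$ escapes $K$ (which is exactly the failure of the $\AGL$ property). Once $B=R+U+\m S$ is available, the nilpotence argument for $B/\m S$ is short and purely formal, and it simultaneously handles both parts of the proposition.
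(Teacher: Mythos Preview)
Your proof is correct and follows essentially the same route as the paper: first show $\m S\subseteq\mathrm{J}(B)$, then identify $B\cap K$ with the preimage of $\soc_R(K/R)$ and use $\m S\not\subseteq K$ to obtain $B=R+U+\m S$, from which both assertions follow. Your argument is in fact slightly tidier than the paper's, since you establish $B=R+U+\m S$ uniformly for all $m$, whereas the paper writes out the analogous computation (its Claim~\ref{claim621}) only for $m=0$ and asserts the description $B/\m S=(R/\m)[\overline{g_1},\dots,\overline{g_m}]$ for part~(2) without spelling out this step.
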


\begin{proof} We divide the proof of Proposition \ref{6.3} into a few steps. Notice that $B = R:\m$, since $R$ is not a DVR.

\begin{claim}\label{claim622} The following assertions hold true.
\begin{enumerate}[{\rm (1)}]
\item $\m^2 S \subseteq R$.
\item $\m S \subseteq \rmJ(B)$, where $\rmJ(B)$ denotes the Jacobson radical of $B$.
\end{enumerate}
\end{claim}

\begin{proof}
Because $\m^2 K^3 = \m^2 K^2 \subseteq K$, we have $\m^2 K^2 \subseteq K :K = R$. Hence $\m^2 S \subseteq R$, so that $\m S \subseteq R : \m = B$. Let $M \in \Max B$ and choose $N \in \Max S$ such that $M = N \cap B$. Then because $\m S \subseteq N$, $\m S \subseteq N \cap B=M$, whence $\m S \subseteq \rmJ(B)$. 
\end{proof}

We consider Assertion (2) of Proposition \ref{6.3}. Since $g_j^q \in \m S$ for all $1 \le j \le m$, the ring $B/\m S =(R/\m)[\overline{g_1},\overline{g_2}, \ldots, \overline{g_m}]$ is a local ring, where $\overline{g_j}$ denotes the image of $g_j$ in $B/\m S$. Therefore $B$ is a local ring, since $\m S \subseteq \rmJ(B)$ by Claim \ref{claim622} (2).

To prove Assertion (1) of Proposition \ref{6.3}, we need more results.  Suppose that $m=0$; hence $U=(0)$ and $\ell = r-1$.

\begin{claim}\label{claim621} The following assertions hold true.
\begin{enumerate}[{\rm (1)}]
\item $x_1 f_i \not\in R$ for all $1 \le i \le \ell$.
\item $(R:\m) \cap K = R +x_1F$ and $\ell_R([(R:\m)\cap K]/R) = \ell$.
\item $B = R + x_1F +Rh$ for some $h \in \m K^2$.
\end{enumerate}
\end{claim}

\begin{proof}
(1) Remember that $K/R=\sum_{i = 1}^\ell R{\cdot}\overline{f_i} \cong (R/\fkc)^{\oplus \ell}$. We then have $\m \overline{f_i} \ne (0)$ but $\fkc \overline{f_i} =(0)$ for each $1 \le i \le \ell$. Hence $x_1 f_i \not\in R$, because $\fkc = (x_1^2)+ (x_j \mid 2 \le j \le n)$ and $\m = (x_1)+\fkc$.

(2) Because $(0):_{R/\fkc}\m$ is generated by the image of $x_1$, we have $$(0):_{K/R}\m = \sum_{i = 1}^\ell R{\cdot}\overline{x_1f_i},$$ whence  $(R:\m) \cap K = R +x_1F$ and $\ell_R([(R:\m)\cap K]/R) =\ell$.

(3) Notice that by Claim \ref{claim622} (1)
$$R \subseteq \m K + R \subseteq (R:\m) \cap K \subseteq R:\m$$ and that $\ell_R([(R:\m)\cap K]/R)= \ell = r-1$ by Assertion (2).  We then have
$$\ell_R((R:\m)/[(R:\m) \cap K]) = 1,$$ because $\ell_R((R:\m)/R) = r$. Hence $R:\m = [(R:\m) \cap K] + Rg$ for some $g \in R:\m$. On the other hand, because
$K \subsetneq \m K^2 + K \subsetneq K^2$ by Corollary \ref{2.5} (1) and $\ell_R(K^2/K)=2$, we get $\ell_R(K^2/(\m K^2 + K))=\ell_R((\m K^2 + K)/K)=1$. Consequently, $K^2 = K + R\xi$ for some $\xi \in K^2$. Let us write $g = \rho + \beta \xi$ with $\rho \in K$ and $\beta \in \m$. Then since $\beta \xi \in \m K^2 \subseteq R:\m$ by Claim \ref{claim622} (1) and $g \in R:\m$, we get $\rho \in (R:\m) \cap K$, whence setting $h = \beta \xi$, we have $$B = R:\m = [(R:\m)\cap K] + Rg = [(R:\m)\cap K] + Rh$$
as claimed.  
\end{proof}

Let us finish the proof of Assertion (1) of Proposition \ref{6.3}. In fact, by Claim \ref{claim621} (3) we have $B \subseteq  R + \m S$, which implies $R/\m \cong B/\m S$, whence $\m S \in \Max B$. Therefore, $B$ is a local ring with unique maximal ideal $\m S$, since $\m S \subseteq \rmJ(B)$ by Claim \ref{claim622} (2). This completes the proof of Proposition \ref{6.3}.
\end{proof}

Under some additional conditions, the converse of Theorem \ref{thm6.2} is also true.

\begin{thm}\label{final}
Suppose that the following conditions are satisfied.
\begin{enumerate}[$(1)$]
\item $\rme(R) = e \ge 3$ and $R$ is not an $\mathrm{AGL}$ ring,
\item $B$ is an $\mathrm{AGL}$ ring with $\rme(B) = e$, and
\item there is an element $\alpha \in \m$ such that $\m^2 = \alpha \m$ and $\n^2 = \alpha \n$, where $\n$ denotes the maximal ideal of $B$. 
\end{enumerate}
Then $R$ is a $2$-$\AGL$ ring and $K/R$ is a free $R/\c$-module. 
\end{thm}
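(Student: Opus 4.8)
The plan is to exploit the hypotheses to show $\ell_R(K^2/K) = 2$ and $K^2 = K^3$, which by Theorem \ref{1.4} forces $R$ to be $2$-$\AGL$, and then to locate the structure of $K/R$ by counting lengths. Set $L = BK$ as in Proposition \ref{finalprop}; since $\m^2 = \alpha\m$ and $R$ is not Gorenstein (being non-$\AGL$ by hypothesis), that proposition gives $B = R:\m = \frac{\m}{\alpha}$, $L = K:\m$, $L \cong \rmK_B$ as a $B$-module, $B \subseteq L \subseteq \overline{B}$, and $S = B[L] = B[K]$. Because $B$ is a non-Gorenstein $\AGL$ ring, Proposition \ref{2.4} (5) (applied over $B$, using $L \cong \rmK_B$) tells us $\rmr_{(\alpha)}(L) = 2$ and $\ell_B(L^2/L) = 1$; in particular $L^2 = L^3$, so $S = B[L] = L^2$.

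\textbf{Step 1: $K^2 = K^3$.} From $S = L^2$ and $K \subseteq L \subseteq S$ we get $K^3 \subseteq S = L^2 = L^3 \subseteq \dots$, but more directly: $S = B[K]$ is module-finite and $L^2 = S$ gives $K^n = S$ for $n \gg 0$; since $L = K:\m \subseteq K^2$ by Corollary \ref{2.5} (1) (as $R$ is not Gorenstein), we have $S = L^2 \subseteq K^4$, hence $K^4 = K^5 = \cdots = S$. To push this down to $K^2 = K^3$ I would use the minimal multiplicity: because $\rme(R) = e \ge 3$, after a harmless flat extension making $R/\m$ infinite (Theorem \ref{5.2}, which preserves all the relevant hypotheses since $B \otimes_R R_1 = \m_1 : \m_1$ inherits $\n_1^2 = \alpha\n_1$) the ideal $J = bK$ for suitable $b$ has $\mu_R(J^3) \le \rme(R) = 3$, so $J^3 = cJ^2$ for some $c \in J$ by the classical bound of \cite{S1, ES}; translating back via Lemma \ref{2.2} (1) and Theorem \ref{2.3} (2) yields $K^2 = K^3$. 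Then $S = K^2$.

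\textbf{Step 2: $\ell_R(K^2/K) = 2$.} Now $\ell_R(K^2/K) = \ell_R(S/K) = \ell_R(S/L) + \ell_R(L/K)$. The first term is $\ell_R(L^2/L) = \ell_R(S/L)$; since $B$ is local with $R/\m \cong B/\n$ (automatic here as $B = \frac{\m}{\alpha}$ has the same residue field — $\m/\alpha\m$ is a $1$-dimensional $R/\m$-vector space modulo... actually one must check $B$ is local; but $B$ is a birational extension inside a DVR-like structure, and $\n^2 = \alpha\n$ forces $B$ to be local of minimal multiplicity, which I'd argue directly), the $B$-length $\ell_B(L^2/L) = 1$ equals the $R$-length $\ell_R(S/L) = 1$. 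The second term $\ell_R(L/K) = \ell_R((K:\m)/K) = 1$ by \cite[Satz 3.3 c)]{HK}. Hence $\ell_R(S/K) = 2$, and with $K^2 = K^3$ from Step 1, Theorem \ref{1.4} (3) gives that $R$ is $2$-$\AGL$.

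\textbf{Step 3: $K/R$ is $R/\fkc$-free.} By Proposition \ref{2.7} (4) we have $K/R \cong (R/\fkc)^{\oplus \ell} \oplus (R/\m)^{\oplus m}$ with $\ell + m = r - 1$, $r = \rmr(R)$, and $\ell_R(K/R) = 2\ell + m$; freeness is equivalent to $m = 0$, i.e. to $\ell_R(K/R) = 2(r-1)$. I would compute $\ell_R(K/R)$ via $\ell_R(K/R) = \ell_R(S/R) - \ell_R(S/K) = \ell_R(S/R) - 2$ and $\ell_R(S/R) = \ell_R(S/B) + \ell_R(B/R)$. Here $\ell_R(B/R) = \ell_R((R:\m)/R) = r$; and $\ell_R(S/B) = \ell_B(S/B) = \ell_B(B[L]/B)$, which for the non-Gorenstein $\AGL$ ring $B$ of minimal multiplicity $e$ I would evaluate using the structure of $\AGL$ rings of minimal multiplicity (\cite[Theorem 5.1]{GMP}): there $S = B[L]$ is Gorenstein and one has an explicit length, giving $\ell_B(B[L]/B) = \rmr(B) - 1 + 1 = \rmr(B)$ (the "$+1$" being $\ell_B(B[L]/L)$). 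Comparing $\rmr(B)$ with $r$ using the exact sequences and the minimal multiplicity of both rings should yield precisely $\ell_R(K/R) = 2(r-1)$.

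\textbf{Main obstacle.} The delicate point is Step 3 — pinning down $\ell_R(K/R)$ exactly, hence the vanishing $m = 0$ — because it requires relating the Cohen-Macaulay type of $B$ to that of $R$ and using the fine structure of $\AGL$ rings of minimal multiplicity rather than just their defining property; the rest is a length bookkeeping built on Proposition \ref{finalprop} and Theorem \ref{1.4}. A secondary (but routine) point is verifying that $B$ is indeed local under hypothesis (3), so that Corollary \ref{6.2a} is applicable and the residue field comparison $R/\m \cong B/\n$ holds; this should follow from $\n^2 = \alpha\n$ together with $B \subseteq \overline R$ inside a one-dimensional setting.
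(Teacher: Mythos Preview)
Your overall strategy is sound and arrives at the same conclusion as the paper, but there are two concrete issues worth flagging, and the route differs from the paper's.

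\textbf{Step 1 is both erroneous and unnecessary.} The Eakin--Sathaye/Sally bound you invoke is the one from the proof of Proposition~\ref{3.7}, which uses $\mu_R(J^3)\le \rme(R)=3$ specifically; here the hypothesis is only $\rme(R)=e\ge 3$, and for $e>3$ the inequality $\mu_R(J^3)\le e$ does \emph{not} force $J^3=cJ^2$. Fortunately you do not need $K^2=K^3$ at all: once Step~2 gives $\ell_R(S/K)=2$, Theorem~\ref{1.4}~(6) (or Theorem~\ref{2.3}~(1)) already yields that $R$ is $2$-$\AGL$. So simply delete Step~1 and invoke condition~(6) instead of~(3).

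\textbf{The residue-field comparison and locality of $B$.} You worry about whether $B$ is local and whether $R/\m\cong B/\n$. Locality is built into hypothesis~(3), which speaks of ``the maximal ideal $\n$ of $B$''. The isomorphism $R/\m\cong B/\n$ is exactly where hypothesis~(2), $\rme(B)=e$, enters: since $\alpha$ is a parameter of both $R$ and $B$, one has $\ell_B(B/\alpha B)=\rme(B)=e=\rme(R)=\ell_R(R/\alpha R)=\ell_R(B/\alpha B)$, and comparing $B$-length with $R$-length forces $[B/\n:R/\m]=1$. With this in hand, your Step~2 is precisely the argument of Corollary~\ref{6.2a}, and your Step~3 goes through cleanly: $\rmr(R)=e-1$ and $\rmr(B)=e-1$ because both rings have minimal multiplicity $e$; then $\ell_R(B/R)=\ell_R(\m/\alpha R)=e-1$, $\ell_R(S/B)=\ell_B(S/L)+\ell_B(L/B)=1+(e-2)=e-1$, so $\ell_R(S/R)=2e-2$ and $\ell_R(K/R)=2e-4=2(r-1)$, whence $K/R$ is $R/\fkc$-free by Proposition~\ref{2.7}~(4). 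Your ``main obstacle'' is thus not an obstacle.

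\textbf{Comparison with the paper.} The paper takes a more explicit route: it writes $\m=(\alpha,x_2,\ldots,x_e)$, adjusts the $x_i$ so that $x_i/\alpha\in\n$ (using $R/\m\cong B/\n$), and from $\n^2=\alpha\n$ deduces concretely that $S=R+\sum R\cdot x_i/\alpha^2$ and $\fkc=(\alpha^2,x_2,\ldots,x_e)$, hence $\ell_R(R/\fkc)=2$. Your length-bookkeeping via $S\supseteq L\supseteq K$ and $S\supseteq B\supseteq R$ is a legitimate, more structural alternative that avoids writing down generators; the paper's approach has the advantage of exhibiting $\fkc$ explicitly, while yours makes clearer why the hypotheses on $B$ (AGL, $\rme(B)=e$, $\n^2=\alpha\n$) each contribute exactly one unit of length.
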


\begin{proof} We have $B = R :\m = \frac{\m}{a}$. Let $L = B K$. Then by Proposition \ref{finalprop} (3) $S = B[L]=B[K]$. As $B$ is an AGL ring, we have $S = \n : \n$ by \cite[Theorem 3.16]{GMP}, whence $S = \frac{\n}{\alpha}$. Consequently, $R \subseteq B= \frac{\m}{\alpha} \subseteq S= \frac{\n}{a}$. Let us write $\m = (\alpha, x_2, x_3, \ldots, x_e)$. We set $y_i = \frac{x_i}{\alpha}$ for each $2 \le i \le e$.

\begin{claim}
We can choose the elements $\{x_i\}_{2 \le i \le e}$ of $\m$ so that $y_i \in \n$ for all $2 \le i \le e$.
\end{claim}

\begin{proof}
Since by Conditions (1) and (2) $$\ell_B(B/\alpha B) = \rme(B) = e = \rme(R) = \ell_R(R/\alpha R) = \ell_R(B/\alpha B),$$ we get the isomorphism $R/\m \cong B/\n$ of fields. Let $2 \le i \le e$ be an integer and choose $c_i \in R$ so that $y_i \equiv c_i ~\mod ~\n$. Then since $y_i -c_i = \frac{x_i -\alpha c_i}{\alpha} \in \n$, replacing $x_i$ with $x_i -\alpha c_i$, we have $y_i \in \n$ for all $2 \le i \le e$.
\end{proof}

We now notice that $B =  \frac{\m}{\alpha} = R + \sum_{i = 2}^eR{\cdot}\frac{x_i}{\alpha}$ and $\frac{x_i}{\alpha}
 \in \fkn$ for each $2 \le i \le e$. We then have $$\fkn = (\fkn \cap R) + \sum_{i = 2}^eR{\cdot}\frac{x_i}{\alpha} = \fkm +\sum_{i = 2}^eR{\cdot}\frac{x_i}{\alpha} = R\alpha + \sum_{i = 2}^eR{\cdot}\frac{x_i}{\alpha},$$ where the last equality follows from the fact that $\m = R\alpha + \sum_{i = 2}^eRx_i$. Thus $$S = \frac{\n}{\alpha} = R + \sum_{i =2}^eR{\cdot}\frac{x_i}{\alpha^2},$$  whence $\alpha^2 \in \fkc$. Let $2 \le i,j \le e$ be integers. Then $x_i{\cdot}\frac{x_j}{\alpha^2} = \frac{x_i}{\alpha}{\cdot}\frac{x_j}{\alpha} \in \n^2 = \alpha \n$ and $\n = R\alpha + \sum_{i = 2}^eR{\cdot}\frac{x_i}{\alpha}$, so that  $x_i{\cdot}\frac{x_j}{\alpha^2} \in R\alpha^2 + \sum_{i=2}^eRx_i$, which shows $(\alpha^2, x_2, x_3, \ldots, x_e) \subseteq \fkc$. Therefore,  $\fkc = (\alpha^2, x_2, x_3, \ldots, x_e)$ because  $\fkc \subsetneq \m$ (remember that $R$ is not an AGL ring), whence  $\ell_R(R/\fkc) = 2$, so that  $R$ is a $2$-$\AGL$ ring. Because $S =\frac{\n}{\alpha}$ and $B = \frac{\m}{\alpha}$ and because $R/\m \cong B/\n$, we get $$\ell_R(S/B) =\ell_R(S/\n) - \ell_R(B/\n) = \ell_R(S/\alpha S) - 1 = e-1 ~~\text{and}$$
 $$\ell_R(B/R) = \ell_R(B/\fkm) - \ell_R(R/\m) = \ell_R(B/\alpha B) - 1 = e-1.$$ Therefore,  $\ell_R(S/R) = 2e-2$, whence $\ell_R(K/R) = 2e -4 = 2(e-2)$ because  $\ell_R(S/K) = 2$. Consequently,  by Proposition \ref{2.7} (4) $K/R$ is a free $R/\fkc$-module, since $\mu_R(K/R) = e-2$ (notice that $\rmr(R) = e-1$), which completes the proof of Theorem \ref{final}.
\end{proof}

However, the ring $B$ is not necessarily a local ring in general, although $R$ is a $2$-$\AGL$ ring with $\rmv(R) = \rme(R)$. Let us note one example.

\begin{ex}\label{ex}
Let $V = k[[t]]$ be the formal power series ring over an infinite field $k$. We consider the direct product $A = k[[t^3, t^7, t^8]] \times  k[[t^3, t^4, t^5]]$ of rings and set $R= k{\cdot}(1,1) + \rmJ(A)$ where $\rmJ(A)$ denotes the Jacobson radical of $A$. Then $R$ is a subring of $A$ and a one-dimensional Cohen-Macaulay complete local ring with $\rmJ(A)$ the maximal ideal. We have the ring $R$ is a $2$-$\AGL$ ring and $\rmv(R) = \rme(R) = 6$. However $$\fkm : \fkm = k[[t^3,t^4, t^5]] \times V$$ which is not a local ring, so that $K/R$ is not a free $R/\fkc$-module.
\end{ex}

\section{Numerical semigroup rings}\label{section5}

Let $k$ be a field.  In this section we study the case where $R = k[[H]]$ is the semigroup ring of a numerical semigroup $H$.  First of all, let us fix the notation, according to the terminology of numerical semigroups.

\begin{setting}\label{7.1}
Let $0 < a_1, a_2, \ldots, a_\ell \in \Bbb Z~(\ell >0)$ be positive integers such that $\mathrm{GCD}~(a_1, a_2, \ldots, a_\ell)=1$. We set $$H = \left<a_1, a_2, \ldots, a_\ell\right>=\left\{\sum_{i=1}^\ell c_ia_i \mid 0 \le c_i \in \Bbb Z~\text{for~all}~1 \le i \le \ell \right\}$$
and call it the numerical semigroup generated by the numbers $\{a_i\}_{1 \le i \le \ell}$. Let $V = k[[t]]$ be the formal power series ring over a field $k$. We set
$$R = k[[H]] = k[[t^{a_1}, t^{a_2}, \ldots, t^{a_\ell}]]$$
in $V$ and call it the semigroup ring of $H$ over $k$. The ring  $R$ is a one-dimensional Cohen-Macaulay local domain with $\overline{R} = V$ and $\m = (t^{a_1},t^{a_2}, \ldots, t^{a_\ell} )$.

We set $T = k[t^{a_1}, t^{a_2}, \ldots, t^{a_\ell}]$ in $k[t]$. Let $P = k[X_1, X_2, \ldots, X_\ell]$ be the polynomial ring over $k$. We consider  $P$ to be a $\Bbb Z$-graded ring such that $P_0 = k$ and $\deg X_i = a_i$ for  $1 \le i \le \ell$. Let $\varphi : P \to T$ denote the homomorphism of graded $k$-algebras defined by $\varphi (X_i) = t^{a_i}$ for each $1 \le i \le \ell$.  
\end{setting}

In this section we are interested in the question of when $R=k[[H]]$ is a $2$-$\AGL$ ring. To study the question, we recall some basic notion on numerical semigroups. Let $$\rmc(H) = \min \{n \in \Bbb Z \mid m \in H~\text{for~all}~m \in \Bbb Z~\text{such~that~}m \ge n\}$$ be the conductor of $H$ and set $\rmf(H) = \rmc(H) -1$. Hence $\rmf(H) = \max ~(\Bbb Z \setminus H)$, which is called the Frobenius number of $H$. Let $$\mathrm{PF}(H) = \{n \in \Bbb Z \setminus H \mid n + a_i \in H~\text{for~all}~1 \le  i \le \ell\}$$ denote the set of pseudo-Frobenius numbers of $H$. Therefore, $\rmf(H)$ equals the $\rma$-invariant of the graded $k$-algebra $k[t^{a_1}, t^{a_2}, \ldots, t^{a_\ell}]$ and $\sharp \mathrm{PF}(H) = \rmr(R)$  (\cite[Example (2.1.9), Definition (3.1.4)]{GW}).  We set  $f = \rmf(H)$ and $$K = \sum_{c \in \mathrm{PF}(H)}Rt^{f-c}$$ in $V$. Then $K$ is a fractional ideal of $R$ such that $R \subseteq K \subseteq \overline{R}$ and $$K \cong \rmK_R = \sum_{c \in \mathrm{PF}(H)}Rt^{-c}$$ as an $R$-module (\cite[Example (2.1.9)]{GW}). Let us refer to $K$ as the fractional canonical ideal of $R$. Notice that  $t^f \not\in K$ but $\m t^f \subseteq R$, whence $K:\m = K + Rt^f$.

Let us begin with the following.

\begin{thm}\label{7.2}
Suppose that $\ell \ge 3$ and $a_j \not\in \left<a_1, \ldots, \overset{\vee}{a_j}, \ldots, a_\ell \right>$ for all $1 \le j \le \ell$.  Assume that $\rmr(R)=2$ and let $K = R+Rt^a$ for some $0 < a \in \Bbb Z$.  Then the following conditions are equivalent.
\begin{enumerate}[{\rm (1)}]
\item $R$ is a $2$-$\AGL$ ring.
\item $3a \in H$ and  $f = 2a + a_i$ for some $1 \le i \le \ell$.
\end{enumerate}
\end{thm}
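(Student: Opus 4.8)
Since $\rmr(R)=2$ and $K=R+Rt^a$, we know $\ell_R(K/R)=1$; moreover $\mathrm{PF}(H)=\{c,f\}$ for some $c$ with $f-c=a$. By Proposition \ref{3.6}, $R$ is $2$-$\AGL$ if and only if $S=K^2$ and $\ell_R(K/R)=2$. But here $\ell_R(K/R)=1$, so we cannot directly apply \ref{3.6}; instead the strategy is to use Theorem \ref{1.4}(3): $R$ is $2$-$\AGL$ $\iff$ $K^2=K^3$ and $\ell_R(K^2/K)=2$. The plan is to compute the $k$-spaces $K^2$ and $K^3$ explicitly as spans of powers of $t$. First I would note $K=R+Rt^a$ forces $K^2=R+Rt^a+Rt^{2a}$ and $K^3=R+Rt^a+Rt^{2a}+Rt^{3a}$, so that $K^2=K^3$ if and only if $t^{3a}\in K^2=R+Rt^a+Rt^{2a}$, i.e. $3a\in H\cup(a+H)\cup(2a+H)$. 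The hypothesis $a_j\notin\langle a_1,\dots,\overset\vee{a_j},\dots,a_\ell\rangle$ (minimal generation) plus $\ell\ge 3$ is what will let me rule out the "extra" cases and pin down $3a\in H$.

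**Key steps.** Step 1: Record that $\mathrm{PF}(H)=\{c,f\}$, $a=f-c$, and hence $f=a+c$, with $c\notin H$, $c+a_i\in H$ for all $i$, and $2c\in H$ or $2c=f$ — actually the symmetry-type relations among pseudo-Frobenius numbers of a type-two semigroup (from Nari's work / the structure recalled before the theorem) give $c + f \notin H$ is false in general, so I would instead argue directly. Step 2: Analyze $K^2=R+Rt^a+Rt^{2a}$. Since $t^a\notin R$ but $t^{2a}$ may or may not lie in $R+Rt^a$: because $K^2/K$ is what we must make length $2$, and $K^2=K+Rt^{2a}$, the condition $\ell_R(K^2/K)=2$ will translate into: $t^{2a}\notin K$ (giving length $\ge 1$) together with a second independent element, which after using $\m t^{2a}\subseteq$ lower terms forces $t^{2a+a_i}\notin K$ for suitable $i$ while $t^{2a+a_j}\in K$ for others. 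Step 3: Translate $t^{2a}\notin K=Rt^0+Rt^a+\sum$(stuff) and the length count into the numerical conditions "$2a+a_i=f$ for some $i$" (the element $t^f$ being precisely the generator of $(K:\m)/K$, as noted $K:\m=K+Rt^f$), and "$3a\in H$" from $K^2=K^3$. Step 4: Conversely, assuming $3a\in H$ and $f=2a+a_i$, verify $K^2=K^3$ and reconstruct $\ell_R(K^2/K)=2$ by exhibiting the filtration $K\subsetneq K+Rt^f=K:\m\subsetneq K^2$, using that $t^f\in K^2$ (since $f=2a+a_i\in 2a+H$) and $t^f\notin K:\m$... wait, $t^f\notin K$ but $t^f\in K:\m$, so the chain is $K\subsetneq K:\m\subseteq K^2$ and I must show the last inclusion is strict with quotient of length $1$, i.e. $K^2=(K:\m)+R\,t^{2a}$ and $t^{2a}\notin K:\m$.

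**Main obstacle.** The delicate point is the reverse implication and the precise bookkeeping of which monomials $t^m$ lie in $K$, $K:\m$, $K^2$; in the semigroup-ring setting everything is a union of monomials, so $K=\{t^m: m\in H\cup(a+H)\}\cup\{0\}$-span, and I must show that $2a\notin H\cup(a+H)$ (so $t^{2a}\notin K$) and that $2a+a_i\notin H\cup(a+H)$ exactly when $2a+a_i=f$ — equivalently $2a+a_i\in\mathrm{PF}(H)$, hence equals $c$ or $f$; ruling out $2a+a_i=c$ uses $3a\in H$ and minimal generation. The hypothesis that each $a_j$ is a minimal generator and $\ell\ge3$ is exactly what prevents degenerate coincidences (e.g. $a$ itself being a generator, or $2a$ collapsing into $a+H$); I expect the bulk of the work to be this monomial combinatorics, checking that the two numerical conditions in (2) are together equivalent to "$K^2=K^3$ and $\ell_R(K^2/K)=2$", with Theorem \ref{1.4} doing the rest.
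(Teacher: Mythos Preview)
Your overall strategy---reduce to Theorem~\ref{1.4}(3), i.e.\ $K^2=K^3$ and $\ell_R(K^2/K)=2$, then analyze the monomial structure of $K$ and $K^2$---is exactly the paper's approach, and your filtration $K\subsetneq K:\fkm\subsetneq K^2$ for the reverse direction is equivalent to the paper's computation via $L=\fkm K^2+K$. Two points need correction.

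\textbf{A minor slip.} The claim ``$\ell_R(K/R)=1$'' is false; you are conflating it with $\mu_R(K/R)=\rmr(R)-1=1$. In fact Proposition~\ref{3.6} \emph{does} apply and gives an alternative route ($R$ is $2$-$\AGL$ iff $S=K^2$ and $\ell_R(K/R)=2$), though the paper also argues via Theorem~\ref{1.4}(3) as you propose.

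\textbf{A genuine gap.} For $(2)\Rightarrow(1)$ you correctly isolate the crux: one must show $\fkm t^{2a}\subseteq K:\fkm=K+Rt^f$, i.e.\ $t^{2a+a_j}\in K+Rt^f$ for every $j$. Your proposed mechanism---``$2a+a_j\in\mathrm{PF}(H)$, hence equals $c$ or $f$, then rule out $c$''---does not work: there is no reason a priori for $2a+a_j$ to be a pseudo-Frobenius number. The tool the paper uses, and which your sketch is missing, is the canonical duality for the fractional canonical ideal: for $m\ge 0$ one has $t^m\in K$ if and only if $f-m\notin H$. With this in hand the argument runs as follows. Suppose $t^{2a+a_j}\notin K+Rt^f$. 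Then $2a+a_j<f=2a+a_i$ (else $2a+a_j\in H$), so $a_j<a_i$; and $2a+a_j\notin H$, so by duality $t^{f-(2a+a_j)}=t^{a_i-a_j}\in K=R+Rt^a$. Hence $a_i-a_j\in H$ or $a_i-a_j-a\in H$. The second alternative gives $f=2a+a_i=3a+a_j+(a_i-a_j-a)\in H$ since $3a\in H$, absurd. The first alternative, after expanding $a_i-a_j$ in the generators (all of which are then $<a_i$), yields $a_i\in\langle a_1,\dots,\widehat{a_i},\dots,a_\ell\rangle$, contradicting minimal generation. This duality step is the heart of the proof; without it the ``monomial combinatorics'' you anticipate cannot be completed.

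For $(1)\Rightarrow(2)$ the paper makes your Step~3 precise via Corollary~\ref{2.5}(2): since $\ell_R(K^2/K)=2$ one has $\fkm K^2+K=K:\fkm=K+Rt^f$, while $\fkm K^2+K=K+\sum_j Rt^{2a+a_j}$; as $(K:\fkm)/K$ has length one, some $t^{2a+a_i}$ must equal $t^f$ modulo $K$, forcing $f=2a+a_i$.
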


\begin{proof}
(1) $\Rightarrow$ (2)
We have $t^{3a} \in K^2$ as $K^2 = K^3$. If $2a \in H$, then $K^2=K$ and $R$ is a Gorenstein ring (Proposition \ref{2.4} (3)). Hence $2a \not\in H$, so that $3a \in H$. Because $K : \m = \m K^2 + K$ by Corollary \ref{2.5} (2), we get $K + Rt^f = K:\m=K + \sum_{j=1}^\ell Rt^{2a + a_j}$. Therefore, because $\ell_R((K:\m)/K)= 1$, 
$$K + Rt^f = K + Rt^{2a+a_i}$$
for some $1 \le i \le \ell$, whence $f = 2a + a_i$.

(2) $\Rightarrow$ (1)
We get $K^3 = K^2=K + Rt^{2a}$, since $3a \in H$. Let $L = \m K^2 + K$. Hence $L = K + \sum_{j=1}^\ell Rt^{2a+a_j}$ and $L \subsetneq K^2$ because  $R$ is not a Gorenstein ring. Notice that $\ell_R(K^2/L) = 1$, since $\mu_R(K^2/K)=1$. We furthermore have the following.

\begin{claim}\label{claim7.2.1}
$L = K:\m$.
\end{claim}

\begin{proof}[Proof of Claim $\ref{claim7.2.1}$]
We have only to show $L \subseteq K:\m = K + Rt^{2a+a_i}$. Let $1 \le j \le \ell$ and assume that $t^{2a+a_j} \not\in K + Rt^{2a+a_i}$. Then $a_j < a_i$ since $f = 2a + a_i = \rmc(H) -1$. Because $2a + a_j \not\in H$, $t^{f -(2a + a_j)} = t^{a_i -a_j} \in K= R + Rt^a$. Hence $a_i - a_j \in H$ or $a_i - a_j -a \in H$. Suppose that $a_i - a_j -a \in H$. Then, setting  $h = a_i - a_j -a \in H$, we get $a_i = a_j +a + h$ whence $f = 2a + a_i = 3a + a_j + h \in H$, which is impossible. Hence  $a_i - a_j \in H$. Let us write
$$a_i - a_j = \sum_{k=1}^\ell m_k a_k$$
with $0 \le m_k \in \Bbb Z$. Then $m_k = 0$ if $a_k \ge a_i$, since $a_i - a_j < a_i$. Therefore
$$a_i = a_j + \sum_{a_k < a_i}m_ka_k \in \left<a_1, \ldots, \overset{\vee}{a_i}, \ldots, a_\ell\right>$$
which contradicts the assumption that $a_j \not\in \left<a_1, \ldots, \overset{\vee}{a_j}, \ldots, a_\ell \right>$ for all $1 \le j \le \ell$. Thus $L = K + Rt^{2a+a_i}$.
\end{proof}
We have  $\ell_R(K^2/K)= \ell_R(K^2/L) + \ell_R(L/K) = 2$ because $\ell_R(L/K)= 1$ by Claim \ref{claim7.2.1}, so that  $R$ is a $2$-$\AGL$ ring.
\end{proof}

Let us recover Example \ref{ex3} (2) in the present context.

\begin{corollary}\label{7.3}
Let $c \ge 4$ such that $c \not\equiv~0~\mod~3$ and set  $H = \left<3, c+3, 2c\right>$. Then $R$ is a $2$-$\AGL$ ring such that $\rmr(R) = 2$ and $K/R \cong R/\fkc$ as an $R$-module.
\end{corollary}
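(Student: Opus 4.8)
The plan is to read off the numerical data of $H=\left<3,c+3,2c\right>$ and then apply Theorem~\ref{7.2}. First I would check the standing hypotheses of Theorem~\ref{7.2}: here $\ell=3\ge 3$, and since $c\ge 4$ and $c\not\equiv 0\pmod 3$ a short case analysis shows that none of $3$, $c+3$, $2c$ belongs to the numerical semigroup generated by the other two (for instance $c+3\in\left<3,2c\right>$ would force $3\mid c$, after noting $c+3<2c$, and the remaining cases are similar); in particular $\GCD(3,c+3,2c)=1$, so $H$ together with the system $3,c+3,2c$ satisfies the assumptions of Theorem~\ref{7.2}.

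Next I would determine $\rmf(H)$ and $\mathrm{PF}(H)$. Since $c$ and $2c$ represent the two nonzero residue classes modulo $3$, working one class at a time gives
$$H=3\Bbb Z_{\ge 0}\ \cup\ \bigl((c+3)+3\Bbb Z_{\ge 0}\bigr)\ \cup\ \bigl(2c+3\Bbb Z_{\ge 0}\bigr).$$
Hence the maximal gap in each nonzero class is $c$ and $2c-3$ respectively, and these are the only gaps $n$ with $n+3\in H$; checking that $c+(c+3),\,c+2c,\,(2c-3)+(c+3),\,(2c-3)+2c$ all lie in $H$ then yields $\mathrm{PF}(H)=\{c,\,2c-3\}$, so $\rmf(H)=2c-3$ (using $c\ge 4$, which makes $2c-3>c$) and $\rmr(R)=\sharp\,\mathrm{PF}(H)=2$. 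Consequently the fractional canonical ideal is $K=Rt^{\rmf(H)-c}+Rt^{\rmf(H)-(2c-3)}=R+Rt^{c-3}$, i.e. $K=R+Rt^{a}$ with $a=c-3>0$, exactly the shape required in Theorem~\ref{7.2}.

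It then remains to verify condition (2) of Theorem~\ref{7.2}: $3a=3(c-3)$ is a positive multiple of $3$, hence $3a\in H$, and $2a+a_1=2(c-3)+3=2c-3=\rmf(H)$. Therefore $R$ is a $2$-$\AGL$ ring by Theorem~\ref{7.2}, and since $\rmr(R)=2$ the final assertion of Proposition~\ref{3.6} (equivalently Proposition~\ref{2.7}~(4), which here forces $\ell=1$, $m=0$) gives $K/R\cong R/\fkc$. The only step that is not mere bookkeeping is the computation of $\mathrm{PF}(H)$: one must confirm that in each nonzero residue class the least positive element of $H$ is the displayed generator (so that smaller combinations such as $4c$ or $2(c+3)$ are excluded), then identify the top gap in each class, and finally run the pseudo-Frobenius test against all three generators; the inequality $c\ge 4$ is precisely what makes $2c-3$ rather than $c$ the Frobenius number. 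I expect this to be the main point requiring care, the rest being a direct appeal to Theorem~\ref{7.2} and Proposition~\ref{3.6}.
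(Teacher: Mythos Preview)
Your proof is correct and follows exactly the route the paper takes: set $a=c-3$, observe that $K=R+Rt^{a}$, $f=2a+3$, and $3a\in H$, then invoke Theorem~\ref{7.2} and Proposition~\ref{3.6}. The paper's proof is a one-line version of yours, simply recording these identities without the supporting verifications you (correctly) supply.
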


\begin{proof}
We set $a = c-3$. Then $f = 2a + 3$ and $K = R + Rt^{a}$.
\end{proof}

Suppose that $\ell = 3$. We set $\fka = \Ker \varphi$, where $$\varphi : P =k[X_1, X_2, X_3] \to T= k[t^{a_1}, t^{a_2}, t^{a_3}]$$ is the homomorphism of $k$-algebras defined by $\varphi(X_i) = t^{a_i}$ for $i=1,2,3$. Let us write  $X = X_1$, $Y=X_2$, and $Z = X_3$ for short. If $T$ is not a Gorenstein ring, then  by \cite{H} it is known that $\fka = \rmI_2\left(\begin{smallmatrix}
X^\alpha&Y^\beta&Z^\gamma\\
Y^{\beta'}&Z^{\gamma'}&X^{\alpha'}\\
\end{smallmatrix}
\right)$ for some integers $\alpha, \beta, \gamma, \alpha', \beta', \gamma' > 0$, where $ \rmI_2\left(\begin{smallmatrix}
X^\alpha&Y^\beta&Z^\gamma\\
Y^{\beta'}&Z^{\gamma'}&X^{\alpha'}\\
\end{smallmatrix}
\right)$ denotes the ideal of $P$ generated by the $2 \times 2$ minors of the matrix $\left(\begin{smallmatrix}
X^\alpha&Y^\beta&Z^\gamma\\
Y^{\beta'}&Z^{\gamma'}&X^{\alpha'}\\
\end{smallmatrix}
\right)$.  With this notation we have the following.

\begin{thm}\label{7.4} Suppose that $H$ is $3$-generated, that is $\ell = 3$. Then the following conditions are equivalent.
\begin{enumerate}[{\rm (1)}]
\item $R$ is a $2$-$\AGL$ ring.
\item After a suitable permutation of $a_1, a_2, a_3$, $\fka = \rmI_2\left(\begin{smallmatrix}
X^2&Y&Z\\
Y^{\beta'}&Z^{\gamma'}&X^{\alpha'}\\
\end{smallmatrix}
\right)$ for some integers $\alpha', \beta', \gamma'$ such that $\alpha' \ge 2$ and $\beta', \gamma' >0$.
\end{enumerate}
\end{thm}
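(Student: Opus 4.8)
The plan is to exploit the structure theorem for $3$-generated numerical semigroup rings (the Herzog presentation via $2\times 2$ minors of a $2\times 3$ matrix with monomial entries) together with the already-established characterizations of the $2$-$\AGL$ property, in particular the description of $K/R$ from Proposition~\ref{2.7}~(4) and the matrix description sketched in Remark~\ref{3.9}. First I would recall that when $T=k[t^{a_1},t^{a_2},t^{a_3}]$ is not Gorenstein, the defining ideal $\fka=\Ker\varphi$ has the form $\rmI_2\left(\begin{smallmatrix} X^\alpha&Y^\beta&Z^\gamma\\ Y^{\beta'}&Z^{\gamma'}&X^{\alpha'}\end{smallmatrix}\right)$ by \cite{H}, so $R=k[[H]]$ has Cohen--Macaulay type $2$ automatically (the Hilbert--Burch matrix is $2\times 3$), and hence $\rmr(R)=2$. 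This lets me apply Proposition~\ref{3.6}: $R$ is $2$-$\AGL$ if and only if $S=K^2$ and $\ell_R(K/R)=2$, and in that case $K/R\cong R/\fkc$.

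\smallskip
\noindent For the implication $(1)\Rightarrow(2)$, I would argue as follows. Assume $R$ is $2$-$\AGL$. Since $R$ is a homomorphic image of the regular local ring $T=k[[X,Y,Z]]$ of dimension $3$ and $\rmr(R)=2$, Remark~\ref{3.9} applies: $R$ has a minimal $T$-free resolution $0\to T^{\oplus 2}\xrightarrow{\Bbb M}T^{\oplus 3}\to T\to R\to 0$ whose transpose, after elementary column operations, has first row $(X^2,Y,Z)$ for a regular system of parameters; but since we are in the graded/monomial setting the matrix entries must be monomials and $X,Y,Z$ must be (a permutation of) the canonical generators $t^{a_i}$. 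Comparing with Herzog's form of $\fka$, the exponents $\alpha,\beta,\gamma$ on the ``first row" of the Herzog matrix must (after permutation of $a_1,a_2,a_3$) satisfy $\alpha=2$, $\beta=\gamma=1$. The constraints $\alpha'\ge 2$ and $\beta',\gamma'>0$ then come from the requirement that the presentation be minimal: if any of $\beta',\gamma'$ were zero the matrix would degenerate, and $\alpha'=1$ would force $\fka$ to contain a linear form, contradicting that $T$ has embedding dimension $3$; more precisely, $\alpha'=1$ together with $\alpha=2$ would make $R$ Gorenstein or of smaller type. The cleanest route here is actually to translate everything back to the semigroup: by Herzog's description the three relations read $a_1\alpha=a_2\beta'+\cdots$ etc., and one checks directly using $\rmf(H)$ and $\mathrm{PF}(H)=\{\,?\,\}$ that the $2$-$\AGL$ condition $\ell_R(K/R)=2$ with $K/R\cong R/\fkc$ is equivalent to ``one entry in the top row of the Herzog matrix has exponent $2$ and the other two have exponent $1$."

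\smallskip
\noindent For $(2)\Rightarrow(1)$, I would start from $\fka=\rmI_2\left(\begin{smallmatrix} X^2&Y&Z\\ Y^{\beta'}&Z^{\gamma'}&X^{\alpha'}\end{smallmatrix}\right)$ and compute explicitly. The three defining binomials are $X^2 Z^{\gamma'}-Y^{\beta'+1}$, $XZ^{\gamma'+?}\ldots$ — more carefully, $g_1=X^2 Z^{\gamma'}-Y^{1+\beta'}$, $g_2=X^{2+\alpha'}-YZ$ (wait: this is $X^2\cdot X^{\alpha'}-Y\cdot Z$), and $g_3=X^{\alpha'}Y^{\beta'}-Z^{1+\gamma'}$. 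From these I can read off the semigroup: $2a_1=?$, specifically $2a_1+\alpha' a_1=a_2+a_3$ gives $a_2+a_3=(2+\alpha')a_1$; $2a_1+\gamma' a_3=(1+\beta')a_2$; and $\alpha' a_1+\beta' a_2=(1+\gamma')a_3$. From the Hilbert--Burch structure the canonical module / type-$2$ data give $\mathrm{PF}(H)$, and one computes $\rmf(H)$ and checks $K=R+Rt^a$ with a specific $a$ (read off from the resolution's degrees), then verifies $3a\in H$ and $f=2a+a_i$ for a suitable $i$; Theorem~\ref{7.2} then immediately yields that $R$ is $2$-$\AGL$. Alternatively and more robustly, I would compute $\fkc=R:S$ directly in terms of exponents and check $\ell_R(R/\fkc)=2$, invoking Theorem~\ref{1.4}~(7). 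The main obstacle I anticipate is the bookkeeping in matching Herzog's presentation matrix to the transpose Hilbert--Burch matrix of Remark~\ref{3.9} up to the permutation of $a_1,a_2,a_3$ and up to elementary row/column operations: one must be careful that ``exponent $2$ in the top row and $1$'s elsewhere" is genuinely permutation-invariant and is not spuriously created or destroyed by column operations that mix monomial and non-monomial entries. Handling the degenerate boundary cases $\alpha'=2$ versus $\alpha'>2$, and ruling out the Gorenstein/$\AGL$ cases (which would correspond to different exponent patterns and are excluded by Proposition~\ref{2.4}~(3) and Theorem~\ref{1.2}), is where the argument needs the most care.
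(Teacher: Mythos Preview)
Your overall strategy for $(1)\Rightarrow(2)$ is right in spirit---use $\ell_R(K/R)=2$ to pin down the top-row exponents---but the paper gets there via Proposition~\ref{7.5}, namely $\ell_R(K/R)=\alpha\beta\gamma$, which immediately forces $\{\alpha,\beta,\gamma\}=\{2,1,1\}$ after permutation. Your route through Remark~\ref{3.9} reaches the same conclusion but with more work matching the Hilbert--Burch matrix to Herzog's monomial form.

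The genuine gap is your argument for $\alpha'\ge 2$. Neither of your proposed reasons is correct. With $\alpha=2$, $\beta=\gamma=1$ and $\alpha'=1$, the minor $X^{\alpha+\alpha'}-Y^{\beta'}Z^{\gamma}=X^{3}-Y^{\beta'}Z$ is not linear, the presentation is still minimal, and $R$ is still a non-Gorenstein ring of type~$2$; nothing degenerates. The paper's argument (carried out in Theorem~\ref{7.6}) is arithmetical: from the graded degrees one has the equations $a=a_2\beta'-2a_1=a_3\gamma'-a_2=a_1\alpha'-a_3$, where $a=n-m$ and $K=R+Rt^{a}$. If $R$ is $2$-$\AGL$ then $2a\notin H$ but $K^2=K^3$, so $3a\in H$; one then shows $f=2a+a_1$ by analysing $\fkm K^2+K$. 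Now if $\alpha'=1$, the third equation gives $a_1=a+a_3$, whence $f=2a+a_1=3a+a_3\in H$, contradicting $f\notin H$. This is the step you are missing, and it cannot be replaced by a minimality or embedding-dimension argument.

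For $(2)\Rightarrow(1)$ your plan is essentially the paper's: use the degree equations to verify $3a\in H$ (this is exactly where $\alpha'\ge 2$ is used, since $3a=a_2(\beta'-1)+a_1(\alpha'-2)+a_3(\gamma'-1)$), then show $(t^{2a_1})+(t^{a_2},t^{a_3})\subseteq\fkc$ and $t^{a_1}\notin\fkc$, giving $\ell_R(R/\fkc)=2$. Your binomial bookkeeping is garbled (for instance your ``$g_2$'' should be $X^{2+\alpha'}-Y^{\beta'}Z$, not $X^{2+\alpha'}-YZ$), but once the equations $(E)$ above are written down correctly the verification is routine. Invoking Theorem~\ref{7.2} as an alternative also works, since one checks $f=2a+a_1$ along the way.
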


To prove Theorem \ref{7.4}, we need a result of \cite[Section 4]{GMP}. Throughout, let $H = \left<a_1, a_2, a_3\right>$ and assume that $T$ is not a Gorenstein ring. Hence the ideal $\fka$ is generated by the $2 \times 2$ minors of the matrix  
$$\begin{pmatrix}   
X^{\alpha} & Y^{\beta} & Z^{\gamma} \\
Y^{\beta'} & Z^{\gamma'} & X^{\alpha'}
\end{pmatrix}$$
where $0< \alpha, \beta, \gamma, \alpha', \beta', \gamma'\in \mathbb Z$. Let $\Delta_1 = Z^{\gamma+\gamma'}-X^{\alpha'}Y^{\beta}$, $\Delta_2=X^{\alpha+\alpha'}-Y^{\beta'}Z^{\gamma}$, and $\Delta_3=Y^{\beta+\beta'}-X^{\alpha}Z^{\gamma'}$. Then $\fka  = (\Delta_1,\Delta_2, \Delta_3)$ and thanks to the theorem of Hilbert--Burch (\cite[Theorem 20.15]{E}), the graded ring $T=P/\fka$ possesses a graded minimal $P$-free resolution of the  form
$$0\longrightarrow \begin{matrix} P(-m )\\ \oplus\\ P(-n)\end{matrix} \overset{\left[ \begin{smallmatrix}
X^{\alpha} & Y^{\beta'}\\
Y^{\beta} & Z^{\gamma'}\\
Z^{\gamma} & X^{\alpha'}
\end{smallmatrix} \right]}{\longrightarrow  } \begin{matrix} P(-d_1)\\ \oplus\\ P(-d_2)\\ \oplus\\ P(-d_3)\end{matrix} \overset{\left[\Delta_1~\Delta_2~\Delta_3\right]}{\longrightarrow } P\overset{\varepsilon }{\longrightarrow } T \longrightarrow 0,$$
where $d_1 = \deg\Delta_1 = a_3(\gamma + \gamma')$, $d_2 = \deg\Delta_2 = a_1(\alpha + \alpha')$, $d_3 = \deg\Delta_3 = a_2(\beta + \beta')$, $m = a_1\alpha + d_1 = a_2\beta + d_2 = a_3\gamma + d_3$, and $n = a_1\alpha' + d_3 = a_2\beta' + d_1 = a_3\gamma' + d_2$. Therefore $$(E)\ \ \ n - m   = a_2\beta' -a_1\alpha = a_3\gamma' - a_2\beta = a_1\alpha' -a_3\gamma.$$ Let $\rmK_P = P(-d)$ denote the graded canonical module of $P$ where $d=a_1 + a_2 + a_3$. Then, taking the $\rmK_P$--dual of the above resolution, we get the  minimal presentation
\begin{equation*}
\begin{matrix} P(d_1-d)\\ \oplus\\ P(d_2-d)\\ \oplus\\ P(d_3-d)\end{matrix}\overset{\left[ \begin{smallmatrix} X^{\alpha} & Y^{\beta} & Z^{\gamma} \\
Y^{\beta'} & Z^{\gamma'} & X^{\alpha'}
\end{smallmatrix} \right]}{\longrightarrow }\begin{matrix} P(m  -d)\\ \oplus\\ P(n-d)\end{matrix}\overset{\varepsilon }{\longrightarrow }\rmK_{T}\longrightarrow 0 \tag{$\sharp$}
\end{equation*}
of the graded canonical module $\rmK_{T} = \Ext_P^2(T,\rmK_P)$ of $T$. Therefore, because $\rmK_T = \sum_{c \in \mathrm{PF}(H)}Tt^{-c}$ (\cite[Example (2.1.9)]{GW}), we have $\ell_k([\rmK_T]_i) \le 1$ for all $i \in \Bbb Z$, whence $m \ne n$. After the permutation of $a_2$ and $a_3$ if necessary, we may assume without loss of generality that $m < n$. Then the presentation $(\sharp)$ shows that $\mathrm{PF}(H) = \{m-d, n-d\}$ and $f = n-d$.

We set $a = n-m$. Hence  $a > 0$, $f = a + (m-d)$, and $K = R + Rt^a$. With this notation we have the following. Remember that $R$ is the $MT_M$-adic completion of the local ring $T_M$, where $M= (t^{a_i}\mid i = 1,2,3)$ denotes the graded maximal ideal of $T$.

\begin{prop}[{\cite[Theorem 4.1]{GMP}}]\label{7.5}
$\ell_R(K/R)=\alpha\beta\gamma$.
\end{prop}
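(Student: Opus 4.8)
The plan is to reduce the assertion to a purely combinatorial count in the numerical semigroup $H$, and then to evaluate that count using the Ap\'ery set of $H$ with respect to $a_1$, whose structure is made completely transparent by the Herzog matrix.

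First I would observe that $R$, $K = R + Rt^a$ and $\overline{R} = V = k[[t]]$ are all spanned over $k$ by powers of $t$, corresponding respectively to $H$, to $H \cup (H + a)$, and to $\Bbb N$; hence $\ell_R(K/R) = \#\big((H + a)\setminus H\big) = \#\{h \in H \mid h + a \notin H\}$. By the relation $(E)$ recorded above we may take $a = n - m = \beta' a_2 - \alpha a_1$.

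Next I would describe the Ap\'ery set $\mathrm{Ap} := \mathrm{Ap}(H, a_1) = \{w \in H \mid w - a_1 \notin H\}$. Reducing the defining equations $\Delta_1,\Delta_2,\Delta_3$ modulo $X$ (all six exponents are positive, so the terms $X^{\alpha'}Y^\beta$, $Y^{\beta'}Z^\gamma$, $X^\alpha Z^{\gamma'}$ behave as expected) gives $T/(t^{a_1}) \cong k[Y,Z]/(Z^{\gamma+\gamma'},\, Y^{\beta'}Z^\gamma,\, Y^{\beta+\beta'})$, whose monomial $k$-basis identifies
$$\mathrm{Ap} = \big\{\, j a_2 + k a_3 \ \big|\ 0 \le j < \beta+\beta',\ 0 \le k < \gamma+\gamma',\ \text{not } (j \ge \beta' \text{ and } k \ge \gamma)\,\big\}.$$
Every $h \in H$ is uniquely $h = q a_1 + w$ with $q \ge 0$ and $w \in \mathrm{Ap}$, and for $v \in H$ and $q \in \Bbb Z$ one has $q a_1 + v \in H \iff q + \rho(v) \ge 0$, where $\rho(v) = \max\{p \ge 0 \mid v - p a_1 \in H\}$. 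Writing $h + a = (q - \alpha) a_1 + (w + \beta' a_2)$ with $w + \beta' a_2 \in H$, this gives $h + a \notin H \iff q \le \alpha - \rho(w + \beta' a_2) - 1$, so that
$$\ell_R(K/R) = \sum_{w \in \mathrm{Ap}} \max\{0,\ \alpha - \rho(w + \beta' a_2)\}.$$

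Finally I would evaluate this sum by splitting $\mathrm{Ap}$ according to $w = j a_2 + k a_3$. If $j < \beta$ and $k < \gamma$, then the exponent $(j + \beta', k)$ still lies in the index set above, so $w + \beta' a_2 \in \mathrm{Ap}$, $\rho(w + \beta' a_2) = 0$, and the term equals $\alpha$; there are exactly $\beta\gamma$ such $w$, contributing $\alpha\beta\gamma$. If instead $j \ge \beta$, then using $(\beta+\beta') a_2 = \alpha a_1 + \gamma' a_3$ (from $\Delta_3$) one gets $w + \beta' a_2 = \alpha a_1 + \big((j-\beta) a_2 + (k+\gamma') a_3\big)$, so $\rho(w + \beta' a_2) \ge \alpha$; and if $k \ge \gamma$, then using $\beta' a_2 + \gamma a_3 = (\alpha+\alpha') a_1$ (from $\Delta_2$) one gets $w + \beta' a_2 = (\alpha+\alpha') a_1 + \big(j a_2 + (k-\gamma) a_3\big)$, so again $\rho(w + \beta' a_2) \ge \alpha$. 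In every case with $j \ge \beta$ or $k \ge \gamma$ the corresponding term therefore vanishes, and we conclude $\ell_R(K/R) = \alpha\beta\gamma$.

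The main obstacle is the middle step: correctly reading off $\mathrm{Ap}(H,a_1)$ from the monomial quotient $T/(t^{a_1})$ and installing the uniform membership criterion $q a_1 + v \in H \iff q + \rho(v) \ge 0$; once these are in place the concluding case analysis is only a few lines. One should also keep in mind that the standing hypothesis that $T$ is not Gorenstein is precisely what supplies the Herzog matrix, with all six exponents positive, used throughout --- in the Gorenstein case $\rmr(R) = 1$, $K = R$, and the statement is vacuous.
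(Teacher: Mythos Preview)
The paper does not prove this proposition at all; it is quoted verbatim from \cite[Theorem 4.1]{GMP} and used as a black box. So there is nothing in the present paper to compare your argument against.

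Your argument is correct and self-contained. You first convert $\ell_R(K/R)$ to the combinatorial count $\#\{h\in H\mid h+a\notin H\}$, then make the Ap\'ery set $\mathrm{Ap}(H,a_1)$ explicit via the monomial quotient $k[Y,Z]/(Z^{\gamma+\gamma'},Y^{\beta'}Z^\gamma,Y^{\beta+\beta'})$ coming from the Herzog matrix, and finally use the relations $\Delta_2,\Delta_3$ to show that only the $\beta\gamma$ Ap\'ery elements with $j<\beta$ and $k<\gamma$ contribute, each contributing exactly $\alpha$. The identification $T/(t^{a_1})\cong k[Y,Z]/(Z^{\gamma+\gamma'},Y^{\beta'}Z^\gamma,Y^{\beta+\beta'})$ is legitimate because $\fka$ is generated by $\Delta_1,\Delta_2,\Delta_3$ and all six exponents are positive; the $a_1$ monomials in the quotient then correspond bijectively to $\mathrm{Ap}(H,a_1)$ since $\ell_k(T/t^{a_1}T)=a_1$ forces the images $t^{ja_2+ka_3}$ to be distinct. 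The membership criterion $qa_1+v\in H\iff q+\rho(v)\ge 0$ is just the statement that subtracting $a_1$ from an element of $H$ stays in $H$ until one reaches the Ap\'ery representative, and your case split covers all of $\mathrm{Ap}$ (the two cases $j\ge\beta$, $k\ge\gamma$ may overlap, but you only need each to force $\rho\ge\alpha$). This is an efficient direct proof; the only comparison one could make is with the original argument in \cite{GMP}, which is outside the present paper.
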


\noindent
Therefore, if $R$ is a $2$-$\AGL$ ring, then $\ell_R(K/R)=2$ by Proposition \ref{3.6}, so that $\alpha =2$ and $\beta=\gamma = 1$ by Proposition \ref{7.5} after a suitable permutation of $a_1, a_2, a_3$ if necessary. Consequently, Theorem \ref{7.4} is reduced to the following.

\begin{thm}\label{7.6} 
Let  $m < n$ and assume that $\fka = \rmI_2\left(\begin{smallmatrix}
X^2&Y&Z\\
Y^{\beta'}&Z^{\gamma'}&X^{\alpha'}\\
\end{smallmatrix}
\right)$ with $\alpha', \beta', \gamma' >0$. Then $R$ is a $2$-$\AGL$ ring if and only if $\alpha' \ge 2$. When this is the case, $f = 2a +a_1$, where $a= n-m$
\end{thm}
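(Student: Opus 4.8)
The plan is to deduce the statement from Theorem~\ref{7.2} after first computing the Frobenius number of $H$ directly from the presenting matrix. Recall that we are in the case $\alpha=2$, $\beta=\gamma=1$ of the Hilbert--Burch picture, so the three binomial generators of $\fka$ are $\Delta_1 = Z^{1+\gamma'}-X^{\alpha'}Y$, $\Delta_2 = X^{2+\alpha'}-Y^{\beta'}Z$, $\Delta_3 = Y^{1+\beta'}-X^2Z^{\gamma'}$.

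First I would read off the arithmetic of $H$ from the resolution. Specializing the degree identities recorded before Proposition~\ref{7.5} gives $a = n-m = a_2\beta' - 2a_1 = a_3\gamma' - a_2 = a_1\alpha' - a_3$, together with $\mathrm{PF}(H) = \{m-d,\,n-d\}$, $f = \rmf(H) = n-d$, $d = a_1+a_2+a_3$, and $n = a_1\alpha' + d_3$ with $d_3 = a_2(1+\beta')$. Combining these, $f = n-d = (a_1\alpha'-a_3) + (a_2\beta'-2a_1) + a_1 = 2a + a_1$. Since this uses only the shape of $\fka$, it holds unconditionally, which already settles the last sentence of the theorem; moreover $\rmr(R) = \sharp\,\mathrm{PF}(H) = 2$ and $K = R + Rt^a$ with $a = n-m > 0$ (the hypothesis $m<n$).

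Next I would check that Theorem~\ref{7.2} applies: $\ell = 3$; $\rmr(R)=2$ and $K$ has the required form by the previous step; and $a_j \notin \left<a_1,\dots,\overset{\vee}{a_j},\dots\right>$ for each $j$ (equivalently $H$ is minimally $3$-generated), which holds because the three generators $\Delta_1,\Delta_2,\Delta_3$ of $\fka$ visibly lie in $(X,Y,Z)^2$ (every exponent that occurs being $\ge 1$ and $\alpha=2$), so $R$ has embedding dimension $3$. By Theorem~\ref{7.2}, $R$ is $2$-$\AGL$ iff $3a \in H$ and $f = 2a+a_i$ for some $i$; as $f = 2a+a_1$ always, this reduces to: $R$ is $2$-$\AGL$ iff $3a \in H$. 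To finish, I would add the three expressions for $a$ to get $3a = (\alpha'-2)a_1 + (\beta'-1)a_2 + (\gamma'-1)a_3$. If $\alpha' \ge 2$, all coefficients are nonnegative (as $\beta',\gamma'\ge 1$), so $3a \in H$. If $\alpha' = 1$, then $a = a_1-a_3$, whence $3a + a_3 = 3a_1 - 2a_3 = 2a + a_1 = f \notin H$; since $a_3 \in H$ and $H$ is closed under addition, this forces $3a \notin H$. Hence $R$ is $2$-$\AGL$ if and only if $\alpha'\ge 2$.

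The main obstacle is the bookkeeping in the second paragraph: among the several mutually consistent expressions for $a$, $m$, $n$ produced by the resolution one has to isolate the combination that collapses to the clean formula $f = 2a + a_1$. After that the argument is short, the one real insight being the observation that when $\alpha'=1$ the element $3a+a_3$ is precisely the Frobenius number, so the failure of $3a\in H$ is automatic. Everything else is a direct appeal to Theorem~\ref{7.2} and to Proposition~\ref{7.5} (the latter already invoked in reducing Theorem~\ref{7.4} to this statement).
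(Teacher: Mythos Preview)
Your argument is correct, and the route you take is genuinely different from the paper's. The paper does \emph{not} invoke Theorem~\ref{7.2}; instead, for the implication $\alpha'\ge 2 \Rightarrow$ $2$-AGL it computes directly that $3a\in H$, then shows $(t^{2a_1})+(t^{a_2},t^{a_3})\subseteq \fkc$ and $t^{a_1}\notin\fkc$ by hand, concluding $\ell_R(R/\fkc)=2$; for the converse it assumes $R$ is $2$-AGL, uses Corollary~\ref{2.5}(2) to analyse $\m K^2+K$, rules out $f=2a+a_3$ by a small case argument, and only then obtains $f=2a+a_1$ and the contradiction when $\alpha'=1$. Your key simplification is to compute $f=2a+a_1$ \emph{unconditionally} from the degree identities at the outset; once that is done, Theorem~\ref{7.2} collapses the whole question to ``$3a\in H$ or not,'' and the identity $3a=(\alpha'-2)a_1+(\beta'-1)a_2+(\gamma'-1)a_3$ together with the observation $3a+a_3=f$ when $\alpha'=1$ finishes it immediately. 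What your approach buys is brevity and a clear separation of concerns (the general criterion of Theorem~\ref{7.2} versus the specific arithmetic of the Herzog matrix); what the paper's approach buys is a self-contained argument that also makes the conductor $\fkc=(t^{2a_1},t^{a_2},t^{a_3})$ explicit, a byproduct you do not obtain.

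One minor remark: your claim that $\fka\subseteq (X,Y,Z)^2$ forces minimal $3$-generation of $H$ is correct, but it would be cleaner to say explicitly that this gives $\dim_k \fkm/\fkm^2 = 3$, hence $\{t^{a_i}\}$ is a minimal generating set of $\fkm$, which is exactly the hypothesis of Theorem~\ref{7.2}.
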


\begin{proof}[{Proof of Theorem $\ref{7.6}$}] Notice that $R$ is not an $\AGL$ ring, since $\ell_R(K/R)= 2$ by Proposition \ref{7.5}. We get   by equations $(E)$ above
\begin{enumerate}
\item[{\rm (i)}] $a_2\beta' =2a_1 + a$, 
\item[{\rm (ii)}] $a_3\gamma' = a_2 + a$, and 
\item[{\rm (iii)}] $a_1\alpha' = a_3 + a$.
\end{enumerate} Suppose that $\alpha' \ge 2$. Then $3a = a_2(\beta'-1) + a_1(\alpha' -2) + a_3(\gamma'-1) \in H$. Hence $S =K^2= R + Rt^a + Rt^{2a}$. Therefore, because 
\begin{enumerate}
\item[{\rm (iv)}]  $2a_1 + 2a= (2a_1 +a)+a = a_2\beta' + (a_3\gamma' - a_2) = a_2(\gamma'-1) +a_3\gamma'\in H$,
\item[{\rm (v)}] $a_2 + 2a =(a_3\gamma' - a_2) +(a_1\alpha' - a_3) + a_2 = a_3(\gamma' -1) + a_1\alpha'\in H$, and 
\item[{\rm (vi)}] $a_3 +2a = (a_1\alpha' - a_3) + (a_2\beta' - 2a_1) + a_3 = a_1(\alpha'-2) + a_2\beta' \in H,$
\end{enumerate} 
we get that $(t^{2a_1})+(t^{a_2}, t^{a_3}) \subseteq K:S =\fkc$ by Proposition \ref{2.4} (1) and that 
$(2a+a_1)+a_i \in H$ for $i = 1,2,3$. Hence $2a + a_1 \in \mathrm{PF}(H)$ if $2a+a_1 \not\in H$. Now notice that $\m K^2+K = K + Rt^{2a+a_1}$ because $2a + a_i \in H$ for $i = 2,3$ by equations (v) and (vi), whence $t^{2a + a_1} \not\in K$ because $\m K^2 \not\subseteq K$ by Proposition \ref{2.4} (4). In particular, $2a + a_1 \not\in H$. Therefore, $t^{a_1} \not\in \fkc$, so that $\fkc \ne \m$ and $\fkc = (t^{2a_1})+(t^{a_2}, t^{a_3})$. Thus $R$ is a $2$-$\AGL$ ring, because  $\ell_R(R/\fkc)= 2$. Notice that $2a + a_1 \in \mathrm{PF}(H)=\{f-a,f\}$ and we get $f = 3a + a_1 \in H$ if $f \ne 2a+a_1$, which is impossible as $3a \in H$. Hence $f = 2a + a_1$.

Conversely, assume that $R$ is a $2$-$\AGL$ ring. Then $2a \not\in H$, since $K \ne K^2$. Therefore $3a \in H$, since $t^{3a} \in  K^2$. Because $\m K^2 + K = K + \sum_{j=1}^3Rt^{2a+a_j}$ and $a_2 + 2a \in H$ by equation (v), we get $$K + Rt^f = K:\m=\m K^2 + K = K + \left(Rt^{2a+a_1}+Rt^{2a+a_3}\right),$$ where the second equality follows from Corollary \ref{2.5} (2). Therefore, if $t^{2a+a_3} \not\in K$, then $f = 2a + a_3$, so that $\mathrm{PF}(H) = \{a+a_3, 2a+a_3\}$, which is absurd because $a + a_3  \in H$ by equation (iii). Thus $t^{2a+a_3} \in K$, so that $\m K^2 + K = K + Rt^{2a+a_1}$ and $f = 2a + a_1$. Suppose now that $\alpha' = 1$. Then $a_1 = a + a_3$ by equation (iii), whence $f=2a + a_1 = 3a + a_3 \in H$ because $3a \in H$. This is a required contradiction, which completes the proof of Theorem \ref{7.4} as well as that of Theorem \ref{7.6}.  
\end{proof}

When $H$ is $3$-generated and $\rme(R)=\min\{a_1, a_2, a_3\}$ is small, we have the following structure theorem  of $H$ for $R$ to be a $2$-$\AGL$ ring.

\begin{corollary}\label{7.7} Let $\ell = 3$. 
\begin{enumerate}[{\rm (1)}]
\item Suppose that $\min\{a_1, a_2, a_3\} = 3$. Then the following conditions are equivalent.
   \begin{enumerate}[{\rm (a)}]
   \item $R$ is a $2$-$\AGL$ ring.
   \item $H= \left<3, c+3, 2c \right>$ for some $c \ge 4$ such that $c \not\equiv 0~mod~3$.
   \end{enumerate}
\item If $\min\{a_1, a_2, a_3\} = 4$, then $R$ is not a $2$-$\AGL$ ring.
\item Suppose that $\min\{a_1, a_2, a_3\} = 5$. Then the following conditions are equivalent.
   \begin{enumerate}[{\rm (a)}]
   \item $R$ is a $2$-$\AGL$ ring.
   \item $\mathrm{(i)}$ $H= \left<5, 3c +8, 2c+2\right>$ for some $c \ge 2$ such that $c \not\equiv 4 ~mod~5$ or  $\mathrm{(ii)}$  $H= \left<5, c+4, 3c + 2\right>$ for some $c \ge 2$ such that $c \not\equiv 1 ~mod~5$.
   \end{enumerate}
\end{enumerate}
\end{corollary}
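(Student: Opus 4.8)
The plan is to deduce Corollary~\ref{7.7} from Theorems~\ref{7.2} and~\ref{7.4} by a multiplicity-by-multiplicity analysis of the numerical semigroup $H$.

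I would begin with two reductions. If $R = k[[H]]$ is a $2$-$\AGL$ ring with $\ell = 3$, then $R$ is not Gorenstein, so $H$ must be minimally $3$-generated (otherwise $H$ is generated by two elements and $R$ is a complete intersection, hence Gorenstein); and since, when $H$ is $3$-generated, $\rmK_{k[[H]]}$ is a homomorphic image of a free module of rank $2$ (dualize the Hilbert--Burch resolution), the type satisfies $\rmr(R) \le 2$, so in fact $\rmr(R) = 2$. Consequently $\mathrm{PF}(H) = \{p,\rmf(H)\}$ with $p < \rmf(H)$, and, setting $a = \rmf(H) - p$, we have $K = R + Rt^{a}$, so Theorem~\ref{7.2} applies: for a minimally $3$-generated $H$ with $\rmr(R) = 2$, $R$ is $2$-$\AGL$ if and only if $3a \in H$ and $\rmf(H) = 2a + a_i$ for some $i$. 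This is the criterion I would solve in each case. For the reverse inclusions it is equivalent, and often more transparent, to exhibit the Herzog matrix directly and invoke Theorem~\ref{7.4}; for instance in case (1) the $2\times 2$ minors of $\left(\begin{smallmatrix} X^2 & Y & Z\\ Y & Z & X^{c-1}\end{smallmatrix}\right)$ with $X = t^3,\, Y = t^{c+3},\, Z = t^{2c}$ generate $\Ker\varphi$ and $c - 1 \ge 2$.

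Next I would describe, for each $e = \rme(R) = \min\{a_1,a_2,a_3\} \in \{3,4,5\}$, the minimally $3$-generated, non-symmetric numerical semigroups of multiplicity $e$ in terms of their Apéry sets with respect to $e$, and read off $\rmf(H)$ and $a$ from them. For $e = 3$ this is immediate: $H = \langle 3, w_1, w_2\rangle$ with $w_1 \equiv 1$, $w_2 \equiv 2 \pmod 3$, and $\mathrm{PF}(H) = \{w_1 - 3, w_2 - 3\}$; the condition $3a \in H$ is automatic since the reduction number is at most $2$ in multiplicity $3$ (cf.\ the proof of Proposition~\ref{3.7}); and $\rmf(H) = 2a + a_i$ forces $a_i = 3$ and $\max(w_1,w_2) = 2\min(w_1,w_2) - 6$, which after the substitution $\min(w_1,w_2) = c+3$ gives exactly $\langle 3, c+3, 2c\rangle$ with $c \ge 4$, $c \not\equiv 0 \pmod 3$ (these being precisely the conditions under which the three displayed generators are distinct, irredundant, and of $\mathrm{GCD}$ one). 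For $e = 4$, the requirement $\rmr(R) = 2$ forces the two generators other than $4$ to have residues $\{1,3\} \pmod 4$; then $a_i = 4$ is ruled out modulo $4$, the remaining choices put $H$ into the shape $\langle 4, x, 3x-4\rangle$ or $\langle 4, 3y-4, y\rangle$, and in each of these the resulting value of $a$ makes $3a$ fall strictly below the smallest element of $H$ in its residue class, so $3a \notin H$; hence multiplicity $4$ yields no $2$-$\AGL$ ring. For $e = 5$ one runs through the residue pairs modulo $5$ of the two generators other than $5$ and through their orderings; imposing $3a \in H$ together with $\rmf(H) = 2a + a_i$ leaves exactly $\langle 5, 3c+8, 2c+2\rangle$ and $\langle 5, c+4, 3c+2\rangle$, with the stated congruences on $c$ being again exactly what forces $\mathrm{GCD}(a_1,a_2,a_3) = 1$ and minimality of the generating system.

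I expect the case $e = 5$ to be the main obstacle. Unlike $e = 3$, where $3a \in H$ is free and a single linear equation settles everything, and unlike $e = 4$, where the argument collapses quickly, the multiplicity-$5$ analysis branches according to the residue pair of the two larger generators modulo $5$ and according to which of $2x, 2y, x, y$ is the relevant Apéry element; each branch produces a small Diophantine system that must be solved and then tested against $3a \in H$, against $\mathrm{GCD}(a_1,a_2,a_3)=1$, and against minimality of the generating set. The two surviving families correspond to the two essentially different positions, $(\beta',\gamma') = (1,3)$ and $(\beta',\gamma') = (2,1)$ in the notation of Theorems~\ref{7.4}--\ref{7.6}, that the smaller exponents of the Herzog matrix can occupy once $\deg X = 5$ and $\alpha' = c$. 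Some care is also needed in the $e = 4$ and $e = 5$ converses to confirm that the displayed semigroups are genuinely $3$-generated with $\rmr(R) = 2$, which is exactly where the congruence hypotheses on $c$ enter.
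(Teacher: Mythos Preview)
Your strategy is sound and would ultimately succeed, but it differs substantially from the paper's route and is a good deal more laborious. You organise the forward direction around Theorem~\ref{7.2} (the criterion $3a\in H$ and $f=2a+a_i$) together with an Ap\'ery-set description of $H$ for each small multiplicity, and then run a residue-by-residue Diophantine analysis; this works, but for $e=4$ and especially $e=5$ it branches into several subcases that each need separate verification (your own remark that the $e=5$ case is ``the main obstacle'' is accurate).

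The paper bypasses all of this by going straight through Theorem~\ref{7.4} and the explicit Herzog formulas. Once $R$ is $2$-$\AGL$, Theorem~\ref{7.4} forces the matrix $\left(\begin{smallmatrix} X^2 & Y & Z\\ Y^{\beta'} & Z^{\gamma'} & X^{\alpha'}\end{smallmatrix}\right)$ with $\alpha'\ge 2$, and then the relations $\fka=(\Delta_1,\Delta_2,\Delta_3)$ give closed formulas
\[
a_1=\beta'\gamma'+\beta'+1,\qquad a_2=2\gamma'+\alpha'\gamma'+2\ge 6,\qquad a_3=\alpha'\beta'+\alpha'+2\ge 6,
\]
obtained by computing $\ell_R(R/t^{a_i}R)$ as the length of the appropriate Artinian quotient of $k[X,Y,Z]$. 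Since $a_2,a_3\ge 6$, the minimum $e$ must equal $a_1=\beta'\gamma'+\beta'+1$; solving this single equation for $e\in\{3,4,5\}$ yields at most two pairs $(\beta',\gamma')$, and substituting back produces the listed families with $c=\alpha'$. The $e=4$ impossibility, for instance, falls out in one line: $\beta'(\gamma'+1)=3$ forces $(\beta',\gamma')=(1,2)$, whence $a_2=2\alpha'+6$ and $a_3=2\alpha'+2$ are both even, contradicting $\gcd(a_1,a_2,a_3)=1$.

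So your plan is correct, and your use of Theorem~\ref{7.4} for the converse matches the paper exactly; but for the forward direction you would save yourself a great deal of casework by extracting the three length formulas above from the Herzog presentation rather than reconstructing $H$ from its Ap\'ery set.
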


\begin{proof} Let $e = \min \{a_1, a_2, a_3\}$. Suppose that $R$ is a $2$-$\AGL$ ring. Then by Theorem \ref{7.4}, after a suitable permutation of $a_1, a_2, a_3$ we get $$\fka = \rmI_2\left(\begin{smallmatrix}
X^2&Y&Z\\
Y^{\beta'}&Z^{\gamma'}&X^{\alpha'}\\
\end{smallmatrix}
\right)$$ for some integers $\alpha', \beta', \gamma'$ such that $\alpha' \ge 2$ and $\beta', \gamma' >0$. Remember that 
$$a_1= \beta' \gamma' + \beta' + 1,$$ 
because $a_1 =\ell_R(R/t^{a_1}R) = \ell_k(k[Y,Z]/(Y^{\beta' + 1}, Y^{\beta'}Z, Z^{\gamma'+1})$. We similarly have that
$$a_2 = 2 \gamma'+ \alpha' \gamma' + 2 \ge 6, \ \ \ a_3 = \alpha' \beta' + \alpha' + 2\ge 6$$
since $\alpha' \ge 2$. Therefore,  $e = a_1= \beta' \gamma' + \beta' + 1$, if $e \le 5$.

(1) (a) $\Rightarrow$ (b) We have  $\beta' = \gamma' = 1$. Hence $a_2 =\alpha' + 4$ and $a_3 = 2\alpha'+2$, that is $H = \left<3, c+3, 2c\right>$, where $c = \alpha' + 1$. We have $c \not\equiv~0~\mod~3$ because $\mathrm{GCD}~(3, c +3, 2c)=1$, whence $c \ge 4$.

(b) $\Rightarrow$ (a) See Corollary \ref{7.3} or notice that $\fka = \rmI_2\left(\begin{smallmatrix}
X^2&Y&Z\\
Y &Z &X^{c-1}\\
\end{smallmatrix}
\right)$
and apply Theorem \ref{7.4}

(2) We have $a_1= \beta' \gamma' + \beta' + 1 =4$, so that 
$\beta'= 1$ and  $\gamma' = 2$. Hence $a_2 = 2\alpha' + 6$ and $a_3 = 2\alpha' + 2$, which is impossible because $\mathrm{GCD}~(a_1, a_2, a_3) = 1$.

(3) (a) $\Rightarrow$ (b) We set $c = \alpha'$. Then either $\beta' = 1$ and $\gamma'=3$ or $\beta' = 2$ and $\gamma'=1$. For the former case  we get (i) $H = \left<5, 3c + 8, 2c +2\right>$, where $c \not\equiv~4~\mod~5$. For the latter case we get (ii) $H= \left<5, c+4, 3c + 2\right>$, where $c \not\equiv 1 ~\mod~5$.

(b) $\Rightarrow$ (a) For Case (i) we have $\fka = \rmI_2\left(\begin{smallmatrix}
X^2&Y&Z\\
Y&Z^3&X^c\\
\end{smallmatrix}
\right)$ and for Case (ii) $\fka = \rmI_2\left(\begin{smallmatrix}
X^2&Y&Z\\
Y^{2}&Z&X^{c}\\
\end{smallmatrix}
\right)$, whence $R$ is a $2$-$\AGL$ ring by Theorem \ref{7.4}.
\end{proof}

Even though $R$ is a $2$-$\AGL$ ring, $K/R$ is not necessarily a free $R/\fkc$-module (Example \ref{ex2}). Here let us note a criterion for the freeness of $K/R$.

\begin{proposition}\label{7.9}
Let $r =\rmr(R) \ge 2$ and write $\mathrm{PF}(H) = \{c_1 < c_2< \cdots < c_r=f\}$.  Assume that $R$ is a $2$-$\AGL$ ring. Then the following conditions are equivalent.
\begin{enumerate}[{\rm (1)}]
\item $K/R \cong (R/\fkc)^{\oplus (r-1)}$ as an $R$-module.
\item There is an integer $1 \le j \le \ell$ such that $f + a_j = c_i + c_{r-i}$ for all $1 \le i \le r-1$.
\end{enumerate}
\end{proposition}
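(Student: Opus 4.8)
Since $R$ is a $2$-$\AGL$ ring, by Proposition~\ref{2.7}~(4) we know $K/R \cong (R/\fkc)^{\oplus \ell_0} \oplus (R/\fkm)^{\oplus m_0}$ with $\ell_0 > 0$ and $\ell_0 + m_0 = r-1$, and $K/R$ is a free $R/\fkc$-module precisely when $m_0 = 0$, equivalently $\ell_R(K/R) = 2(r-1)$. So condition~(1) is equivalent to the length equality $\ell_R(K/R) = 2(r-1)$. The plan is to compute $\ell_R(K/R)$ semigroup-theoretically and compare it with the numerical condition in~(2).

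**Key computation.** First I would set up the combinatorial dictionary. Writing $f = \rmf(H)$ and $K = \sum_{c \in \mathrm{PF}(H)} Rt^{f-c}$, the exponents appearing in $K$ are exactly the elements of the set $L_H = \bigcup_{c \in \mathrm{PF}(H)} (f - c + H)$, and $\ell_R(K/R) = \sharp(L_H \setminus H)$. The duality $K \cong \rmK_R$ gives the symmetry: $x \in L_H$ if and only if $f - x \notin H$ (this is the standard description of the canonical module of a numerical semigroup ring, cf. \cite[Example (2.1.9)]{GW}, and it shows $\ell_R(K/R) = \sharp\{x : 0 \le x,\ x \notin H,\ f - x \notin H\}$ — the number of ``holes'' $x$ with $f-x$ also a hole). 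Next I would bring in the $2$-$\AGL$ structure: by Proposition~\ref{2.7} and Theorem~\ref{1.4}, $\ell_R(R/\fkc) = 2$, and in the semigroup case $\fkc = R : S$ corresponds to $\fkc = \{x \in H : x + S\text{-exponents} \subseteq H\}$; concretely $t^x \in \fkc \iff x \in H$ and $x - a$, the ``$a$''-shift where $K = R + \sum Rt^{a_j'}$ ... — more usefully, $\ell_R(R/\fkc) = 2$ translates to $\mathrm{PF}(H)$ and the conductor being tightly linked. I would extract: the two-element module $R/\fkc$ forces that the ``doubling map'' on holes is close to an involution.

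**The equivalence.** The heart is this: $K/R$ is $R/\fkc$-free of rank $r-1 = \sharp\mathrm{PF}(H) - 1$ iff each of the $r-1$ cyclic pieces $R\cdot\overline{t^{f-c_i}}$ (for $i = 1, \dots, r-1$, omitting $c_r = f$ which gives $t^0 = 1 \in R$) is isomorphic to $R/\fkc$ rather than $R/\fkm$. The piece generated by $\overline{t^{f-c_i}}$ is killed by $\fkc$ and is $\cong R/\fkc$ exactly when it is not killed by all of $\fkm$, i.e. when $t^{f - c_i + a_j}$ survives (is not in $R$) for... no — when it \emph{is} annihilated only by $\fkc$, meaning $\fkc \cdot t^{f-c_i} \subseteq R$ but $\fkm \cdot t^{f - c_i} \not\subseteq R$, AND the images are linearly independent over $R/\fkc$. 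Since $\ell_R(R/\fkc) = 2$, each piece has length $1$ or $2$; freeness $\iff$ total length $= 2(r-1)$ $\iff$ every piece has length $2$ $\iff$ each $\overline{t^{f-c_i}}$ generates a copy of $R/\fkc$. I would then show: $R\overline{t^{f-c_i}} \cong R/\fkc$ for all $i$, with the images forming a free basis, holds iff $f - c_i + (\text{the }\fkc\text{-generator shifts})$ land in $R$ non-trivially in a coordinated way, which after unwinding the conductor $\fkc = (t^{2a_1}) + (t^{a_2}, \dots)$ becomes precisely: $f + a_j \notin H$ would be wrong — rather $f + a_j = c_i + c_{r-i}$ for all $i$. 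The idea is that $t^{c_i + c_{r-i}}$ is the ``product'' witnessing that multiplication by $t^{f + a_j - c_i} = t^{c_{r-i}}$... I'd verify $f - c_i + a_j$ (an element forced into $H$ since $\fkc$ involves $t^{2a_1}$, etc.) equals $c_{r-i}$, giving $f + a_j = c_i + c_{r-i}$; conversely this relation across all $i$ produces enough independent torsion to force all pieces $\cong R/\fkc$.

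**Main obstacle.** The delicate step is pinning down \emph{which} single generator $a_j$ of $H$ does the job and why the relation $f + a_j = c_i + c_{r-i}$ must hold \emph{simultaneously for all} $i$, rather than for each $i$ possibly with a different $a_{j(i)}$. This is where $\ell_R(R/\fkc) = 2$ is essential: because $\fkc$ has the very rigid form $(t^{2a_1}) + (t^{a_2}, \dots, t^{a_n})$ from Proposition~\ref{2.7}~(2), the socle of $R/\fkc$ is one-dimensional, and the pairing $K/R \times (S/K) \to$ (something) — really the self-duality of $K$ — forces the pairing $c_i \leftrightarrow c_{r-i}$ on pseudo-Frobenius numbers to be implemented by a \emph{single} translation amount, namely $f + a_j$ for whichever $a_j$ satisfies $t^{a_j} \notin \fkc$ (there is exactly one such generator among a minimal set, corresponding to the ``$x_1^2$'' slot). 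Verifying that this $a_j$ is well-defined and that the resulting relations are both necessary and sufficient for freeness — using the duality $K/R \cong \Hom_R(K/R, R/\fkc)$ built into the $2$-$\AGL$ structure — is the real content; the rest is length bookkeeping with the dictionary above.
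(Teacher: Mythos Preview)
Your overall framework is right: condition~(1) is equivalent to $\ell_R(K/R)=2(r-1)$ via Proposition~\ref{2.7}(4), and the distinguished index $j$ is indeed the unique one with $t^{a_j}\notin\fkc$, coming from the conductor description $\fkc=(t^{2a_j})+(t^{a_i}\mid i\ne j)$ of Proposition~\ref{2.7}(2). But two concrete points need repair.

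For (1) $\Rightarrow$ (2) you write that ``$f-c_i+a_j$ (an element forced into $H$ since $\fkc$ involves $t^{2a_1}$, etc.) equals $c_{r-i}$''. This is backward: the crucial fact is that $f-c_i+a_j\notin H$. The surjection $(R/\fkc)^{\oplus(r-1)}\to K/R$ sending $\mathbf{e}_i\mapsto\overline{t^{f-c_i}}$ is between free $R/\fkc$-modules of the same rank over a local ring, hence an isomorphism; since $t^{a_j}\notin\fkc$, multiplication by $t^{a_j}$ does not annihilate $\overline{t^{f-c_i}}$, so $t^{f-c_i+a_j}\notin R$. What the conductor form \emph{does} give is $t^{2a_j}{\cdot}t^{f-c_i}\in R$ and $t^{a_k}{\cdot}t^{f-c_i}\in R$ for $k\ne j$, whence $(f-c_i+a_j)+a_m\in H$ for every $m$. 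Together these say precisely $f-c_i+a_j\in\mathrm{PF}(H)$. Now the $r-1$ distinct numbers $f-c_i+a_j$ all lie in $\mathrm{PF}(H)$ and are all $<f$ (if $f-c_1+a_j=f$ then $c_1=a_j\in H$, contradicting $c_1\in\mathrm{PF}(H)$), so as a set they equal $\{c_1,\ldots,c_{r-1}\}$; since $i\mapsto f-c_i+a_j$ reverses order, $f-c_i+a_j=c_{r-i}$. No duality pairing $K/R\cong\Hom_R(K/R,R/\fkc)$ is needed, and you should not invoke one without proof.

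For (2) $\Rightarrow$ (1) your proposal says only that the relations ``produce enough independent torsion''; this direction is effectively missing. The argument is short and direct: from $f+a_j=c_i+c_{r-i}$ one has $t^{c_{r-i}}=t^{a_j}{\cdot}t^{f-c_i}\in\fkm K$ for all $1\le i\le r-1$, so $R+\sum_{i=1}^{r-1}Rt^{c_i}\subseteq\fkm K+R$. Since $c_i\in\mathrm{PF}(H)$, each $\overline{t^{c_i}}$ is nonzero in $K/R$ and killed by $\fkm$, and the $r-1$ of them are $k$-independent; hence $\ell_R((\fkm K+R)/R)\ge r-1$. Combined with $\ell_R(K/(\fkm K+R))=\mu_R(K/R)=r-1$ this gives $\ell_R(K/R)\ge 2(r-1)$, and Proposition~\ref{2.7}(4) forces equality, i.e.\ $K/R$ is $R/\fkc$-free.
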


\begin{proof}
(2) $\Rightarrow$ (1)  We have $K = \sum_{i=1}^{r}Rt^{f-c_i}$ and $\ell_R(K/(\m K + R))= \mu_R(K/R) = r-1$. Because $t^{c_{r-i}} = t^{f-c_i + a_j}\in \m K + R$ for all $1 \le i < r$, $R + \sum_{i=1}^{r-1}Rt^{c_i} \subseteq \m K + R$, whence $$\ell_R(K/R) \ge \ell_R(K/(\m K + R)) + \ell_R((R+\sum_{i=1}^{r-1}Rt^{c_i})/R) = 2(r-1).$$ Thus $K/R$ is a free $R/\fkc$-module by Proposition \ref{2.7} (4).

(1) $\Rightarrow$ (2)
We may assume that $a_j \not\in \left<a_1, \ldots, \overset{\vee}{a_j}, \ldots, a_\ell\right>$ for all $1 \le j \le \ell$. Hence $\m$ is minimally generated by  the elements $\{t^{a_i}\}_{1 \le i \le \ell}$. Therefore, since  $\ell_R(R/\fkc) = 2$, by Proposition \ref{2.7} (2)  $\fkc = (t^{2a_j}) + (t^{a_i} \mid 1 \le i \le \ell, i \ne j)$ for some $1 \le j \le \ell$. Because $K/R$ is minimally generated by $\{\overline{t^{f-c_i}} \}_{1 \le i \le r-1}$ where $\overline{t^{f-c_i}}$ denotes the image of $\overline{t^{f-c_i}}$ in $K/R$  and  because $K/R$ is a free $R/\fkc$-module, the homomorphism
$$(\sharp)\ \ \varphi : (R/\fkc)^{\oplus (r-1)} \to K/R, \ \ \ \mathbf{e}_i \mapsto \overline{t^{f-c_i}}\ \  ~\text{for~each}~1 \le i \le r-1$$
of $R/\fkc$-modules has to be an isomorphism, where $\{\mathbf{e}_i\}_{1 \le i \le r-1}$ denotes the standard basis of $(R/\fkc)^{\oplus (r-1)}$. Now remember that $t^{a_j} \not\in \fkc$, which shows via the isomorphism $(\sharp)$ above that $t^{a_j}{\cdot}t^{f-c_i} \not\in R$ for all $1 \le i \le r-1$, while we have $t^{2a_j}{\cdot}t^{f-c_i} \in R$ and $t^{a_k}{\cdot}t^{f-c_i} \in R$ for all $k \ne j$. Therefore, $f-c_i + a_j \not\in H$ but $(f-c_i + a_j)+a_m \in H$ for all $1 \le m \le \ell$, so that $f-c_i + a_j \in \mathrm{PF}(H)$ for all $1 \le i \le r-1$. Notice that $f-c_1 + a_j \le f$ because $f - c_1 + a_j \not\in H$ and that  $f-c_1 + a_j < f$ because $c_1 \ne a_j$. Therefore,  because 
$$\{f- c_{r-1} + a_j < \cdots < f-c_2 + a_j < f-c_1 + a_j \} \subseteq \mathrm{PF}(H)=\{c_1 < \cdots < c_{r-1} < c_r =f\}$$
and  $f-c_1 + a_j  < f$, we readily get that $f+a_j = c_i + c_{r-i}$ for all $1 \le i \le r-1$.
\end{proof}

We close this paper with a broad method of constructing $2$-$\AGL$ rings from a given symmetric numerical semigroup. Let $H$ be a numerical semigroup and assume that $H$ is symmetric, that is $R=k[[H]]$ is a Gorenstein ring. For the rest of this paper we fix an arbitrary integer $0 < e \in H$. Let $\alpha_i = \min \{h \in H \mid h \equiv i ~\mod ~e\}$ for each $0 \le i \le e-1$. We set $Ap_e(H) = \{\alpha_i \mid 0 \le i \le e-1\}$. We then have $Ap_e(H) = \{h \in H \mid h - e \not\in H\}$,  which is called the Apery set of $H$ mod $e$. Let us write $$Ap_e(H) = \{h_0 = 0< h_1 < \cdots < h_{e-1}\}.$$   We then have $h_{e-1} = h_i + h_{e-1-i}$
for all $0 \le i \le e-1$, because $H$ is symmetric. Notice that  $H = \left<e, h_1, \ldots, h_{e-2}\right>$ and $h_{e-1} = h_1 + h_{e-2}$. Let $n \ge 0$ be an integer and set $$H_n =\left<e, h_1+ne, h_2+ne, \ldots, h_{e-1}+ ne \right>.$$
Notice that   $H_0 =H$ and for each $n >0$, $$e < h_1 + ne < \cdots < h_{e-2} + ne < h_{e-1}+ne$$ and $\mathrm{GCD}~(e, h_1 + ne, \ldots, h_{e-2} + ne, h_{e-1}+ne)=1$. We set
$$R_n = k[[H_n]], \ \ S_n = R_n[K_n],\ \ \text{and}\ \ \fkc_n = R_n : S_n,$$
where $K_n$ denotes the fractional canonical ideal of $R_n$. Let $\m_n = (t^e)+(t^{h_i +ne} \mid 1 \le i \le e-1)$ be the maximal ideal of $R_n$.

With this notation we have the following.

\begin{thm}\label{7.10}
For all $n \ge 0$ the following assertions hold true.
\begin{enumerate}[{\rm (1)}]
\item $K_n^2 = K_n^3$.
\item $\ell_{R_n}(K_n^2/K_n) = n$.
\item $K_n/R_n \cong (R_n/\fkc_n)^{\oplus (e-2)}$ as an $R_n$-module.
\end{enumerate}
Hence $R_2$ is a $2$-$\AGL$ ring for any choice of the integer $0 < e \in H$.
\end{thm}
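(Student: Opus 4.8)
The plan is to reduce everything to combinatorics inside the numerical semigroups $H_n$ and $H=H_0$. Write $Ap_e(H)=\{0=h_0<h_1<\cdots<h_{e-1}\}$ and recall the symmetry relation $h_{e-1}=h_i+h_{e-1-i}$ valid for all $i$; we may assume $e\ge 3$, since for $e\le 2$ the ring $R_n$ is Gorenstein and the statement degenerates. The first task is to prove that for every $n\ge 0$
\[
Ap_e(H_n)=\{\,0,\ h_1+ne,\ h_2+ne,\ \ldots,\ h_{e-1}+ne\,\}.
\]
For $n=0$ this is the definition. For $n\ge 1$, a nonzero $x\in H_n$ in the residue class of $h_i$ must involve at least one generator $h_l+ne$; if it involves $s\ge 1$ of them (with multiplicity), the leftover $\sum_l c_lh_l$ lies in $H$ in the class of $h_i$, hence is $\ge h_i$, so $x\ge sne+h_i\ge ne+h_i$, with equality forcing $s=1$. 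Thus $h_i+ne$ is the unique minimum of $H_n$ in its class.

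Next I would determine the pseudo-Frobenius numbers. Using the standard description of $\mathrm{PF}(H_n)$ via the partial order $\le_{H_n}$ on $Ap_e(H_n)$, it suffices to show $h_j-h_i\notin H_n$ whenever $1\le i<j\le e-1$. If $h_j-h_i\in H_n$, then, lying in a nonzero class of $H_n$, it equals $h_s+pe$ with $p\ge n\ge 1$ and $h_s\equiv h_j-h_i\pmod e$; but $h_i+h_s\in H$ then lies in the class of $h_j$, so $h_i+h_s\ge h_j$, i.e. $h_j-h_i\le h_s<h_s+pe$, a contradiction. Hence all the $h_i+ne$ ($1\le i\le e-1$) are $\le_{H_n}$-maximal, so $\mathrm{PF}(H_n)=\{h_i+(n-1)e\mid 1\le i\le e-1\}$, $\rmf(H_n)=h_{e-1}+(n-1)e$, and $\rmr(R_n)=e-1$. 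Feeding $\rmf(H_n)-(h_i+(n-1)e)=h_{e-1}-h_i=h_{e-1-i}$ into the formula $K_n=\sum_{c\in\mathrm{PF}(H_n)}R_nt^{\rmf(H_n)-c}$ yields the explicit description
\[
K_n=R_n+\sum_{i=1}^{e-2}R_nt^{h_i}.
\]
In particular the exponent semigroup of $S_n=R_n[K_n]$ is generated by $e$ and $h_1,\ldots,h_{e-2}$ (the generators $h_i+ne$ of $H_n$ becoming redundant, using $h_{e-1}=h_1+h_{e-2}$), so $S_n=R=k[[H]]$ and $\fkc_n=R_n:R$.

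Now the arithmetic of exponents modulo $e$ finishes the three assertions. Expanding $K_n^2=\sum_{0\le i,j\le e-2}R_nt^{h_i+h_j}$: an exponent $h_i+h_j$ is $\equiv h_{e-1}\pmod e$ only when $j=e-1-i$, in which case it equals $h_{e-1}$ exactly; otherwise $h_i+h_j=h_l+qe$ with $l\le e-2$ and $q\ge 0$, so $t^{h_i+h_j}\in R_nt^{h_l}\subseteq K_n$. Hence $K_n^2=K_n+R_nt^{h_{e-1}}$, and repeating the computation for $t^{h_{e-1}}t^{h_l}$ ($1\le l\le e-2$) gives $K_n^3=K_n^2$ — this is (1), and it shows $S_n=K_n^2$. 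Next one checks, again using symmetry, that $H\setminus(\text{exponent set of }K_n)=\{h_{e-1}+ce\mid 0\le c\le n-1\}$: every $h_j+ce\in H$ with $j\le e-2$ lies in $h_j+H_n$, while $h_{e-1}+ce$ lies in the exponent set of $K_n$ if and only if $c\ge n$. Therefore $\ell_{R_n}(S_n/K_n)=n=\ell_{R_n}(K_n^2/K_n)$, which is (2), and by Proposition \ref{2.4}(1) also $\ell_{R_n}(R_n/\fkc_n)=n$. Similarly $(h_i+H_n)\setminus H_n=\{h_i+pe\mid 0\le p\le n-1\}$ for $1\le i\le e-2$, and these sets are disjoint, so $\ell_{R_n}(K_n/R_n)=(e-2)n$. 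Since $\fkc_nR\subseteq R_n$ and $t^{h_i}\in R$, the natural surjection $R_n^{\oplus(e-2)}\to K_n/R_n$ sending $\mathbf e_i$ to $\overline{t^{h_i}}$ annihilates $\fkc_n^{\oplus(e-2)}$, hence induces a surjection $(R_n/\fkc_n)^{\oplus(e-2)}\twoheadrightarrow K_n/R_n$ between finite-length modules of the same length $(e-2)n$, which must be an isomorphism; this is (3). Finally, for $n=2$ parts (1) and (2) give $K_2^2=K_2^3$ and $\ell_{R_2}(K_2^2/K_2)=2$, so $R_2$ is $2$-$\AGL$ by Theorem \ref{1.4}.

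The step I expect to be the real obstacle is the computation of $\mathrm{PF}(H_n)$: a naive size estimate does not by itself prevent $h_j-h_i$ from lying in $H_n$ when $n$ is small, and one must genuinely use the symmetry of $H$ (through $h_i+h_s\ge h_j$) to see that the shift by $ne$ destroys every order relation among the nonzero Apéry elements. Once $\mathrm{PF}(H_n)$, and hence the explicit form of $K_n$, are available, all the remaining verifications are routine bookkeeping with monomials organized by residue class mod $e$.
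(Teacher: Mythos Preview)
Your argument is correct, but it follows a route that differs from the paper's in several places.

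For the identification of $\mathrm{PF}(H_n)$, the paper first proves that $\m_n^2=t^e\m_n$ (minimal multiplicity) and reads off $\mathrm{PF}(H_n)$ from that; you instead determine $Ap_e(H_n)$ directly and show the nonzero Ap\'ery elements are pairwise $\le_{H_n}$-incomparable. Your incomparability step (via $h_i+h_s\ge h_j$) is fine, though note it uses only that $h_j$ is the Ap\'ery element of $H$ in its class, not the symmetry of $H$; symmetry enters only later, when you compute $\rmf(H_n)-c$ for $c\in\mathrm{PF}(H_n)$ and get $h_{e-1-i}$.

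For assertion (2) the paper computes the conductor $\fkc_n=(t^{h_i+ne}\mid 0\le i\le e-1)$ explicitly and reads off $\ell_{R_n}(R_n/\fkc_n)=n$, while you count monomials in $S_n/K_n$ by residue class mod $e$; these are equivalent by Proposition~\ref{2.4}(1), and your counting is slightly more direct.

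The biggest divergence is in assertion (3). The paper builds the filtration $L_q=\m_n^qK_n+R_n$ and proves each layer has length $e-2$ by showing the images of $t^{h_j+qe}$ ($1\le j\le e-2$) are $k$-linearly independent in $L_q/L_{q+1}$; this requires a separate argument (Claim~\ref{7.12}) using minimality of the generating set of $H_n$. You bypass the filtration entirely: you count $\#\bigl((\text{exp set of }K_n)\setminus H_n\bigr)=(e-2)n$ by residue classes, observe that the obvious map $(R_n/\fkc_n)^{\oplus(e-2)}\to K_n/R_n$ is surjective because $\fkc_n t^{h_i}\subseteq R_n$, and conclude it is an isomorphism by comparing lengths. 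This is shorter and more elementary; the paper's filtration is more structural and makes the $R_n/\fkc_n$-module decomposition visible layer by layer.
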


\begin{proof}
We may assume $n > 0$. We begin with the following.

\begin{claim}\label{7.11} The following assertions hold true.
\begin{enumerate}[{\rm (1)}]
\item $h + ne \in H_n$ for all $h \in H$.
\item $\rmv(R_n) = \rme(R_n) = e$.
\end{enumerate}
\end{claim}

\begin{proof}[{Proof of Claim $\ref{7.10}$}]
(1) Let $h = h_i + qe$ with $0 \le i \le e-1$ and $q \ge 0$. Then $h + ne = (h_i + ne) + qe \in H_n$.

(2) Let $1 \le i,j \le e-1$. Then $(h_i + ne) + (h_j + ne) - e= \left[(h_i + h_j) + ne \right] + (n-1)e \in H_n$ by Assertion (1). Therefore, $\m_n^2 = t^e\m_n$.
\end{proof}

Consequently, by Claim \ref{7.11} (2) we get that $\{e\} \cup \{h_i + ne\}_{1 \le i \le e-1}$ is a minimal system of generators of $H_n$, whence $$\mathrm{PF}(H_n) = \{h_1 + (n-1)e, h_2 + (n-1)e, \ldots, h_{e-1}+ (n-1)e\}.$$ Therefore, $K_n = \sum_{j=0}^{e-2}R_nt^{h_j}$, so that $S_n = R_n[K_n]=R$. 

Let  $0 \le i,j \le e-1$ and write $h_i + h_j = h_k + qe$ with $0 \le k \le e-1$ and $q \ge 0$. If $k \le e-2$, then $t^{h_i}t^{h_j} = (t^e)^qt^{h_k} \in K_n$, which shows $K_n^2 = K_n + R_nt^{h_{e-1}}$ (remember that $h_{e-1}= h_1 + h_{e-2}$). Hence $K_n^3 = K_n^2 + \sum_{i=1}^{e-2}R_n{\cdot}t^{h_{e-1}+h_i}$. If $1 \le i \le e-2$ and $t^{h_{e-1}+h_i} \not\in K_n$, then $t^{h_{e-1}+h_i} \in R_n t^{h_{e-1}} \subseteq K_n^2$ as we have shown above. Hence $K_n^2 = K_n^3$, which proves Assertion (1) of Theorem \ref{7.10}.

Because $S_n = R$, we have $\m_n R = t^eR$, so that $R = \sum_{j=0}^{e-1}R_nt^{h_j}$. Now notice that by Claim \ref{7.11} (1) $$(h_i + ne) + h_j = (h_i+ h_j) + ne \in H_n$$ for all $0 \le i,j \le e-1$ and we get $t^{h_i+ne} \in \fkc_n$, whence $t^{ne}R_n +(t^{h_i + ne} \mid 1 \le i \le e-1)R_n \subseteq \fkc_n$, while $(n-1)e + h_j \not\in H_n$ for all $1 \le j \le e-1$,  so that $t^{(n-1)e} \not\in \fkc_n$. Thus $$\fkc_n  =t^{ne}R_n + (t^{h_i + ne} \mid 1 \le i \le e-1)R_n = (t^{h_i + ne} \mid 0 \le i \le e-1)R_n$$ and hence $\ell_{R_n}(R_n/\fkc_n) =  n$. Therefore,  $\ell_{R_n}(K_n/R_n)= n$ by Proposition \ref{2.4} (1), which proves Assertion (2) of Theorem \ref{7.10}.

To prove Assertion (3) of Theorem \ref{7.10}, it suffices by Assertion (2) that $\ell_{R_n}(K_n/R_n) = n(e-2)$, because $\fkc_n = R_n : K_n$ by Proposition \ref{2.4} (2) and $\mu_{R_n}(K_n/R_n) = e-2$ (notice that $\rmr(R_n) = e-1$ by Claim \ref{7.11} (2)). We set $L_q = \m_n^q  K_n + R_n$. We then have by induction on $q$ that $L_q = R_n + \sum_{j=1}^{e-2}R_nt^{h_j + qe}$ for all  $0 \le q \le n$. In fact, let $0 \le q < n$ and assume that our assertion holds true for $q$. Then since $L_{q+1} = \m_n L_q + R_n$, we get $L_{q+1} = R_n + \m_n \left[\sum_{j=1}^{e-2}R_nt^{h_j + qe}\right]$. Therefore, $L_{q+1} = R_n + \sum_{j=1}^{e-2}R_nt^{h_j + (q+1)e}$, because for all $1 \le i \le e-1$ and $1 \le j \le e-2$
$$(h_i + ne) + (h_j + qe) = \left[(h_i + h_j) + ne\right] + qe \in H_n$$
by Claim \ref{7.11} (1).  
Hence we obtain a filtration
$$K_n =L_0 \supseteq L_1 \supseteq \cdots \supseteq L_n = R_n,$$
where $L_{q}= L_{q+1} + \sum_{j=1}^{e-2}R_nt^{h_j + qe}$ and $\m_n{\cdot}(L_q/L_{q+1}) = (0)$ for $0 \le q < n$. Consequently, to see that $\ell_{R_n}(K_n/R_n) = n(e-2)$, it is enough to show the following.

\begin{claim}\label{7.12}
$\ell_{k}(L_q/L_{q+1}) = e-2$ for all $0 \le q < n$.
\end{claim}

\begin{proof}[{Proof of Claim $\ref{7.12}$}]
Let $0 \le q < n$ and let $\{c_j\}_{1 \le j \le e-2}$ be elements of the field $k$ such that $\sum_{j=1}^{e-2}c_jt^{h_j + qe}\in L_{q+1}$. Suppose $c_j \ne 0$ for some $1 \le j \le e-2$. Then $t^{h_j+qe} \in L_{q+1}$. Hence  $h_j + qe \in H_n$ or $(h_j + qe)-(h_m + (q+1)e) \in H_n$  for some $1 \le m \le e-2.$ We get $h_j + qe \not\in H_n$, since $h_j + (n-1)e \not\in H_n$. On the other hand, if $$(h_j + ne)-(h_m + ne + e) =  (h_j + qe) - (h_m + (q+1)e) \in H_n,$$ then $1 \le m < j \le e-2$. Let us  write
$$(h_j + ne) - (h_m + ne + e)= \alpha_0 e + \alpha_1 (h_1 + ne) + \cdots + \alpha_{e-1} (h_{e-1}+ ne)$$
with integers $0 \le \alpha_p \in \Bbb Z$. Then  $\alpha_j = 0$ since $ (h_j + ne) - (h_m + ne + e) < h_j + ne$, so that
$$h_j + ne =  (\alpha_0 + 1)e + \alpha_1 (h_1 + ne) + \cdots + \overset{\vee}{\alpha_j (h_j + ne)} +  \cdots + \cdots + \alpha_{e-1}(h_{e-1} + ne),$$  
which violates the fact that $\{e\} \cup \{h_i + ne\}_{1 \le i \le e-1}$ is a minimal system of generators of $H_n$. Thus $c_j = 0$ for all $1 \le j \le e-2$, whence $\ell_{k}(L_q/L_{q+1}) = e-2$ as claimed.
\end{proof}

Therefore, $\ell_{R_n}(K_n/R_n) = n(e-2)$, so that $K_n/R_n \cong (R_n/\fkc_n)^{\oplus (e-2)}$ as an $R_n$-module, which completes the proof of Theorem \ref{7.10}.
\end{proof}



\end{document}